\numberwithin{equation}{section}
\theoremstyle{plain}
\newtheorem{thm}[equation]{Theorem}
\newtheorem{prop}[equation]{Proposition}
\newtheorem{lem}[equation]{Lemma}
\newtheorem{cor}[equation]{Corollary}
\theoremstyle{definition}
\newtheorem{conj}[equation]{Conjecture}
\newtheorem{as}[equation]{Assumption}
\newtheorem{ww}[equation]{}
\theoremstyle{remark}
\newtheorem*{rem*}{Remark}
\DeclareMathOperator{\Br}{Br}
\DeclareMathOperator{\Char}{char}
\DeclareMathOperator{\disc}{disc}
\DeclareMathOperator{\Id}{Id}
\DeclareMathOperator{\Frac}{Frac}
\DeclareMathOperator{\Herm}{Herm}
\DeclareMathOperator{\End}{End}
\DeclareMathOperator{\Int}{Int}
\DeclareMathOperator{\Per}{per}
\DeclareMathOperator{\Ind}{ind}
\DeclareMathOperator{\Op}{op}
\DeclareMathOperator{\Rad}{Rad}
\DeclareMathOperator{\Rank}{Rank}
\DeclareMathOperator{\Spec}{Spec}
\DeclareMathOperator{\SU}{SU}
\DeclareMathOperator{\SO}{SO}
\DeclareMathOperator{\U}{U}
\DeclareMathOperator{\Nrd}{Nrd}
\DeclareMathOperator{\PGL}{PGL}
\DeclareMathOperator{\Proj}{Proj}
\DeclareMathOperator{\ad}{ad} 
\DeclareMathOperator{\rdim}{rdim}
\newcommand{\lBr}{{_l}\mathrm{Br}}
\title{Hasse principle for hermitian spaces over semi-global fields}
\author{Zhengyao Wu}
\address{
Zhengyao Wu\\
Dept. of Mathematics and Computer Science\\
Emory University\\
400 Dowman Drive, W401\\
Atlanta, GA 30322}
\email{zwu22@mathcs.emory.edu}
\begin{document}
\subjclass[2010]{11E39, 11E72, 14G05, 20G35}
\keywords{hermitian; hasse principle; local-global principle; isotropic; patching; projective homogeneous; twisted flag variety}
\begin{abstract} 
In a recent paper, Colliot-Th\'el\`ene, Parimala and Suresh conjectured that a local-global principle holds for projective homogeneous spaces under connected linear algebraic groups over function fields of p-adic curves. 
In this paper, we show that the conjecture is true for any linear algebraic group whose almost simple factors
of its semisimple part are isogenous to unitary groups or special unitary groups of hermitian or skew-hermitian spaces over central simple algebras with involutions. 
The proof implements patching techniques of Harbater, Hartmann and Krashen. 
As an application, we obtain a Springer-type theorem for isotropy of hermitian spaces over 
odd degree extensions of function fields of p-adic curves. 
\end{abstract}
\maketitle
\setcounter{tocdepth}{1} 
\tableofcontents
\section{Introduction}\label{sec1}
Let $T$ be a complete discrete valuation ring with residue field $k$. Let $K$ be the field of fractions of $T$. 
Let $F$ be the function field of a smooth, projective, geometrically integral curve $\mathscr X_0$ over $K$. 
Recently, such a field $F$ has been called a \textit{semi-global} field.
Let $\Omega$ be the set of all rank one discrete valuations on $F$ (or the set of all divisorial discrete valuations from all codimension one points of all regular projective models $\mathscr X\to\Spec(T)$ of the curve $\mathscr X_0$). 
For each $v\in \Omega$, let $F_v$ be the completion of $F$ at $v$. 
Let $G$ be a connected linear algebraic group over $F$ and let $X$ be a projective homogeneous space 
 under $G$ over $F$. 
We say that the \textit{Hasse principle holds} for $X$ if
\[\prod\limits_{v\in\Omega}X(F_v)\ne\emptyset \implies X(F)\ne\emptyset. \]

Colliot-Th\'el\`ene, Parimala and Suresh \cite[3.1]{CTPS} have proved that if $\Char k\ne 2$ and $q:V\to F$ is a quadratic form with $\dim_F(V)\ge 3$, then the Hasse principle holds for every projective homogeneous space under $\SO(q)$. 
Reddy and Suresh \cite[2.6]{RS} have proved that if $A$ is a central simple $F$-algebra of degree coprime to $\Char k$, then the Hasse principle holds for every projective homogeneous space under $\PGL_1(A)$. 
After \cite[3.1]{CTOP} and \cite[5.7]{CTGP}, Harbater, Hartmann and Krashen \cite[9.2]{HHK2} have proved that if $k$ is algebraically closed and $\Char k=0$, then the Hasse principle holds for projective homogeneous spaces under connected \textit{rational} groups. 

In this article we explore the Hasse principle for projective homogeneous spaces under $G$ over $F$ for  
certain groups of classical types.

\begin{as}\label{as1}
Suppose $K$, $k$, $F$, $\Omega$, $G$, $X$ are as before and $\Char k\ne 2$.  
Let $A$ be a finite-dimensional simple associative $F$-algebra with an involution $\sigma$ such that $F=Z(A)^{\sigma}$.  
Let $h: V\times V\to A$ be an $\varepsilon$-hermitian space over $(A, \sigma)$ for $\varepsilon\in\{1, -1\}$. 
Let 
\[G=\left\{
\begin{array}{ll}
\SU(A,\sigma, h), & \text{if } \sigma \text{ is of the first kind;} \\
\U(A, \sigma, h), & \text{if } \sigma \text{ is of the second kind.}\\
\end{array}
\right.\]
%
\end{as}

In \cref{sec7} and \cref{sec9}, we will prove the following Hasse principle for discrete valuations based on the Hasse principle \cite[3.7]{HHK1} with respect to  patching. 

\begin{thm}[Main]\label{main-thm}
Under \cref{as1}, suppose that at least one of the following is satisfied. 
 
(1) $\Ind(A)\le 2$;

(2) $\Per(A)=2$, $|l^*/{l^*}^2|\le 2$ and ${_2}\Br(l)=0$ for all finite extensions $l/k$.  

Then the Hasse principle holds for $X$. 
\end{thm}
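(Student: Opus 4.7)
The plan is to transform the Hasse principle for $X$ into a Hasse principle for isotropic flags in $(V,h)$, then deduce it from the patching local-global principle \cite[3.7]{HHK1} after bridging the gap between completions at divisorial valuations and the patching overfields. First I would use the classification of projective homogeneous spaces under classical groups to identify $X$ with a twisted variety of isotropic $A$-submodules of $V$ of prescribed reduced dimensions; over any extension $L/F$, the $L$-points of $X$ correspond to isotropic flags of that type in $(V,h)\otimes_F L$. The hypothesis $\prod_{v\in\Omega}X(F_v)\ne\emptyset$ thus provides such isotropic flags over every $F_v$.

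Next, I would fix a regular projective model $\mathscr X\to\Spec(T)$ of $\mathscr X_0$ over which $A$, $\sigma$ and $h$ extend, and appeal to the patching framework of Harbater--Hartmann--Krashen. The patching Hasse principle \cite[3.7]{HHK1} says that an isotropic flag exists over $F$ as soon as one exists over each patching overfield $F_P$ (at closed points), $F_\wp$ (at branches), and $F_U$ (at open patches) for a suitable cover of $\mathscr X$. The overfields $F_P$ and $F_\wp$ are dominated by completions of $F$ at divisorial valuations on a suitable blow-up, so isotropy over all $F_v$ gives isotropy over each $F_P$ and $F_\wp$ with no additional work.

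The main obstacle is producing an isotropic flag over each $F_U$ from the local data at the branches of $U$. This is where the two hypotheses enter. Under~(1), $\Ind(A)\le 2$, I would reduce via Morita equivalence and trace forms to the isotropy of a quadratic or hermitian form of small reduced dimension over the centre of $A$, whose behaviour over $F_U$ is controlled by existing patching results such as \cite[3.1]{CTPS} for $\SO(q)$. Under~(2), $\Per(A)=2$ together with $|l^*/{l^*}^2|\le 2$ and ${}_2\Br(l)=0$ for every finite extension $l/k$ pins down the invariants of $(V,h)\otimes F_U$ (discriminant, Clifford invariant, and any residual Brauer class) in terms of their restrictions to the branches of $U$; isotropy at those branches then lifts to an isotropy over $F_U$.

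The step I expect to be the main obstacle is case~(2): one has to show that hermitian invariants over a patch $F_U$ are recovered from their images at branches, leveraging precisely the residue hypotheses on $k$. I would dispatch case~(1) first in \cref{sec7} as a warm-up using Morita reduction, and then in \cref{sec9} use the structural simplifications it suggests together with the residual Brauer vanishing to handle case~(2); combining both with \cite[3.7]{HHK1} closes the argument.
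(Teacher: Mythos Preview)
Your overall framework---reduce $X$ to a flag variety of isotropic submodules and apply the patching principle \cite[3.7]{HHK1}---matches the paper. But you have the difficulty inverted, and your plan for case~(2) is off target.

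The fields $F_U$ are the easy ones: for each codimension-zero point $\eta$ of the special fibre, $F_\eta$ is a completion at a divisorial valuation, so $X(F_\eta)\ne\emptyset$ by hypothesis, and one descends to $X(F_{U_\eta})\ne\emptyset$ for a small enough open $U_\eta$ via \cite[5.8]{HHK2}. By contrast, $F_P$ is the fraction field of the completed two-dimensional local ring $\widehat{R_P}$ at a closed point; it is \emph{not} dominated by any single divisorial completion, and getting $X(F_P)\ne\emptyset$ is where almost all the work lies. The paper's \cref{sec7} builds, for each such $P$, an explicit maximal order $\Lambda\subset D$ with $\sigma$-invariant parameters $\pi_D,\delta_D$ (\cref{parameters}, \cref{parameters2}), diagonalizes $h$ accordingly (\cref{decomp2}), and then uses Larmour's Springer-type theorem (\cref{Lamour}) to show that isotropy over $(F_P)_{\pi_P}$---which \emph{is} a divisorial completion after blowing up---forces isotropy over $F_P$ (\cref{local-Z}, \cref{local-Z2}, \cref{local-Z3}). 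Your proposal skips this entire mechanism.

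Your treatment of case~(2) is a genuine gap. The residual hypotheses $|l^*/{l^*}^2|\le 2$ and ${_2}\Br(l)=0$ are not used to ``pin down hermitian invariants over $F_U$ from their restrictions to branches.'' They are used, via \cite[6.2]{AG} and Saltman's classification (\cref{class2}), to force ${_2}\Br(\widehat{R_P})=0$ and hence $\Ind(A\otimes F_P)\le 2$ at every closed point $P$. In other words, case~(2) is reduced \emph{pointwise} to case~(1), and then the same local machinery of \cref{sec7} applies. There is no separate invariant-theoretic argument for hermitian forms over patches; without the index bound at closed points you would not be able to invoke the quaternion-specific order constructions that drive the proof.
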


\begin{rem*}
In case (1), the underlying division algebra of $A$ is $F$, or a quadratic field extension of $F$, or a quaternion division algebra with center $F$, or a quaternion division algebra whose center is a quadratic extension of $F$. 

In case (2), if $\sigma$ is of the first kind, then $\Per(A)=2$ since $A\simeq A^{\Op}$; 
if $\sigma$ is of the second kind, in general we do not have $\Per(A)=2$. 
By \cite[XIII, \S 2]{Ser}, examples of such $k$ in (2) are finite fields or fields of Laurent series with coefficients in an algebraically closed field of characteristic $0$, for example $\mathbb C((t))$. 
\end{rem*}
%

Colliot-Th\'el\`ene, Parimala and Suresh  \cite[conj.~1]{CTPS} made the following

\begin{conj} Let $K$ be a $p$-adic field and $F$ a function field of a curve over $K$.
 Let $G$ be a  connected linear algebraic group over $F$ and let $X$ be a projective homogeneous space under $G$ over $F$. Then the Hasse principle holds for $X$.  
\end{conj}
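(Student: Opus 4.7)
The plan is to reduce the full conjecture to the case of an absolutely simple simply connected $F$-group and then to proceed by case analysis on Dynkin type, using \cref{main-thm} as the classical-groups input and cohomological Hasse principles plus HHK patching for the remaining types.

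\textbf{Standard reductions.} Because $X$ is projective and $R_u(G)$ is $F$-split ($\Char F=0$), replacing $G$ by $G/R_u(G)$ identifies $X$ with a projective homogeneous space under a reductive group without changing $X(F)$. The center of $G$ acts trivially on $X$ and the simply connected cover of $G^{\mathrm{der}}$ shares all projective homogeneous spaces, so one may take $G$ semisimple simply connected. Writing $G=\prod R_{L_i/F}(G_i)$ with each $G_i$ absolutely simple simply connected over a finite extension $L_i/F$, the variety $X$ decomposes as a product $\prod X_i$, and each $L_i$ is again a function field of a curve over a finite (hence $p$-adic) extension of $K$; so one reduces to $G$ absolutely simple simply connected over a semi-global field. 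In this setting $X=G/P_\theta$ for a type $\theta\subseteq\Delta/\Gal$ of parabolic, and the Hasse principle for $X$ is equivalent to a Hasse principle for Tits indices: $G_{F_v}$ having a $\theta$-parabolic for every $v\in\Omega$ should force $G$ to have one over $F$.

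\textbf{Case analysis.} For inner and outer types $A_n$, $B_n$, $C_n$, $D_n$, the variety $X$ is a variety of isotropic right ideals in a central simple algebra with involution, so Tits-index control reduces to isotropy of $\varepsilon$-hermitian forms over $(A,\sigma)$; this is the situation addressed by \cref{main-thm}. To remove the hypotheses $\Ind(A)\le 2$ or smallness of $k$ one has to push the patching Hasse principle \cite[3.7]{HHK1} through the entire Bruhat--Tits stratification of $\Iso(h)$, combining period-index bounds over semi-global $F$ with descent on hyperbolic summands. For trialitarian $D_4$ and the exceptional types $G_2, F_4, E_6, E_7, E_8$, one reduces $X$ along the parabolic lattice to a cominuscule homogeneous variety and uses the classification of such groups by their Rost invariant in $H^3(F,\mathbb Q/\mathbb Z(2))$ together with the higher cohomological invariants in $H^4, H^5$ that appear for $F_4$, $E_7$, and $E_8$. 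The Hasse principle for $X$ becomes a Hasse principle for these invariants over $F$, which in degree $3$ is available through Kato-type results of Parimala--Suresh over function fields of $p$-adic curves.

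\textbf{Main obstacle.} The residual difficulty is threefold. First, one must remove the period/index and residue-field hypotheses of \cref{main-thm} to cover all classical groups; this requires strengthening the patching Hasse principle beyond the regime of \cite[3.7]{HHK1}. Second, type $E_7$ forces a Hasse principle for $H^4(F,\mathbb Q/\mathbb Z(3))$ over a function field of a $p$-adic curve, which is not yet known in the generality required. Third, and most serious, is $E_8$, where no complete cohomological classification of torsors exists: one would have to combine patching over a regular model with Chernousov--Merkurjev style motivic decompositions of twisted flag varieties, or await new invariants. I expect this last point, together with the $H^4$ Hasse principle for $E_7$, to be the principal obstruction to a proof of the full conjecture by the approach outlined here.
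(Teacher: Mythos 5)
This statement is not proved in the paper at all: it is Conjecture~1 of Colliot-Th\'el\`ene, Parimala and Suresh, quoted verbatim, and the paper only establishes special cases of it, namely \cref{main-thm} (unitary and special unitary groups of hermitian spaces under the hypotheses $\Ind(A)\le 2$ or the period/residue-field conditions) and \cref{main-cor} (groups whose semisimple quotient is isogenous to a product of almost simple groups of types ${}^1A_n$, ${}^2A_n^*$, $B_n$, $C_n$, nontrialitarian $D_n$). So there is no ``paper proof'' of the full conjecture to compare yours against, and your text cannot be accepted as a proof either: by your own account it is a programme, not an argument. Your ``Main obstacle'' paragraph concedes exactly the points at which the argument does not exist --- removing the index-$\le 2$ and period-$2$/small-residue-field hypotheses from the classical cases, a Hasse principle for $H^4(F,\mathbb{Q}/\mathbb{Z}(3))$ needed for $E_7$, and the absence of any cohomological classification adequate for $E_8$ --- and none of these is supplied by the references you invoke. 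A proof that announces its own gaps in the exceptional types and in the general classical case is a reduction of the problem, not a solution of it.

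That said, the reductions in your first paragraph are sound and are essentially the ones the paper itself performs when it proves \cref{cor1*}: passing to $G/\Rad(G)$ via \cite[5.7]{CTGP}, lifting along a central isogeny from a product of almost simple groups using \cite[14.10(2)]{Borel} and \cite[2.20, (i)]{BT}, splitting $X$ as a product of projective homogeneous spaces under the factors via \cite[6.10(e)]{MPW2}, and then feeding the classical factors into \cref{thm1*} (with \cite[2.6]{RS} for ${}^1A_n$ and \cite[3.1]{CTPS} for $B_n$). Where you go beyond the paper --- restriction of scalars to reach absolutely simple groups, reformulation as a Hasse principle for Tits indices, and the sketched cohomological strategy for trialitarian $D_4$ and the exceptional types --- is plausible as a roadmap but carries no proof content at the crucial steps. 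If your goal is to match what is actually established, you should state and prove only the cases covered by \cref{main-thm} and \cref{main-cor}; if your goal is the full conjecture, the three obstructions you list are precisely what is missing, and no amount of patching formalism in the current literature closes them.
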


Summarizing \cite[2.6]{RS}, \cite[3.1]{CTPS} and \cref{main-thm}, we have 
the following:   
\begin{cor}\label{main-cor}
Under \cref{as1}, let $K$ be a $p$-adic field. 
Then  
the conjecture of Colliot-Th\'el\`ene, Parimala and Suresh is true for $X$ under $G$ such that there exists an isogeny from a product of almost simple groups of one of the following types to the semisimple group $G/\Rad(G)$. $${^1}A_n, \quad {^2}A_n^*,\quad  B_n, \quad C_n, \quad D_n~(D_4 \text{ nontrialitarian}),$$ where ${^2}A_n^*$ means that the almost simple factor is isogenous to a unitary group $\U(A, \sigma, h)$ such that $\sigma$ is of the second kind and $\Per(A)=2$. 
\end{cor}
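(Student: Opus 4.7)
The plan is to reduce the Hasse principle for $X$ under $G$ to Hasse principles for projective homogeneous spaces coming from the almost simple factors, and then invoke the appropriate known result for each allowed type.

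First I would descend through the radical and the isogeny. Since $\Rad(G)$ is contained in every parabolic subgroup of $G$, it acts trivially on $X = G/P$, so $X$ is already projective homogeneous under the semisimple quotient $\bar G := G/\Rad(G)$, with the same $F$-points and $F_v$-points. Let $\pi : \tilde G \to \bar G$ be the hypothesized $F$-isogeny, where $\tilde G = \prod_{i=1}^r G_i$ is a product of almost simple $F$-groups of the listed types. Pulling back the parabolic $P/\Rad(G) \subset \bar G$ by $\pi$ produces a parabolic $\tilde P \subset \tilde G$ with $\tilde G/\tilde P \cong X$ as $F$-varieties, so $X$ becomes a projective homogeneous space under $\tilde G$ as well. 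The parabolic subgroups of the $F$-product $\prod G_i$ being products of parabolic subgroups of the factors, one has $\tilde P = \prod_i \tilde P_i$ and $X \cong \prod_{i=1}^r X_i$ over $F$, with each $X_i := G_i/\tilde P_i$ projective homogeneous under $G_i$. Because $X(F) = \prod_i X_i(F)$ and $X(F_v) = \prod_i X_i(F_v)$ for every $v \in \Omega$, it is enough to prove the Hasse principle for every $X_i$.

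Next I would record that, since $K$ is $p$-adic and $\Char k \ne 2$ by \cref{as1}, the residue field $k$ and any finite extension $l/k$ are finite of odd order, whence $|l^*/l^{*2}| = 2$ and ${_2}\Br(l) = 0$. In particular, whenever $\Per(A) = 2$ the technical hypotheses of case (2) of \cref{main-thm} are automatically met. I would then dispatch each factor as follows. For type ${}^1A_n$, $G_i$ is isogenous to $\PGL_1(A_i)$ for a central simple $F$-algebra $A_i$, and the Hasse principle for $X_i$ is \cite[2.6]{RS}. For type ${}^2A_n^*$, by hypothesis $G_i$ is isogenous to $\U(A_i, \sigma_i, h_i)$ with $\sigma_i$ of the second kind and $\Per(A_i) = 2$, so \cref{main-thm}(2) applies. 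For $B_n$, $C_n$ and non-trialitarian $D_n$, $G_i$ is isogenous to $\SU(A_i, \sigma_i, h_i)$ with $\sigma_i$ of the first kind; the existence of an involution gives $A_i \simeq A_i^{\Op}$, so $\Per(A_i)$ divides $2$. If $\Ind(A_i) \le 2$ one applies \cref{main-thm}(1), and otherwise necessarily $\Per(A_i) = 2$ and \cref{main-thm}(2) applies. In the split subcases of $B_n$ and $D_n$ this recovers the orthogonal Hasse principle \cite[3.1]{CTPS}.

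The step that merits the most care, though still essentially formal, is the $F$-rational identification $X \cong \prod_i X_i$: one needs the Galois action on $\tilde G$ to respect the product decomposition so that $\tilde P$ descends to $F$ as a product and each $X_i$ is defined over $F$. This is built into the assumption that the isogeny $\prod G_i \to \bar G$ is given as an $F$-morphism out of an $F$-product; once this is in place, the corollary is a straightforward assembly of \cite[2.6]{RS}, \cite[3.1]{CTPS} and \cref{main-thm}, and no further work is required.
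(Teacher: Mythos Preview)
Your proposal is correct and follows essentially the same route as the paper: pass from $G$ to $G/\Rad(G)$, transfer $X$ along the isogeny to the product $\prod_i G_i$, split $X$ as $\prod_i X_i$, and then handle each almost simple factor by the appropriate known result (\cite[2.6]{RS}, \cite[3.1]{CTPS}, or \cref{main-thm}). The paper's proof is the same argument with precise citations at each step: \cite[5.7]{CTGP} for descent to $G/\Rad(G)$, the observation that isogenies in characteristic~$0$ are central together with \cite[2.20(i)]{BT} for the transfer along $\pi$, \cite[6.10(e)]{MPW2} for the product decomposition $X\cong\prod_i X_i$ over $F$, and \cite[Table~1]{Tits} for the identification of each $G_i$ with a (special) unitary group; your caution about the $F$-rationality of the splitting is exactly what \cite[6.10(e)]{MPW2} supplies.
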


Let $F$ be a field of characteristic not 2. 
Let $q$ be a quadratic form over $F$. 
Springer \cite{Spr1} proved that 
if $q$ is isotropic over an odd degree extension of $F$, then $q$ is isotropic over $F$.
Let $A$ be a central simple algebra over $F$ with an involution $\sigma$.
Let $h: V\times V\to A$ be an $\varepsilon$-hermitian form over $(A, \sigma)$ for $\varepsilon \in\{1,-1\}$.
Let $M$ be an odd degree extension of $F$. It is natural to ask whether the isotropy of 
$h_M$ implies the isotropy of $h$. This question has been studied by many mathematicians and they have obtained partial answers. 
Bayer-Fluckiger and Lenstra \cite{BL} proved that if $h_M$ is hyperbolic, then $h$ is hyperbolic. 
Lewis \cite{Lewis} and Barqu\'ero-Salavert \cite{BS} proved that if $h_1$ and $h_2$ are two $\varepsilon$-hermitian spaces such that $(h_1)_M\simeq (h_2)_M$, then $h_1\simeq h_2$. 
Parimala, Sridharan and Suresh \cite{PSS} proved that it is true if $A$ is a quaternion algebra and $\sigma$ is of the first kind; they also provided an example to show that this is not true in general if $\Ind(A)$ is odd and $\sigma$ of the second kind. 
Let $E=\End_A(V)$ and let $\tau$ be the adjoint involution of $h$. 
Black and Qu\'eguiner-Mathieu \cite{BQ} proved that it is true when $\deg E=12$ and $\tau$ is orthogonal; and it is also true when $\deg E=6$, $\Per E=2$ and $\tau$ is unitary. 
In \cref{sec10}, we will prove the following: 

\begin{thm}\label{odd}
Let $p$ be an odd prime.
Let $K$ be a $p$-adic field. 
Let $F$ be the function field of a curve over $K$.
Let $\Omega$ be the set of all rank one discrete valuations on $F$. 
Let $A$ be a finite-dimensional central simple $F$-algebra with an involution $\sigma$ of the first kind.  
Let $h$ be an $\varepsilon$-hermitian space over $(A, \sigma)$ for $\varepsilon\in\{1, -1\}$. 
Let $M$ be an odd degree extension of $F$. 
If $h_M$ is isotropic, then $h$ is isotropic. 
\end{thm}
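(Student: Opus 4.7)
The plan is to combine the Hasse principle from \cref{main-thm} with a local Springer-type theorem at each completion $F_v$. Let $X$ be the projective $F$-variety of isotropic rank one right ideals in $(A,\sigma,h)$; it is a projective homogeneous space under $G=\SU(A,\sigma,h)$, and for any field extension $L/F$ one has $X(L)\ne\emptyset$ if and only if $h_L$ is isotropic. Since $\sigma$ is of the first kind, $A\simeq A^{\Op}$ and hence $\Per(A)\mid 2$. If $\Per(A)=1$ then $\Ind(A)=1\le 2$ and case (1) of \cref{main-thm} applies. If $\Per(A)=2$, then because $K$ is a $p$-adic field with $p$ odd, the residue field $k$ is finite of odd characteristic, and every finite extension $l/k$ is itself a finite field of odd order, so $|l^*/{l^*}^2|=2$ and $\Br(l)=0$ by Wedderburn; thus case (2) of \cref{main-thm} applies. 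In either case the Hasse principle holds for $X$, and the problem reduces to showing that $h$ is isotropic over $F_v$ for every $v\in\Omega$.

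Fix $v\in\Omega$ and choose a rank one discrete valuation $w$ of $M$ extending $v$. Then $[M_w:F_v]$ divides $[M:F]$ and is therefore odd, and $h_{M_w}$ is isotropic since $h_M$ is. It therefore suffices to prove the following local Springer-type statement: if $\widetilde F$ is a complete discretely valued field of residue characteristic different from $2$, $(\widetilde A,\widetilde\sigma)$ a central simple $\widetilde F$-algebra with involution of the first kind, and $\widetilde h$ an $\varepsilon$-hermitian form over $(\widetilde A,\widetilde\sigma)$ that becomes isotropic over an odd-degree finite extension of $\widetilde F$, then $\widetilde h$ is already isotropic over $\widetilde F$. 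The residue-characteristic hypothesis is satisfied at every $v\in\Omega$ precisely because $p$ is odd: horizontal valuations on any regular proper model of $\mathscr X_0$ give residue characteristic $0$, while vertical valuations give residue characteristic $p$.

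This local statement is the main obstacle. My plan for it is to invoke the residue theory of hermitian forms over complete discretely valued fields: choosing a $\widetilde\sigma$-stable maximal order and a suitable uniformiser, one decomposes $\widetilde h$ up to Morita equivalence into a ``unit'' part and a ``uniformiser'' part whose residues are hermitian forms over the residue division algebra, and $\widetilde h$ is isotropic over $\widetilde F$ if and only if at least one of these residue forms is isotropic over the residue algebra. Induction on the index of $\widetilde A$ then reduces matters either to a quadratic or symplectic form over a field (where classical Springer together with Morita equivalence is enough) or to a hermitian form over a quaternion algebra with involution of the first kind (handled by Parimala, Sridharan and Suresh). Assembling the resulting isotropy at each $F_v$ and invoking the Hasse principle from the first paragraph produces an $F$-point of $X$, i.e.\ a nonzero isotropic vector of $h$ over $F$.
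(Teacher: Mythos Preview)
Your overall architecture---reduce to each $F_v$ via the Hasse principle of \cref{main-thm}, then prove a Springer-type statement locally---is exactly the paper's strategy. Two points need correction.

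First, a minor slip: for an arbitrary extension $w$ of $v$ to $M$, the degree $[M_w:F_v]$ need not divide $[M:F]$ (take $M=\mathbb Q(\sqrt[3]{2})$ at a prime where $x^3-2$ factors as a linear times a quadratic). What is true is $\sum_{w\mid v}[M_w:F_v]=[M:F]$, so \emph{some} $w$ has odd local degree; this is how the paper proceeds (\cref{ready}).

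The substantive gap is your local claim. You assert that over an arbitrary complete discretely valued field $\widetilde F$ with residue characteristic $\neq 2$, Springer holds for $\varepsilon$-hermitian forms over any $(\widetilde A,\widetilde\sigma)$ of the first kind, and you propose to prove this by Larmour-type residues plus ``induction on the index of $\widetilde A$''. This does not work as stated. Larmour's decomposition (\cref{Lamour}) reduces isotropy over $\widetilde F$ to isotropy of residue forms over the residue division algebra $\overline D$ over the residue field $k$, but $k$ is in general \emph{not} complete discretely valued, so you cannot iterate; and $\deg\overline D$ need not be smaller than $\deg D$ (e.g.\ when $D$ is unramified), so there is no index to induct on. Moreover, even though $\sigma$ is of the first kind, the residue involution $\overline\sigma$ on $\overline D$ can be of the second kind when $Z(\overline D)/k$ is quadratic, so you would need Springer for unitary forms over $k$ as well---and that is false over general fields (the counterexample in \cite{PSS} is precisely for second-kind involutions). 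Thus your formulation of the local step is too general, and your proposed proof of it collapses.

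What the paper does instead is to observe that for $v\in\Omega$ the residue field $k(v)$ is either a non-archimedean \emph{local} field or a \emph{global} field (\cite[8.1]{P14}), and then to prove the needed residue-level Springer statements directly in those two cases (\cref{local2} for local fields via PSS and the trace form; \cref{global2} for global fields via Landherr's Hasse principle together with the local case). These feed into the conditional lifting lemma \cref{pre-odd}, which is your Larmour step made precise. Once you replace your ``induction on the index'' by this case analysis on $k(v)$, your argument becomes the paper's.
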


\textbf{Acknowledgements.}

The author thanks his advisor Professor V.~Suresh for thorough detailed instructions and Professor R.~Parimala for helpful discussions. 
The author also thanks Emory university, where he studies mathematics. 

\section{Preliminaries}\label{prelim}
\subsection{Projective homogeneous spaces}\label{sec1.5}
Let $F$ be an arbitrary field, $\Char(F)\ne 2$. 
Let $A$ be a central simple algebra whose center $Z(A)$ is a field extension of $F$. 
Let $\sigma$ be an involution on $A$ such that $Z(A)^{\sigma}=F$. 
Let $V$ be a finitely generated right $A$-module and let $h: V\times V\to A$ be an $\varepsilon$-hermitan form over $(A, \sigma)$ for $\varepsilon\in \{1, -1\}$.  
A submodule $W$ of $V$ is {\it totally isotropic} if $h(x, x) = 0$ for all $x\in W$. 

If 
\[G=G(A,\sigma,h)=\left\{
\begin{array}{ll}
\SU(A,\sigma, h) & \text{if } \sigma \text{ is of the first kind;} \\
\U(A, \sigma, h) & \text{if } \sigma \text{ is of the second kind,}\\
\end{array}
\right.\]
then $G$ is a connected \textit{rational} linear algebraic group over $F$ by Cayley parametrization (see \cite[Lem.~5]{CP} or \cite[p.~195, Lem.~1]{Mer}). 
Let $X$ be a projective homogeneous space under $G$ over $F$. 
In this section, we describe $X$. 

By Wedderburn's theorem,   
$A = M_m(D)$ for a central division algebra $D$ over $F$. 
Let $W$ be a finitely generated right $A$-module, 
Then $W\simeq (D^m)^s$ for an integer $s\ge 0$. 
Then $\dim_F(W) = sm\dim_F(D) = s\deg(A)\Ind(A)$. 
The reduced dimension \cite[1.9]{inv} of $W$ over $A$ is defined to be $\rdim_A(W)=\dim_F(W)/\deg(A)=s\Ind(A)$. 
 
Let $n = \Rank_F(G)$. Then 
$$
\rdim(V)=\left\{
\begin{array}{ll}
n+1, & \text{if }\sigma\text{ is unitary;}\\
2n+1, & \text{if }A=F, \sigma=\Id_F\text{ and }\dim_F(V)\text{ is odd;}\\
2n, & \text{otherwise.}\\
\end{array}
\right.$$

Let $0 < n_1 < \cdots < n_r \le n$ be an increasing sequence of integers. 
For every field extension $L/F$, let 
$$
\begin{array}{rl}
X(n_1, \cdots , n_r)(L) & =~\{(W_1, \cdots, W_r)~|~0\subsetneq W_1\subsetneq \cdots\subsetneq W_r,
 ~W_i \text{ is a totally}  \\
& \text{isotropic subspace of } V_L, ~\rdim_{A_L} W_i=n_i \text{ for all }1\le i\le r \}.
\end{array}
$$
Alternatively, by \cite[6.2]{inv} and \cite[16.4]{Kar}, 
$$
\begin{array}{ll}
& X(n_1, \cdots , n_r)(L)= \{(I_1, \cdots, I_r)~|~0\subsetneq I_1\subsetneq \cdots\subsetneq I_r,
~I_j \text{ is a totally  isotropic }  \\
& \text{ideal of } \End_{A_L}(V_L), ~\rdim_{A_L} I_j=n_j \text{ for all }1\le j\le r \}.
\end{array}
$$
When $r = 1$, we denote $X(n_1)$ by $X_{n_1}$. 
Let $\ad_h$ denote the adjoint involution of $h$ on $\End_A(V)$. 
By \cite{MPW1}, if $\ad_h$ is orthogonal, $\disc(h)=1$, $r=1$ and $n_1=n$, then $X_n$ has two connected components $X_n^+$ and $X_n^-$. 
In this case, for $\varepsilon\in\{+,-\}$, denote
\begin{equation}\label{pm}
X^{\varepsilon}(n_1, \cdots, n_r)(L)
= \{(I_1, \cdots, I_r)\in X(n_1, \cdots, n_r)(L)~|~
I_r\in X_n^{\varepsilon}(L)\},
\end{equation}

By \cite[sec.~5 and sec.~9]{MPW1, MPW2}, the following $X$ is a projective homogeneous space under $G$ and
 any projective homogeneous space under $G$ has this form. 

\begin{equation}\label{phs}
X=
\left\{
\begin{array}{ll}
X(n_1,\cdots, n_r), n_r<\lfloor n/2\rfloor, & \text{ if }\sigma\text{ is unitary};\\
X(n_1,\cdots, n_r), & \text{ if }A=F, \sigma=\Id_F\text{ and }\dim_F(V)\text{ is odd};\\
X(n_1,\cdots, n_r), & \text{ if }\ad_h \text{ is symplectic};\\
X(n_1,\cdots, n_r), n_r<n, & \text{ if }\ad_h \text{ is orthogonal and }\disc(h)\ne 1;\\
\left\{
\begin{array}{l}
X(n_1,\cdots, n_r), n_r<n \text{ or }\\
X^{\pm}(n_1,\cdots, n_r), \\
~n_{r-1}<n-1 (\text{if }r>1), n_r=n\\
\end{array}
\right.,
& \text{ if }\ad_h \text{ is orthogonal and } \disc(h)=1.\\
\end{array}
\right.
\end{equation}


\begin{lem}[{\cite[sec.~5 and sec.~9]{MPW1, MPW2}}]\label{2.1}
	Let $0 < n_1 <  \cdots <  n_r \le n$, $\varepsilon \in \{+, -\}$.  
	Let $L/F$ be a field extension. Then  
	
	(1) $X(n_1, \cdots , n_r)(L) \ne \emptyset$ if and only if $X_{n_r}(L) \ne \emptyset$ and $\Ind(A_L)|\gcd\{n_1, \cdots, n_r\}$. 
	
	(2) $X^{\varepsilon}(n_1, \cdots , n_r)(L) \ne \emptyset$ if and only if $X_{n_r}^{\varepsilon}(L) \ne \emptyset$ and $\Ind(A_L)|\gcd\{n_1, \cdots, n_r\}$. 
\end{lem}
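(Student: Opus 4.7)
The plan is to reduce both parts to simple Wedderburn/module-theoretic facts together with an observation about the definition of the two components $X_n^{\pm}$. No deep geometric input should be needed, and in particular neither direction requires Galois cohomology.

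For the forward direction of (1) and (2), suppose $(W_1 \subsetneq \cdots \subsetneq W_r) \in X(n_1, \ldots, n_r)(L)$ (resp.\ in $X^{\varepsilon}(n_1, \ldots, n_r)(L)$ for (2)). Then $W_r$ is a totally isotropic $A_L$-submodule of reduced dimension $n_r$, hence defines a point of $X_{n_r}(L)$ (resp.\ of $X_{n_r}^{\varepsilon}(L)$ by the definition \eqref{pm}, which specifies membership in $X_n^{\varepsilon}$ purely through the top term of the flag). By Wedderburn, $A_L \cong M_m(D_L)$ with $D_L$ a central division $L$-algebra of index $\Ind(A_L)$; every right $A_L$-module is a direct sum of copies of the unique simple right $A_L$-module $S$, which has $\rdim_{A_L}(S) = \Ind(A_L)$ by the computation of reduced dimension given in the preliminaries. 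Therefore each $n_i = \rdim_{A_L}(W_i)$ is divisible by $\Ind(A_L)$, and the divisibility condition follows.

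For the reverse direction of (1): assume $W \in X_{n_r}(L)$ and $\Ind(A_L) \mid \gcd\{n_1, \ldots, n_r\}$. Set $s_i = n_i/\Ind(A_L)$, so $0 < s_1 < \cdots < s_r$ and $W \cong S^{s_r}$. Choose any chain of right $A_L$-submodules
\[
W_1 \subsetneq W_2 \subsetneq \cdots \subsetneq W_r = W
\]
with $W_i \cong S^{s_i}$; such a chain exists because $W$ is semisimple. Each $W_i \subseteq W$ inherits total isotropy from $W$, and $\rdim_{A_L}(W_i) = s_i \Ind(A_L) = n_i$, producing the required flag. For (2), apply the same construction starting from a point $W \in X_{n_r}^{\varepsilon}(L)$; since the condition defining $X^{\varepsilon}(n_1, \ldots, n_r)$ only restricts the top element $W_r = W$, and $W$ is fixed in the prescribed component by hypothesis, the resulting flag lies in $X^{\varepsilon}(n_1, \ldots, n_r)(L)$.

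The only genuinely subtle point is the symmetry between (1) and (2): one must check that nothing about the truncation $W_r \mapsto (W_1, \ldots, W_r)$ or its inverse can change which component $X_n^{\pm}$ the endpoint lies in. This is immediate from \eqref{pm}, and so the argument for (2) is word-for-word that for (1) with the superscript $\varepsilon$ carried along.
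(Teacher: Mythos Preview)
Your argument is correct. The paper itself gives no proof of this lemma: it is stated with an inline citation to \cite[sec.~5 and sec.~9]{MPW1, MPW2} and treated as a black box from the Merkurjev--Panin--Wadsworth index-reduction papers. What you have supplied is an explicit, self-contained verification using only Wedderburn's theorem and the definition of reduced dimension, which is more elementary than invoking the general structure theory of twisted flag varieties developed in those references. The only point worth noting is that the paper phrases the flag alternatively in terms of isotropic right ideals of $\End_{A_L}(V_L)$ rather than submodules of $V_L$; your choice to work with submodules is equivalent by \cite[6.2]{inv}, and arguably cleaner for this statement.
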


%

\subsection{Morita equivalence}

Since $A = M_m(D)$ for a central division algebra $D$ over $F$ and $\sigma$ is an involution on $A$, by \cite[3.1, 3.11, 3.20]{inv}, $D$ also has an involution $\tau$ of same kind as $\sigma$. 
By Morita equivalence \cite[ch.~I, 9.3.5]{Knus}, there exists an $\varepsilon' $-hermitian form $(V_0, h_0)$ over $(D, \tau)$ for $\varepsilon' \in \{ 1, -1 \}$. 
By the definition of the reduced dimension we have 
$\rdim_A(V) = \rdim_D(V_0)$. 
For $0< n_1< \cdots < n_r\le n$, let $X$ be the 
projective homogeneous space under $G(A,\sigma,h)$ and $X_0$ be the projective homogeneous space under $G(D,\tau,h_0)$. 


\begin{lem}[{\cite[16.10]{Kar}}]\label{2.2}
$X(n_1, \cdots , n_r)\simeq X_0(n_1, \cdots , n_r)$.  
\end{lem}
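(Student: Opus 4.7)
First I would use the alternative, endomorphism-algebra description of $X(n_1,\dots,n_r)$ recalled just before the statement, and transport it via the Morita equivalence between $(A,\sigma)$-hermitian spaces and $(D,\tau)$-hermitian spaces. For each field extension $L/F$, extending scalars gives $A_L=M_m(D_L)$, and the Morita equivalence sends $(V_L,h_L)$ to $((V_0)_L,(h_0)_L)$.

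I would then observe that this induces an $L$-algebra isomorphism
\[\varphi_L\colon\End_{A_L}(V_L)\xrightarrow{\sim}\End_{D_L}((V_0)_L)\]
which is compatible with the adjoint involutions $\ad_{h_L}$ and $\ad_{(h_0)_L}$. Under such an isomorphism, right ideals correspond bijectively to right ideals, strict inclusions are preserved, the property of being totally isotropic---in the sense of the alternative description involving the endomorphism algebra---is preserved (since it is defined purely in terms of the algebra-with-involution structure), and the reduced dimension of a right ideal is an invariant of the abstract central simple $L$-algebra and is therefore preserved as well.

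Consequently the assignment $(I_1,\dots,I_r)\mapsto(\varphi_L(I_1),\dots,\varphi_L(I_r))$ defines a bijection $X(n_1,\dots,n_r)(L)\to X_0(n_1,\dots,n_r)(L)$. Since the Morita equivalence is defined over $F$ and commutes with arbitrary scalar extension, this bijection is functorial in $L$ and assembles into the required isomorphism of $F$-schemes.

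The main obstacle---and in fact the only nontrivial input---is the compatibility $\varphi_L\circ\ad_{h_L}=\ad_{(h_0)_L}\circ\varphi_L$ of adjoint involutions under Morita equivalence. This is the hermitian version of Morita equivalence, and is standard: it is the content of \cite[ch.~I, 9.3.5]{Knus} and is precisely what is packaged into the cited \cite[16.10]{Kar}. Once this compatibility is granted, the remainder of the argument is formal.
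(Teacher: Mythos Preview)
Your argument is correct and is essentially the standard proof behind the cited result: the hermitian Morita equivalence yields an isomorphism of algebras with involution $(\End_A(V),\ad_h)\simeq(\End_D(V_0),\ad_{h_0})$ over $F$, and since $X(n_1,\dots,n_r)$ is constructed functorially from this data via the ideal description, the scheme isomorphism follows. The paper itself does not supply a proof but simply cites \cite[16.10]{Kar}, and then remarks that for its applications only the weaker statement $X(n_1,\dots,n_r)(L)\ne\emptyset\iff X_0(n_1,\dots,n_r)(L)\ne\emptyset$ is needed, which already follows from the fact that Morita equivalence preserves isotropy and reduced dimension---so your more detailed treatment of the full scheme isomorphism goes slightly beyond what the paper actually uses.
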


Actually, we only need $X(n_1, \cdots , n_r)(L)\ne\emptyset\iff X_0(n_1, \cdots , n_r)(L)\ne\emptyset$. 
This is true since Morita equivalence preserves isotropy \cite[ch.~I, 9.3.5]{Knus} and it preserves reduced dimension. 


\begin{lem}\label{2.4}
Suppose $\rdim(V)=2n$, $\ad_h$ is orthogonal, $\disc(h)=1$, $n_{r-1}<n-1$ (if $r>1$) and $n_r=n$. 
If $\Ind(A_L)|\gcd\{n_1, \cdots, n_r\}$, then $X^{\varepsilon}(n_1, \cdots , n_r)(L)\ne\emptyset$ if and only if $X_0^{\varepsilon}(n_1, \cdots , n_r)(L)\ne\emptyset$, for $\varepsilon\in\{+,-\}$. 
\end{lem}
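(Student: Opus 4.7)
The plan is in three steps: reduce to the case $r=1$, $n_1=n$ via \cref{2.1}(2); apply the Morita isomorphism of \cref{2.2}; and verify that this isomorphism respects the two connected components arising in the $\disc(h)=1$ orthogonal case.

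For the reduction, note first that $A$ and $D$ are Brauer equivalent, so $\Ind(A_L)=\Ind(D_L)$, and the divisibility hypothesis on $\gcd\{n_1,\ldots,n_r\}$ is the same on both sides. Moreover, Morita equivalence preserves reduced dimension and the type of the adjoint involution, so $(D,\tau,h_0)$ still falls under the last case of \eqref{phs}. Applying \cref{2.1}(2) to $(A,\sigma,h)$ and to $(D,\tau,h_0)$, the claim reduces to
\[X_n^{\varepsilon}(L)\ne\emptyset\iff X_{0,n}^{\varepsilon}(L)\ne\emptyset.\]

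The Morita equivalence provides an isomorphism of $F$-algebras with involution $(\End_{A_L}(V_L),\ad_{h_L})\simeq(\End_{D_L}((V_0)_L),\ad_{(h_0)_L})$ which sends totally isotropic right ideals to totally isotropic right ideals and preserves reduced dimension; this is exactly the isomorphism $X_n\simeq X_{0,n}$ of \cref{2.2}. Following \cite[sec.~9]{MPW1,MPW2}, the two components $X_n^+$ and $X_n^-$ are characterized as the equivalence classes of maximal totally isotropic ideals under the relation $I\sim I'$ iff $\rdim_{A_L}(I\cap I')\equiv n\pmod 2$ (equivalently, by whether they lie in the same $G$-orbit, which corresponds to the two ends of the Dynkin diagram of type $D_n$). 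Since Morita preserves both intersections and reduced dimensions, the bijection respects this equivalence relation, hence induces a bijection between the partitions $X_n=X_n^+\sqcup X_n^-$ and $X_{0,n}=X_{0,n}^+\sqcup X_{0,n}^-$. Fixing the convention that the $\pm$-labels on $X_0$ are transported via Morita, one obtains $X_n^{\varepsilon}\simeq X_{0,n}^{\varepsilon}$, and the nonemptiness of $L$-points carries over.

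The main subtlety is the consistency of the $\pm$-labeling on the two sides, since the abstract component decomposition is canonical only up to swapping $+$ and $-$. This is not a geometric obstruction but a matter of convention, resolved by declaring the labels on $X_0$ to be the ones that make the Morita isomorphism match $X_n^{\varepsilon}$ with $X_{0,n}^{\varepsilon}$. Beyond this bookkeeping, no new input is required: the preservation of isotropy, intersection, and reduced dimension under Morita equivalence does all the work.
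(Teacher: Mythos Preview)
Your proof is correct and follows essentially the same route as the paper: reduce via \cref{2.1}(2) to the statement $X_n^{\varepsilon}(L)\ne\emptyset\iff (X_0)_n^{\varepsilon}(L)\ne\emptyset$, then use the Morita isomorphism of algebras with involution. The only difference is in how the component compatibility is justified: the paper simply cites the definition of $X_n^{\pm}$ in \cite[p.~577, 5.41, 5.42]{MPW1}, which is intrinsic to the algebra with involution $(\End_A(V),\ad_h)$ and hence automatically preserved under the Morita isomorphism, whereas you spell out the equivalence relation via parity of $\rdim(I\cap I')$ and then handle the labeling by convention. Your argument is a bit more explicit, but the appeal to an intrinsic definition makes the paper's version cleaner and sidesteps the ``which component is $+$'' bookkeeping you flag at the end.
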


\begin{proof}
	By \cref{2.1} and \cref{2.2}, it suffices to show that for $\varepsilon\in\{+,-\}$,  $$X_n^{\varepsilon}(L)\ne\emptyset\iff(X_0)_n^{\varepsilon}(L)\ne\emptyset.$$
	This is true by the definition of $X^{\varepsilon}_n$ (see the paragraph at \cite[p.577, 5.41, 5.42]{MPW1}).  
\end{proof}

\begin{lem}\label{pmkey}
Suppose $\rdim(V)=2n$, $\ad_h$ is orthogonal, $\disc(h)=1$, $n_{r-1}<n-1$ (if $r>1$) and $n_r=n$. 
Let $X^{\varepsilon}=X^{\varepsilon}(n_1, \cdots, n_r)$ for $\varepsilon\in\{+,-\}$. 
Then 
$X^+(L)\ne\emptyset$ and $X^-(L)\ne\emptyset$ if and only if $A_L$ is split and $h_L$ is hyperbolic. 
\end{lem}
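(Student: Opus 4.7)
The plan is to prove the two directions separately: the easier ``$\Leftarrow$'' via Morita reduction to the classical quadratic case, and the harder ``$\Rightarrow$'' via the Clifford algebra of the adjoint orthogonal involution.

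For ``$\Leftarrow$'', assume $A_L$ is split and $h_L$ is hyperbolic. By Morita equivalence (\cref{2.2} and \cref{2.4}) we may replace $(V_L,h_L)$ with a $2n$-dimensional hyperbolic quadratic form $q$ over $L$ with $\disc q=1$. In a hyperbolic basis $e_1,\dots,e_n,f_1,\dots,f_n$, the Lagrangians $\langle e_1,\dots,e_n\rangle$ and $\langle e_1,\dots,e_{n-1},f_n\rangle$ meet in dimension $n-1$, and since the parity of $\dim(W\cap W')$ relative to $n$ distinguishes the two families of maximal totally singular subspaces, they lie in different components $X_n^{\pm}(L)$. Because $\Ind(A_L)=1$ divides every $n_i$, \cref{2.1}(2) upgrades this to $X^+(L)\ne\emptyset$ and $X^-(L)\ne\emptyset$.

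For ``$\Rightarrow$'', \cref{2.1}(2) says both $X_n^+(L)$ and $X_n^-(L)$ are non-empty. In particular $X_n(L)\ne\emptyset$, so $V_L$ admits a totally isotropic $A_L$-submodule of reduced dimension $n=\rdim_{A_L}(V_L)/2$. Via the dictionary of \cref{sec1.5}, $E_L:=\End_{A_L}(V_L)$ then contains a totally isotropic right ideal of reduced dimension equal to half its degree, so $(E_L,\ad_{h_L})$ is a hyperbolic orthogonal involution; by Morita, $h_L$ itself is hyperbolic.

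It remains to show $A_L$ is split. Since $\disc(\ad_{h_L})=\disc(h_L)=1$, the Clifford algebra of $(E_L,\ad_{h_L})$ decomposes as $C^+\times C^-$ with $C^\pm$ central simple over $L$ (see \cite[8.10, 9.12]{inv}). The varieties $X_n^+$ and $X_n^-$ are the projective homogeneous varieties of $\mathrm{Spin}(E_L,\ad_{h_L})$ attached to the two half-spin end-vertices of the Dynkin diagram $D_n$, whose Tits algebras are precisely $C^+$ and $C^-$; by Tits' criterion for rational points on projective homogeneous varieties, $X_n^{\pm}(L)\ne\emptyset$ forces both $C^\pm$ to split over $L$. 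The fundamental Brauer-class identity of \cite[9.14]{inv} then reads
\[
[C^+]+[C^-]=[E_L]\text{ (if $n$ is odd)},\qquad [C^+]=[C^-]\text{ and }2[C^+]=[E_L]\text{ (if $n$ is even)},
\]
so in either case $[C^+]=[C^-]=0$ in $\Br(L)$ implies $[E_L]=0$; as $E_L\sim A_L$ in the Brauer group, $A_L$ is split. The main subtlety is the accurate identification of the half-spin Tits algebras with the two components of the Clifford algebra, together with the parity-dependent form of the Clifford fundamental relation; once these structural inputs are in place, the conclusion is formal.
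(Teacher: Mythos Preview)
Your argument is essentially correct but takes a genuinely different route from the paper, and it contains a couple of imprecisions worth flagging.

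The paper's ``$\Rightarrow$'' is more elementary: pick $W^+\in X^+(L)$ and $W^-\in X^-(L)$; hyperbolicity of $h_L$ follows as you say, and then Witt's extension theorem produces $\varphi\in\U(A_L,\sigma_L,h_L)$ with $\varphi(W^+)=W^-$. Since $\SU$ preserves each family $X_n^{\pm}$, one gets $\varphi\notin\SU$, and a lemma of Kneser \cite[2.6, lem.~1.~a)]{Kneser} then forces $A_L$ to be split. This avoids Clifford algebras and Tits algebras entirely.

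Your Clifford/Tits-algebra approach also works, but two points deserve care. First, the ``Tits criterion'' you invoke is not a named theorem; what makes the step valid is that the half-spin variety $X_n^{\varepsilon}$ admits a closed embedding into the Severi--Brauer variety of its Tits algebra $C^{\varepsilon}$ (the twisted spinor embedding), so an $L$-point on $X_n^{\varepsilon}$ yields an $L$-point on $SB(C^{\varepsilon})$ and hence $[C^{\varepsilon}]=0$ in $\Br(L)$. You should say this explicitly rather than appeal to an unnamed criterion. Second, your fundamental relations are misquoted: for $n$ even one has $[C^+]+[C^-]=[E_L]$ (and $2[C^{\pm}]=0$), while for $n$ odd one has $[C^+]+[C^-]=0$ and $2[C^{\pm}]=[E_L]$ (see \cite[(9.12), (9.14)]{inv}). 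Fortunately this does not affect your conclusion, since $[C^+]=[C^-]=0$ forces $[E_L]=0$ under the correct relations in either parity.

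In short: both proofs are valid. The paper's is shorter and uses only Witt extension and a group-theoretic fact about $\U/\SU$; yours leverages the structure theory of Clifford algebras and Tits algebras, which is heavier machinery but perhaps more conceptual.
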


\begin{proof}
Suppose that $A_L$ is split and $h_L$ is hyperbolic. 
Then $h_L$ is Morita equivalent to a hyperbolic quadratic form $q$ over $L$. 
Let $X_0^{\pm}$ be corresponding projective homogeneous spaces under $\SO_{2n}(q)$. 
Since the Witt index of $q$ is $n$, we have $(X_0)_n^{+}(L)\ne\emptyset$ and $(X_0)_n^{-}(L)\ne\emptyset$. 
Since $A_L$ is split, we have $\Ind(A_L)=1|\gcd\{n_1,\cdots, n_r\}$. 
By \cref{2.1}(2), $X_0^{+}(L)\ne\emptyset$ and $X_0^{-}(L)\ne\emptyset$. 
By \cref{2.4}, $X^+(L)\ne\emptyset$ and $X^-(L)\ne\emptyset$. 

Conversely, suppose $X^+(L)\ne\emptyset$ and $X^-(L)\ne\emptyset$. 
Let $W^+\in X^+(L)$ and  $W^-\in X^-(L)$.
Since there exists a totally isotropic subspace of $h_L$ of reduced dimension $n$, which is equal to the Witt index of $h_L$, we have that $h_L$ is hyperbolic. 
By Witt's extension theorem \cite[\S~4, no.~3, th.~1]{Balg9} there exists   
$\varphi\in \U(A, \sigma, h)$ such that $\varphi(W^+)=W^-$.
Since $ \SU(A, \sigma, h)$ sends  $X^+(L)$ into $X^+(L)$ and 
$X^-(L)$ into $X^-(L)$, we obtain $\varphi \not\in \SU(A, \sigma, h)$. 
Thus, by \cite[2.6, lem.~1.~a)]{Kneser}, $A_L$ is split. 
\end{proof}

\subsection{Ramification and patching data}\label{data}
Let $\mathscr X$ be a regular integral scheme with function field $F$. 
For every codimension one point $x$ of $\mathscr X$, let $k(x)$ denote the residue field at $x$. 
Let $l$ be a prime which is unit at every point of $\mathscr X$. 
Let $\lBr(F)$ be the $l$-torsion subgroup of the Brauer group $\Br(F)$ of $F$. 
Then there is a residue homomorphism $\partial_x: \lBr(F) \to H^1(k(x), \mathbb Z/l\mathbb Z)$. 
We say that an element $\alpha \in \lBr(F)$ is {\it ramified} at $x$ if $\partial_x(\alpha)\ne 0$; we say that $\alpha$ is {\it unramified} at $x$ if $\partial_x(\alpha) = 0$. 
The {\it ramification divisor} of $\alpha$
is defined as $\sum x$, where $x$ runs over all codimension one points of $\mathscr X$ with $\partial_x(\alpha) \ne 0$. 
 
Let $K$ be a complete discretely valued field with ring of integers $T$ with a parameter $t$ and residue field $k$.
Let $F$ be the function field of a curve over $K$. 
Let $\mathscr X$ be a regular proper two dimensional scheme over $\Spec(T)$ with function field $F$ and $\mathscr X_1$ its special fiber.
Let $l$ be a prime not equal to $\Char k$. 
Let $\alpha \in \lBr(F)$. 
Then there exists a
regular proper model $\mathscr X$ of $F$ with the support of the ramification divisor of $\alpha$ and $\mathscr X_1$ is a union of regular curves with only normal crossings (see \cite{Lip75} or \cite{Abh}). 
For the patching data, we adopt notations similar to \cite[3.3]{HHK1}. 
For every closed point $P$ of $\mathscr X$, let $\widehat{R_P}$ be the completion of the local ring $R_P$ of $\mathscr X$ at $P$ and $F_P=\Frac(\widehat{R_P})$. 
Let $\mathscr X_\eta$ be an irreducible component of $\mathscr X$ and $U$ be a non-empty open subset of $\mathscr X_\eta$ containing only smooth points. 
Let $R_U$ be the set of elements in $F$ 
which are regular at every closed point of $U$. 
Let $\widehat{R}_U$ be the $(t)$-adic completion of $R_U$ and $F_U=\Frac(\widehat{R}_U)$. 
Let $\mathcal P$ be a finite set of closed points of $\mathscr X$ of our choice and let $\mathcal U$ be the set of connected components of the complement of $\mathcal P$ in the special fiber of $\mathscr X$. 
Harbater, Hartmann and Krashen \cite[3.7]{HHK1} have proved that  if $G$ is a rational connected linear algebraic group over $F$ and $X$ is a projective homogeneous space under $G$, then 
\[\prod_{P\in\mathcal P}X(F_P)\times\prod_{U\in\mathcal U}X(F_U)\ne\emptyset\implies X(F)\ne\emptyset.\]

\subsection{Complete discrete valued field}\label{sec2}
In this section we recall a theorem of Larmour on Hermitian spaces over discretely valued fields and prove results concerning  maximal orders.  
 
Let $(K,v)$ be a discrete valued field with valuation ring $R_v$, maximal ideal $m_v$ and residue field $k(v)=R_v/m_v$, $\Char(k(v))\ne 2$.  
Let $(\widehat{R_v}, \widehat{m_v})$ be the completion of $(R_v, m_v)$ and $K_v=\Frac(\widehat{R_v})$. 
Let $\widehat{v}$ be the extension of $v$ to $K_v$. 
We have $k(\widehat{v})=\widehat{R_v}/\widehat{m_v}=k(v)$. 
Let $D$ be a finite-dimensional division algebra over $K$ with an involution $\sigma$ such that $Z(D)^{\sigma}=K$. 
\textit{Suppose that $D\otimes_KK_v$ is a division algebra over $K_v$}. 
By \cite[ch.~II, 10.1]{CF}, $\widehat v$ extends to a valuation $v'$ on $Z(D\otimes_KK_v)$ such that $v'(x)=\frac{1}{[Z(D\otimes_KK_v):K_v]}v(N_{Z(D\otimes_KK_v)/K_v}(x))$ for all $x\in Z(D\otimes_KK_v)^*$.
By \cite{W1}, $v'$ extends to a valuation $w$ on $D\otimes_KK_v$ such that $w(x)=\frac{1}{\Ind(D\otimes_KK_v)}v'(\Nrd_{D\otimes_KK_v/Z(D\otimes_KK_v)}(x))$ for all $x\in (D\otimes_KK_v)^*$. 
The restriction of $w$ to $D$ is a valuation on $D$ and $w(x)=\frac{1}{\Ind(D)}v(\Nrd_{D/K}(x))$ for all $x\in D^*$. Since $\Nrd_{D/K}(x)=\Nrd_{D/K}(\sigma(x))$, we have $w(\sigma(x))=w(x)$ for all $x\in D$. 
Since $\Nrd_{D/K}(x)=\Nrd_{D/K}(\sigma(x))$, we have $w(\sigma(x))=w(x)$ for all $x\in D$. 
Let $t_D$ be the \textit{parameter} of $(D, w)$ (see \cite[13.2]{Rei}). 
We may choose $\pi_D\in D^*$ such that 
$w(\pi_D)\equiv w(t_D)\mod{2w(D^*)}$ and $\sigma(\pi_D)=\pm \pi_D$ (see \cite[2.7]{L2}). 
Let $R_w=\{x\in D~|~w(x)\ge 0\}$ and $\mathfrak m_w=\{x\in D~|~w(x)> 0\}$. 
Let $D(w)=R_w/\mathfrak m_w$ be the residue division algebra (see \cite[13.2]{Rei}) of $(D, w)$ over $k(v)$ with involution $\sigma_w$ such that $\sigma_w(q_w(x))=q_w(\sigma(x))$ for all $x\in R_w$, where $q_w(x)=x+\mathfrak m_w$. 

\begin{lem}\label{unique-max}
Suppose that $D\otimes_KK_v$ is a division algebra over $K_v$. 
There exists a unique maximal $R_v$-order $\Lambda$ in $D$ and the following four sets are identical. 
\begin{enumerate}
\item the maximal $R_v$-order $\Lambda$ in $D$; 
\item the valuation ring $R_w=\{x\in D~|~w(x)\ge 0\}$; 
\item $N=\{x\in D~|~N_{D/K}(x)\in R_v\}$; 
\item the integral closure $S$ of $R_v$ in $D$.
\end{enumerate}
\end{lem}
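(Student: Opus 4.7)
The plan is to verify $(2)=(3)$, $(2)=(4)$, and then $(1)=(2)$ with uniqueness of $\Lambda$; the algebraic equalities are direct, while the order-theoretic statement is reduced to the complete case.

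The equality $R_w=N$ is immediate from the formula for $w$: since $N_{D/K}(x)=\Nrd_{D/K}(x)^{\Ind(D)}$, one has
\[ v(N_{D/K}(x))\ge 0 \iff v(\Nrd_{D/K}(x))\ge 0 \iff w(x)\ge 0. \]
For $S\subseteq N$, if $x\in D$ is integral over $R_v$, then its minimal polynomial over $K$ lies in $R_v[T]$ by Gauss's lemma for the DVR $R_v$; the characteristic polynomial of left multiplication by $x$ on $D$ is a power of this minimal polynomial, and its constant term is $\pm N_{D/K}(x)$, so $N_{D/K}(x)\in R_v$. Conversely, for $x\in R_w$, the valuation $w$ on $D\otimes_K K_v$ extends uniquely to an algebraic closure of $K_v$ (unique because $K_v$ is complete and $D\otimes_K K_v$ is a division algebra), and takes the same value on all conjugates of $x$; the coefficients of the reduced characteristic polynomial of $x$, being symmetric functions of these conjugates, therefore lie in the valuation ring of $Z(D)$ above $R_v$, so $x$ is integral over $R_v$. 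This gives $R_w\subseteq S$ and completes $(2)=(3)=(4)$.

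For the order-theoretic statement, I would proceed by descent from the completion. By \cite[12.8, 13.2]{Rei} applied to the complete discretely valued field $K_v$ and the division algebra $D\otimes_K K_v$, the valuation ring $\widehat{R_w}$ of $D\otimes_K K_v$ is the unique maximal $\widehat{R_v}$-order in $D\otimes_K K_v$ and is finitely generated over $\widehat{R_v}$. For any maximal $R_v$-order $\Lambda\subseteq D$ (which exists by a Zorn's lemma argument on orders containing $R_v$), the completion $\Lambda\otimes_{R_v}\widehat{R_v}$ is a maximal $\widehat{R_v}$-order in $D\otimes_K K_v$ by faithfully flat descent, hence coincides with $\widehat{R_w}$; intersecting back with $D$ yields $\Lambda=\widehat{R_w}\cap D=R_w$. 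This simultaneously proves the uniqueness of $\Lambda$, the identification $\Lambda=R_w$, and a posteriori that $R_w$ is itself an $R_v$-order.

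The main obstacle is the possible non-completeness of $(R_v,m_v)$, since the classical uniqueness of maximal orders in a division algebra over a discretely valued field requires a complete base. This is handled by passing to $\widehat{R_v}$, using the hypothesis that $D\otimes_K K_v$ remains a division algebra both to define $w$ on $D$ and to invoke Reiner's uniqueness theorem, and then descending via faithful flatness of $\widehat{R_v}$ over $R_v$.
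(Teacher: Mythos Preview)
Your proposal is essentially correct and reaches the same conclusion, but the route differs from the paper's in a meaningful way. The paper proves \emph{all four} equalities uniformly by descent from the completion: it invokes \cite[12.7, 12.8]{Rei} to get $\widehat{\Lambda}=\widehat{R_w}=\widehat{N}=\widehat{S}$ over $K_v$, then intersects each with $D$ (using \cite[5.2, 8.6, 11.5]{Rei} for the descent steps). You instead prove $(2)=(3)$ directly from the formula $w(x)=\tfrac{1}{\Ind(D)}v(\Nrd_{D/K}(x))$ and handle $(2)=(4)$ by an integrality argument on the reduced characteristic polynomial, reserving the passage to $\widehat{R_v}$ only for $(1)=(2)$. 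Your approach for $(2)=(3)$ is more transparent; the paper's is more uniform and avoids revisiting valuation theory on $\overline{K_v}$.

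Two small points deserve tightening. First, the parenthetical ``exists by a Zorn's lemma argument on orders containing $R_v$'' is not quite enough: a union of a chain of orders need not be finitely generated, so one needs the boundedness coming from the reduced trace pairing (this is exactly \cite[10.4]{Rei}, which the paper cites). Second, in your argument for $R_w\subseteq S$, the claim that $w$ ``takes the same value on all conjugates of $x$'' requires the observation that the minimal polynomial of $x$ over $Z(D)\otimes_K K_v$ stays irreducible (because $D\otimes_K K_v$ is division, so $K_v[x]$ is a field), hence all roots of the reduced characteristic polynomial are Galois-conjugate over a complete field and share the same valuation. You gesture at this with ``unique because $K_v$ is complete and $D\otimes_K K_v$ is a division algebra'', but the division hypothesis is doing more work than the sentence suggests: completeness alone gives uniqueness of the extended valuation, while the division hypothesis is what forces all roots to be conjugate.
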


\begin{proof}
\textit{Existence}: By \cite[10.4]{Rei}, there exists a maximal $R_v$-order $\Lambda$ in $D$. 

\textit{Uniqueness}: If $\Lambda$ and $\Lambda'$ are two maximal $R_v$-orders in $D$, by \cite[11.5]{Rei} $\Lambda\otimes\widehat{R_v}$ and $\Lambda'\otimes\widehat{R_v}$ are two maximal $\widehat{R_v}$-orders in $D\otimes K_v$. 
By \cite[12.8]{Rei}, the maximal $\widehat{R_v}$-order in $D\otimes K_v$ is unique. 
Then $\Lambda\otimes\widehat{R_v}=\Lambda'\otimes\widehat{R_v}$. 
Then by \cite[5.2]{Rei}, $\Lambda=(\Lambda\otimes\widehat{R_v})\cap D=(\Lambda'\otimes\widehat{R_v})\cap D=\Lambda'$. 

\textit{Equalities}: Let $\Lambda$ be the unique maximal $R_v$-order in $D$. 
By \cite[12.7, 12.8]{Rei}, the following sets are equal 
\begin{itemize}
\item the maximal $\widehat{R_v}$-order $\widehat{\Lambda}=\Lambda\otimes \widehat{R_v}$ in $D\otimes K_v$; 
\item the valuation ring $\widehat{R_w}=\{x\in D\otimes K_v~|~w(x)\ge 0\}$; 
\item $\widehat{N}=\{x\in D\otimes K_v~|~N_{D\otimes K_v/K_v}(x)\in \widehat{R_v}\}$; 
\item the integral closure $\widehat{S}$ of $\widehat{R_v}$ in $D\otimes K_v$.
\end{itemize}

The proof of (1) equals (2): For $x\in D$, $w(x\otimes 1)=w(x)$, then $\widehat{R_w}\cap D=R_w$. 
Then $\Lambda=\widehat{\Lambda}\cap D=\widehat{R_w}\cap D=R_w$. 

The proof of (1) equals (3): For $x\in D$, by \cite[\S~17, no.~3, prop.~4, (30)]{Bcomm89}, $\Nrd_{(D\otimes K_v)/K_v}(x\otimes 1)=\Nrd_{D/K}(x)$, then $\widehat{N}\cap D=N$. 
Then $\Lambda=\widehat{\Lambda}\cap D=\widehat{N}\cap D=N$. 

The proof of (1) equals (4): By \cite[8.6]{Rei}, $\Lambda\subset S$. 
Also, $S\subset \widehat{S}\cap D=\widehat{\Lambda}\cap D=\Lambda$. 
Therefore $\Lambda=S$. 
\end{proof}

The next lemma will be applied in \cref{qua-order}. 

\begin{lem}\label{ord}
Suppose $D=(a,b)$ is a quaternion division algebra given by $i^2=a$, $j^2=b$, $ij=-ji$, where $a,b\in K$. 
Suppose $D\otimes_KK_v$ is a division algebra over $K_v$. 
If $v(a) = 0$ and   $v(b)\in \{0,1\}$, then $\Lambda=   R_v +   R_vi +  R_vj +  R_vij$ is the unique maximal $  R_v$-order in $D$. 
\end{lem}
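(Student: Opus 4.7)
The plan is to identify $\Lambda$ with the unique maximal $R_v$-order in $D$, which by \cref{unique-max} equals the valuation ring $R_w = \{x \in D : w(x) \ge 0\}$. The inclusion $\Lambda \subseteq R_w$ follows immediately from the formula $w(x) = \tfrac{1}{2} v(\Nrd_{D/K}(x))$ recalled in \cref{sec2}: the reduced norms of the generators $1, i, j, ij$ are $1, -a, -b, ab$, each of non-negative $v$-valuation under the hypotheses $v(a) = 0$ and $v(b) \in \{0, 1\}$.

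For the reverse inclusion I would pass to completions. As in the proof of \cref{unique-max}, both $\Lambda$ and $R_w$ are recovered from their completions by intersection with $D$, so it suffices to prove $\widehat\Lambda := \Lambda \otimes_{R_v} \widehat{R_v}$ equals $\widehat{R_w}$ inside $\widehat{D} := D \otimes_K K_v$. Since $\widehat{D}$ is a division algebra and $v(a) = 0$, Hensel's lemma forces $\bar a \in k(v)$ to be a non-square (otherwise $X^2 - a$ splits in $\widehat{R_v}$ and $i$ becomes a zero divisor). Hence $L := K_v(i) \subset \widehat D$ is the unramified quadratic extension with ring of integers $S := \widehat{R_v}[i] = \widehat{R_v} + \widehat{R_v}\, i$, and $\widehat\Lambda = S + Sj$ as an $\widehat{R_v}$-submodule of $\widehat{R_w}$ (in fact a subring, by $j^2 = b$ and $ji = -ij$).

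I then split on $v(b)$. In the ramified case $v(b) = 1$, $w(j) = 1/2$ makes $j$ a uniformizer of the noncommutative complete DVR $\widehat{R_w}$, whose residue ring is the quadratic field $D(w) = S/\widehat{m_v} S$. Any $\alpha \in \widehat{R_w}$ admits a convergent $j$-adic expansion $\alpha = \sum_{n \ge 0} c_n j^n$ with coefficients $c_n$ drawn from a lift of $D(w)$ to $S$; grouping even and odd indices and using the central relations $j^{2k} = b^k$, $j^{2k+1} = b^k j$ rewrites $\alpha$ as $s_0 + s_1 j$ with $s_0, s_1 \in S$, so $\widehat{R_w} \subseteq \widehat\Lambda$. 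In the unramified case $v(b) = 0$, $\widehat\Lambda/\widehat{m_v}\widehat\Lambda \cong (\bar a, \bar b)_{k(v)}$; were this quaternion algebra split, Hensel would lift an isotropic vector of its associated $2$-fold Pfister form to $K_v$ and split $\widehat D$, contradicting our hypothesis. So $\widehat\Lambda/\widehat{m_v}\widehat\Lambda$ is a $4$-dimensional division $k(v)$-algebra, the induced unital $k(v)$-algebra map into the $4$-dimensional $\widehat{R_w}/\widehat{m_v}\widehat{R_w}$ is injective (simple source) and hence bijective, and Nakayama applied to the finitely generated $\widehat{R_v}$-module $\widehat{R_w}/\widehat\Lambda$ concludes $\widehat\Lambda = \widehat{R_w}$.

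The principal obstacle is the ramified case $v(b) = 1$, where the noncommutative $j$-adic expansion has to be executed carefully. A cleaner fallback is a reduced-discriminant comparison: $\widehat\Lambda$ has reduced discriminant $\widehat{m_v}$ as read off from the diagonal reduced trace form on the basis $1, i, j, ij$, which matches the reduced discriminant of the maximal order in the ramified quaternion algebra $\widehat D$, forcing equality by the index formula $\operatorname{disc}(\widehat\Lambda) = [\widehat{R_w}:\widehat\Lambda]^2 \operatorname{disc}(\widehat{R_w})$. Once $\widehat\Lambda = \widehat{R_w}$ is established in both cases, intersecting with $D$ yields $\Lambda = R_w$.
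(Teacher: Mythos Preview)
Your argument is correct but proceeds quite differently from the paper. Both proofs invoke \cref{unique-max}, but the paper uses the characterization of the maximal order as the integral closure of $R_v$ in $D$, while you use the characterization as the valuation ring $R_w$. For the hard inclusion, the paper stays over $R_v$ and argues directly with the norm form: writing an integral $x = y(x_0 + x_1 i + x_2 j + x_3 ij)$ with the $x_l \in R_v$ not all in $\mathfrak m_v$, the assumption $v(y) < 0$ forces $v(\Nrd(x/y)) \ge 2$, and reducing modulo $\mathfrak m_v$ produces an isotropic vector for $\langle 1,-\bar a,-\bar b,\bar a\bar b\rangle$ (when $v(b)=0$) or for $\langle 1,-\bar a\rangle$ (when $v(b)=1$); Springer's theorem then contradicts $D\otimes_K K_v$ being division. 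Your route instead passes to the completion and uses structural facts about orders over complete DVRs: in the unramified case a Nakayama argument comparing $\widehat\Lambda/\widehat{m_v}\widehat\Lambda\cong(\bar a,\bar b)_{k(v)}$ with the residue algebra of $\widehat{R_w}$, and in the ramified case either a $j$-adic expansion or a discriminant comparison. The paper's approach is shorter and more elementary, needing only Springer's theorem for quadratic forms; yours is more structural and brings in Hensel, Nakayama, and the theory of local orders, but has the virtue of making transparent why the case split on $v(b)$ corresponds exactly to the unramified/ramified dichotomy for $\widehat D$. One small point: in your discriminant fallback you conflate the reduced discriminant with the ordinary discriminant in the index formula; the computation still goes through, but keep the two notions separate.
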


\begin{proof}   
  By \cref{unique-max},  $\Lambda$ is the unique maximal order if and only if $\Lambda$ is the 
 integral closure of $R_v$ in $D$.
 Since $i$ and $j$ are integral over $R_v$, every element of 
$\Lambda$ is integral over $R_v$.  

Let $x\in D$. Then 
$$x= y(x_0+x_1i+x_2j+x_3ij)$$
for some $y \in K^*$ and   $ x_0, x_1, x_2, x_3  \in R_v $  with  $\min\limits_{0\le l\le 3}\{ v(x_l)\}=0$ (i.e. $(\overline{x_0}, \overline{x_1}, \overline{x_2}, \overline{x_3})\ne \vec{0}$ in $k(v)^4$).

Suppose that $x$  is integral over $ R_v$.  
We   show that $y  \in R_v$. 
By taking the reduced norm, we have
$$\Nrd_{D /K} ( x )  = y^2(x_0^2-x_1^2a-x_2^2b+x_3^2ab).$$

Since $x$ is integral over $R_v$, 
$\Nrd_{D/K}(x) \in R_v$ and hence $v(\Nrd_{D/K}(x)) \ge  0$. 
Suppose that $y \not\in R_v$. Then $v(y) < 0$ and 
\begin{equation}\label{nrd}
v(x_0^2 -x_1^2a -x_2^2b + x_3^2ab) =  v(\Nrd_{D/K}(x)y^{-2}) \ge  2.
\end{equation} 
%

\noindent
Case 1:   $D$ is unramified at $v$.  
Then $v(a)=v(b)=0$. 
 By going modulo the maximal ideal of $R_v$ and using  \cref{nrd}, 
 we see that $(\overline{x_0}, \overline{x_1}, \overline{x_2}, \overline{x_3}) \in k(v)^4$ is an 
 isotropic vector  for  $\langle \overline{1}, -\overline{a}, -\overline{b},\overline{a}\overline{b}\rangle$. 
Since $K_v$ is a complete discretely valued field,  by a theorem of Springer,
 $\langle 1, -a, -b,ab\rangle$ is isotropic over $K_v$, which contradicts the fact that $D\otimes_KK_v$ is division. 
Hence $ y \in R_v$. 

\noindent
Case 2: $D$ is ramified at $v$. Then $v(a) = 0$ and   $v(b) = 1$.
Since $(\overline{x_0}, \overline{x_1}, \overline{x_2}, \overline{x_3})\ne \vec{0}$, we have $(\overline{x_0}, \overline{x_1})\ne \vec{0}$ or $(\overline{x_2}, \overline{x_3})\ne \vec{0}$ in $k(v)^2$. 

Suppose $(\overline{x_0}, \overline{x_1})\ne \vec{0}$. 
Going modulo the maximal ideal of $R_v$  and using  \cref{nrd}, we see that 
$(\overline{x_0}, \overline{x_1}) \in k(v)^2$ is an isotropic vector for 
 $\langle \overline{1}, -\overline{a}\rangle$. 

Suppose $(\overline{x_0}, \overline{x_1})=\vec{0}$. 
Then $(\overline{x_2}, \overline{x_3})\ne \vec{0}$.
Since $v(x_0) = v(x_1) \ge  1$,
$v(x_0^2 - x_1^2a) \ge  2$. 
Then,  by \cref{nrd}, we have $v(x_2^2b - x_3^2ab) \ge  2$.
Since $v(b) = 1$,  $v(x_2^2 - x_3^2a) \ge  1$. Once again going modulo the maximal ideal 
of $R_v$, we see that $(\overline{x_2}, \overline{x_3}) \in k(v)^2$ is an isotropic vector of $\langle \overline{1}, -\overline{a} \rangle$. 

By a theorem of Springer, $\langle 1, -a \rangle$ is isotropic over $K_v$, which contradicts the fact that $D\otimes_KK_v$ is division. Hence $y \in R_v$. 

In both cases, we have $y \in R_v$. Thus $x \in \Lambda$ and  $\Lambda$ is the unique maximal $R_v$-order in $D$. 
\end{proof}
 
Let $(V, h)$ be an $\varepsilon$-hermitian space over $(D,\sigma)$ for $\varepsilon\in \{1, -1\}$. 
Then there exists an orthogonal basis of $V$ such that $h$ has a diagonal form 
$\langle a_1,\cdots,a_m\rangle$, $a_i\in D$, $\sigma(a_i)=\varepsilon a_i$. 
If $w(a_i)= 0$ for all $i$, then $q_w(h) = \langle q_w(a_1),\cdots,q_w(a_m)\rangle  \in\Herm^{\varepsilon}(D(w), \sigma_{w})$. 
Up to isometry, we may assume that any $h\in\Herm^{\varepsilon}(D,\sigma)$ has diagonal entries with $w$-value either $0$ or $w(t_D)$ \cite[2.20]{L2}. 

\begin{prop}[{\cite[3.4, 3.6]{L1}, \cite[3.27, 3.29]{L2}}]\label{Lamour}
Suppose $\sigma(\pi_D)=\varepsilon'\pi_D$ for $\varepsilon'\in\{1,-1\}$. 
There exists a unique decomposition $h_{K_v}\simeq h_1\perp h_2\pi_D$, where $h_1\in\Herm^{\varepsilon}(D\otimes_KK_v, \sigma\otimes_K\Id_{K_v})$, $h_2\in \Herm^{\varepsilon\varepsilon'}(D\otimes_KK_v, \Int(\pi_D)\circ(\sigma\otimes_K\Id_{K_v}))$ and each diagonal entry of $h_1$ and $h_2$ has $w$-value $0$. 
Furthermore, the following are equivalent: 

(a) $h$ is isotropic;  

(b) $h_1$ or $h_2$ is isotropic; 

(c) $q_w(h_1)$ or $q_w(h_2)$ is isotropic.

\end{prop}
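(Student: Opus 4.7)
The plan is to construct the decomposition by diagonalising and sorting diagonal entries according to their $w$-values, then to prove uniqueness from the valuation filtration, and finally to establish the isotropy equivalences via a minimal-valuation reduction together with a Hensel-type lifting.

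First I would diagonalise $h_{K_v}\simeq\langle a_1,\ldots,a_m\rangle$ with $\sigma(a_i)=\varepsilon a_i$. By the remark from \cite[2.20]{L2} recorded above, each $a_i$ may be assumed to satisfy $w(a_i)\in\{0,w(t_D)\}$; combined with $w(\pi_D)\equiv w(t_D)\pmod{2w(D^*)}$, and using that the rescaling $a_i\mapsto d_ia_i\sigma(d_i)$ shifts $w(a_i)$ by $2w(d_i)$, I can arrange $w(a_i)\in\{0,w(\pi_D)\}$. The entries with $w(a_i)=0$ assemble into $h_1$; for entries with $w(a_i)=w(\pi_D)$ I write $a_i=b_i\pi_D$ with $w(b_i)=0$ and collect them into $h_2$. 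Applying $\sigma$ to $a_i=b_i\pi_D$ and using $\sigma(\pi_D)=\varepsilon'\pi_D$ gives $(\Int(\pi_D)\circ\sigma)(b_i)=\varepsilon\varepsilon' b_i$, so $h_2$ sits in the claimed hermitian category. Uniqueness follows because the two summands correspond to the two distinct cosets of $2w(D^*)$ in $w(D^*)$ allowed among valuations of diagonal entries, and within each coset one is reduced to a Witt-cancellation statement.

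For the equivalences, (c)$\Rightarrow$(b) comes from Hensel lifting: since $\Char k(v)\ne 2$ and $q_w(h_1),q_w(h_2)$ are nondegenerate (all diagonal entries are units), an isotropic residue vector lifts to an isotropic vector over $K_v$. The implication (b)$\Rightarrow$(a) is immediate. For (a)$\Rightarrow$(c), take an isotropic vector $v\ne 0$ for $h$, rescale on the left by an element of $D^*$ so that the minimal $w$-value of its coordinates equals $0$, and expand $h(v,v)=0$. The contributions from the $h_1$-coordinates lie in $2w(D^*)$, while those from the $h_2\pi_D$-coordinates lie in $w(\pi_D)+2w(D^*)$; since $w(\pi_D)$ represents the nontrivial class in $w(D^*)/2w(D^*)$, the two lowest-valuation parts cannot cancel each other, so each must vanish separately modulo higher valuation, producing a nontrivial isotropic residue vector for $q_w(h_1)$ or for $q_w(h_2)$.

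The main obstacle will be the bookkeeping in (a)$\Rightarrow$(c): choosing the correct scaling of $v$, identifying the ``leading part'' of each coordinate in the right residue space, and justifying that the two cosets of $2w(D^*)$ prohibit cross-cancellation. This is exactly where the hypothesis $w(\pi_D)\not\in 2w(D^*)$ (implicit in $w(\pi_D)\equiv w(t_D)\pmod{2w(D^*)}$, since $w(t_D)$ generates $w(D^*)$) does the essential work, and it is also what drives uniqueness in the decomposition.
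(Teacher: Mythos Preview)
The paper does not supply its own proof of this proposition: it is stated with attribution to Larmour \cite[3.4, 3.6]{L1}, \cite[3.27, 3.29]{L2} and used as a black box thereafter (in \cref{new} and \cref{pre-odd}). So there is nothing in the paper to compare your argument against beyond the citation itself.

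Your sketch is the standard Springer--Larmour argument and is essentially correct. Diagonalising, normalising valuations of diagonal entries into the two cosets of $2w(D^*)$, and splitting off $\pi_D$ is exactly how the decomposition is built; the check that $h_2$ lands in $\Herm^{\varepsilon\varepsilon'}$ for the twisted involution is right. The (a)$\Rightarrow$(c) step via minimal valuation and coset separation is the heart of the matter, and your identification of the key hypothesis $w(\pi_D)\notin 2w(D^*)$ is correct.

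One point to tighten: your implication (b)$\Rightarrow$(a) is \emph{not} immediate as written. Isotropy of $h_1$ or $h_2$ gives isotropy of $h_{K_v}$ over $D\otimes_K K_v$, not of $h$ over $D$; passing back down to $K$ is exactly the content of the Springer--Larmour theorem and genuinely uses completeness (Hensel). Over a non-complete $K$ the implication (c)$\Rightarrow$(a) can fail for the same reason that a quadratic form over $\mathbb{Q}$ can be anisotropic while its reduction modulo a prime is isotropic. In the paper's applications the ambient field is already complete (e.g.\ $K_\pi$, $F_v$, $L_v$), so (a) should be read as ``$h_{K_v}$ is isotropic'', and then your chain (c)$\Rightarrow$(b)$\Rightarrow$(a) via Hensel lifting is fine. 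Just be explicit that the lifting step lands you over the completion.
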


\section{Complete regular local ring of dimension 2}\label{sec7}
 We fix the following notation and assumption throughout this section. 
\begin{itemize}
\item $R$ is a complete regular noetherian local ring of dimension 2, 
\item $K$ is the field of fractions of $R$, 
\item $\mathfrak m=(\pi,\delta)$ is the maximal ideal of $R$, 
\item $k=R/\mathfrak m$, $\Char k\ne 2$, 
\item $L = K(\sqrt{\lambda})$, $\lambda \in R$ with $\lambda = w$, $w\pi$ or $w\delta$ for a unit $w \in R$,
\item $S$ is the integral closure of $R$ in $L$.
\end{itemize}

By the assumption on $\lambda$ and \cite[3.1, 3.2]{PS14}, $S$ is a regular local ring of dimension 2 with maximal ideal $(\pi' , \delta')$, where
\begin{itemize}
\item if $\lambda = w$ is a unit in $R$, then $\pi' = \pi$ and $\delta' = \delta$; 
\item if $\lambda = w\pi$, then $\pi' = \sqrt{w\pi}$ and $\delta' = \delta$; 
\item if $\lambda = w\delta$, then $\pi' = \pi$ and $\delta' = \sqrt{w\delta}$.
\end{itemize}

Let $D$ be a central division algebra over $L$ which is unramified at all height one prime ideals of $S$ except possibly at $\pi'$ and $\delta'$.
 Let $\mathfrak p$ be a height one prime ideal of $S$. 
By \cite[th.~2]{M}, the valuation $v_{\mathfrak p}$ extends to $D$ if and only if $D\otimes_LL_{\mathfrak p}$ is a division algebra. 
Suppose $\deg(D)=d$ and $K$ contains a primitive $d$-th root of unity, by \cite[2.4]{RS}, $D\otimes_LL_{(\pi')}$ and $D \otimes_LL_{(\delta')}$ are division. 
Let $w_{\pi'}$ and $w_{\delta'}$ be the unique extensions of $v_{(\pi')}$ and $v_{(\delta')}$ to $D\otimes_LL_{(\pi')}$ 
and $D \otimes_LL_{(\delta')}$, respectively. 

\subsection{Decomposition of diagonal forms}
\begin{lem}\label{decomp}
Suppose that $\deg(D)=d$, $K$ contains a primitive $d$-th root of unity and $D$ has an involution $\sigma$ (of the first or the second kind) with $L^{\sigma}=K$. 
Suppose there exists a maximal $S$-order $\Lambda$ in $D$ with $\sigma(\Lambda) = \Lambda$ 
and $\pi_D, \delta_D \in \Lambda$ such that

\noindent 
(1) $\Nrd_{D/L}(\pi_D)=u_0\pi'^{d/e_0}$, where $u_0\in R^*$, $e_0=[w_{\pi'}(D^*):v_{\pi'}(L^*)]$ and $e_0$ is invertible in $k$; 
$\Nrd_{D/L}(\delta_D)=u_1\delta'^{d/e_1}$, where $u_1\in R^*$, $e_1=[w_{\delta'}(D^*):v_{\delta'}(L^*)]$ and $e_1$ is invertible in $k$. 

\noindent
(2) $\sigma(\pi_D)= \varepsilon_0 \pi_D$, $\sigma(\delta_D)=\varepsilon_1 \delta_D$ 
 and $\pi_D\delta_D= \varepsilon_2 \delta_D\pi_D$, $\varepsilon_0, \varepsilon_1, \varepsilon_2\in\{1, -1\}$. 

Let $c\in \Lambda$ such that $\sigma(c)=\pm c$ and $\Nrd_{D/L}(c)= u_c\pi'^{dm/e_0}\delta'^{dn/e_1}$ for $u_c \in S^*$, $m, n\in \mathbb Z$. 
 Then 
 $$\langle c\rangle\simeq \langle \theta \pi_D^{m'}\delta_D^{n'}\rangle$$
 for $\theta \in \Lambda^*$ and $m', n'\in\{0,1\}$. 
\end{lem}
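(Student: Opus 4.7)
The plan is to bring $c$ into the normal form $\theta\pi_D^{m'}\delta_D^{n'}$ in two stages: first show that $c=\theta_0\pi_D^m\delta_D^n$ for some $\theta_0\in\Lambda^*$, and then reduce the exponents $m,n$ modulo $2$ by applying the isometry $\langle c\rangle\simeq\langle\sigma(y)cy\rangle$ with $y$ a suitable product of powers of $\pi_D$ and $\delta_D$.

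For the first stage, I would set $\theta_0:=c\,\delta_D^{-n}\pi_D^{-m}\in D^*$. By multiplicativity of the reduced norm, $\Nrd(\theta_0)=\Nrd(c)\cdot u_1^{-n}\delta'^{-dn/e_1}\cdot u_0^{-m}\pi'^{-dm/e_0}=u_cu_0^{-m}u_1^{-n}\in S^*$, so it suffices to show $\theta_0\in\Lambda$; then $\theta_0\in\Lambda^*$ because in a maximal order an element with unit reduced norm is a unit. Integrality of $\theta_0$ I would check locally at each height one prime of $S$. At $\pi'$ the element $\pi_D$ has $w_{\pi'}$-value $1/e_0$ by the hypothesis $\Nrd(\pi_D)=u_0\pi'^{d/e_0}$, while $\delta_D$ is a $w_{\pi'}$-unit (its reduced norm being a $\pi'$-unit), so $w_{\pi'}(\theta_0)=w_{\pi'}(c)-m/e_0=0$; the argument at $\delta'$ is symmetric; at every other height one prime $\mathfrak p$, $D$ is unramified and $\pi_D,\delta_D$ are units in the local maximal order, so $w_{\mathfrak p}(\theta_0)=w_{\mathfrak p}(c)\ge 0$. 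Since a maximal $S$-order in $D$ equals the intersection of its localizations at height one primes, which rests on the uniqueness of local maximal orders via \cref{unique-max} after completion, we conclude $\theta_0\in\Lambda$.

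For the second stage, write $m=2a+m'$ and $n=2b+n'$ with $m',n'\in\{0,1\}$ and take $y=\pi_D^{-a}\delta_D^{-b}$. Using $\sigma(\pi_D^{\pm a})=\varepsilon_0^a\pi_D^{\pm a}$, $\sigma(\delta_D^{\pm b})=\varepsilon_1^b\delta_D^{\pm b}$, and the commutation $\pi_D\delta_D=\varepsilon_2\delta_D\pi_D$ to move factors past one another, a direct computation yields $\sigma(y)cy=\theta\pi_D^{m'}\delta_D^{n'}$, where $\theta$ is $\pm 1$ times a conjugate of $\theta_0$ by a product of powers of $\pi_D$ and $\delta_D$. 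Because $\pi_D$ and $\delta_D$ normalize $\Lambda$ globally (the conjugate $\pi_D\Lambda\pi_D^{-1}$ is again a maximal $S$-order, which coincides with $\Lambda$ at every localization by local uniqueness, and similarly for $\delta_D$), we get $\theta\in\Lambda^*$. Moreover $\sigma(\theta\pi_D^{m'}\delta_D^{n'})=\pm\theta\pi_D^{m'}\delta_D^{n'}$ inherits the sign of $\sigma(c)=\pm c$, as required for a diagonal $\varepsilon$-hermitian entry.

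The main obstacle is the noncommutative bookkeeping in the second stage: one must track the signs $\varepsilon_0,\varepsilon_1,\varepsilon_2$ through the reordering consistently with $\sigma(c)=\pm c$, and ensure that the resulting $\theta$ lies in $\Lambda^*$ rather than merely in $D^*$. The normalization of $\Lambda$ by $\pi_D$ and $\delta_D$, which ultimately descends from the uniqueness clause of \cref{unique-max} at the ramified primes $\pi'$ and $\delta'$, is the pivotal structural input.
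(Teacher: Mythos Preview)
Your argument is correct and follows the same essential strategy as the paper: exhibit the isometry $\langle c\rangle\simeq\langle\sigma(x)\,c\,x\rangle$ for a suitable monomial $x$ in $\pi_D,\delta_D$, and then verify that the resulting coefficient lies in $\Lambda^*$ by checking $\Lambda_{\mathfrak p}^*$ at each height one prime $\mathfrak p$ of $S$, treating $(\pi')$, $(\delta')$ via \cref{unique-max} and the remaining primes via the fact that $\pi_D,\delta_D$ are local units there.

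The paper's version is more economical: it sets $x=\pi_D^{r}\delta_D^{s}$ and defines $\theta=\varepsilon_c\,x^{-1}cx^{-1}(\pi_D^{m'}\delta_D^{n'})^{-1}$ directly, then checks $\theta\in\Lambda_{\mathfrak p}^*$ in one pass. Your two-stage route (first extract $\theta_0=c\,\delta_D^{-n}\pi_D^{-m}\in\Lambda^*$, then reduce exponents and argue that conjugates of $\theta_0$ remain in $\Lambda^*$ because $\pi_D,\delta_D$ normalize $\Lambda$) is sound but introduces the extra normalization step, which the paper sidesteps. One small correction: at primes $\mathfrak p\ne(\pi'),(\delta')$ the completion $D\otimes_L L_{\mathfrak p}$ need not be a division algebra, so there is no valuation $w_{\mathfrak p}$ on $D$ in general; your line ``$w_{\mathfrak p}(\theta_0)=w_{\mathfrak p}(c)\ge 0$'' should be replaced by the direct observation that $\pi_D,\delta_D\in\Lambda_{\mathfrak p}^*$ (their reduced norms being $\mathfrak p$-units), which is exactly how the paper handles this case.
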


\begin{proof}
Since $\Nrd_{D/L}(c)=u_c\pi'^{dm/e_0}\delta'^{dn/e_1}$, it follows that  $w_{\pi'}(c)=mw_{\pi'}(\pi'_D)$ and $w_{\delta'}(c)=nw_{\delta'}(\delta'_D)$. 
Write $m=2r+m'$, $n=2s+n'$ with $m', n'\in \{0,1\}$. 
Let $x=\pi_D^{r}\delta_D^{s}$. 
Then $\sigma(x)=\varepsilon_0^r\varepsilon_1^s( \varepsilon_2)^{rs}x = \varepsilon_cx$, where 
 $\varepsilon_c =\varepsilon_0^r\varepsilon_1^s( \varepsilon_2)^{rs}\in\{1, -1\}$.
By the choice of $\pi_D$ and $\delta_D$, we have $\Nrd_{D/L}(x)=u_0^ru_1^s\pi'^{dr/e_0}\delta'^{ds/e_1}$. 

Let $\theta =\varepsilon_cx^{-1}cx^{-1}(\pi_D^{m'}\delta_D^{n'})^{-1}$. 
Then $c=\sigma(x)(\theta \pi_D^{m'}\delta_D^{n'})x$. 
In particular we have 
 $$\langle c\rangle\simeq \langle \theta \pi_D^{m'}\delta_D^{n'}\rangle.$$
Thus it is enough to show that $\theta \in \Lambda^*$. 

Since $\Lambda=\bigcap_{\mathfrak p}\Lambda_{\mathfrak p}$, where $\mathfrak p$ runs through all height one prime ideals of $S$, we have $\Lambda^*=\bigcap_{\mathfrak p}\Lambda_{\mathfrak p}^*$. 
It suffices to show that $\theta\in \Lambda_{\mathfrak p}^*$ for all height one prime ideals 
$\mathfrak p$ of $S$. 
We have that $\Nrd_{D/L}(\theta) = \Nrd_{D/L}(x)^{-2}\Nrd_{D/L}(c)\Nrd_{D/L}(\pi_D^{m'}\delta_D^{n'})^{-1}
= u_c$ is a unit in $S$. 

\noindent
Case 1: Suppose $\mathfrak p\ne (\pi'), (\delta')$. Since $\pi_D, \delta_D \in \Lambda$ and $\Nrd_{D/L}(\pi_D)$,
$\Nrd_{D/L}(\delta_D)$ are units at ${\mathfrak p}$, by \cite[4.3(c)]{Sal0}, $\pi_D$ and $\delta_D$ are units in 
$\Lambda_{\mathfrak p}$. 
Since $x\in\Lambda_{\mathfrak p}^*$ and $c\in \Lambda$, we have $\theta \in \Lambda_{\mathfrak p}$.
Since $\Nrd_{\Lambda_{\mathfrak p}/S_{\mathfrak p}}(\theta)=\Nrd_{D/L}(\theta)\in S^*$, 
by \cite[4.3(c)]{Sal0}, $\theta \in \Lambda_{\mathfrak p}^*$. 

\noindent
Case 2: Suppose $\mathfrak p=(\pi')$. 
Since $w_{\pi'}(\theta )=0$, by \cref{unique-max}, $\theta \in \Lambda_{(\pi')}^*$. 

\noindent
Case 3: Suppose $\mathfrak p=(\delta')$. 
The proof of $\theta \in \Lambda_{(\delta')}^*$ is similar to Case 2. 
\end{proof}

\begin{cor}\label{decomp2} Let $D$, $\sigma$, $\Lambda$, $\pi_D$ and $\delta_D$ be as in \cref{decomp}.
Let $h$ be an $\varepsilon$-hermitian space over $(D, \sigma)$. 
Suppose that $ h = \langle a_1, \cdots, a_n\rangle$ with $a_i\in \Lambda$ and $\Nrd_{D/L}(a_i)$ is a unit of $S$ times a power of $\pi'$ and a power of $\delta'$ for all $1\le i\le n$. 
Then $$h \simeq \langle u_1, \cdots, u_{n_0}\rangle \perp \langle v_1, \cdots, v_{n_1}\rangle\pi_D \perp \langle w_1, \cdots, w_{n_2}\rangle \delta_D \perp \langle \theta_1, \cdots, \theta_{n_3}\rangle \pi_D\delta_D$$
with $u_i, v_i, \theta_i \in \Lambda^*$ and $n_0+n_1+n_2+n_3=n$.
\end{cor}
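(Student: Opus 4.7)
The plan is to apply Lemma~\ref{decomp} to each diagonal entry $a_i$ individually and then regroup the resulting terms by the parity of the exponents of $\pi_D$ and $\delta_D$. Since $h$ is $\varepsilon$-hermitian and diagonal, each $a_i$ automatically satisfies $\sigma(a_i)=\varepsilon a_i$, so the symmetry hypothesis of Lemma~\ref{decomp} is met. By assumption $a_i\in\Lambda$, and $\Nrd_{D/L}(a_i)=u_i\pi'^{k_i}\delta'^{l_i}$ with $u_i\in S^*$ and $k_i,l_i\in\mathbb Z_{\ge 0}$.

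The first thing I would verify is that the exponents are automatically of the form required by Lemma~\ref{decomp}, namely $k_i=dm_i/e_0$ and $l_i=dn_i/e_1$ for integers $m_i,n_i$. This follows from the valuation theory developed in Section~\ref{sec2}: the extension $w_{\pi'}$ of $v_{(\pi')}$ to $D\otimes_LL_{(\pi')}$ has value group $\frac{1}{e_0}\mathbb Z$ (normalizing $v_{(\pi')}(\pi')=1$), and for any $x\in(D\otimes_LL_{(\pi')})^*$ we have $v_{(\pi')}(\Nrd_{D/L}(x))=d\,w_{\pi'}(x)\in\frac{d}{e_0}\mathbb Z$; the analogous statement holds at $(\delta')$. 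Since $a_i\in\Lambda\subset\Lambda_{(\pi')}\cap\Lambda_{(\delta')}$, the exponents $k_i$ and $l_i$ land in the required arithmetic progressions.

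Having checked the hypotheses, Lemma~\ref{decomp} applied to each $a_i$ yields
\[
\langle a_i\rangle\simeq\langle \theta_i\,\pi_D^{m_i'}\delta_D^{n_i'}\rangle,\qquad \theta_i\in\Lambda^*,\ (m_i',n_i')\in\{0,1\}^2,
\]
and consequently $h\simeq\perp_{i=1}^n\langle\theta_i\pi_D^{m_i'}\delta_D^{n_i'}\rangle$. Partitioning the indices $i=1,\dots,n$ according to the four possible values of $(m_i',n_i')\in\{(0,0),(1,0),(0,1),(1,1)\}$ and reordering the orthogonal sum (which is allowed for diagonal hermitian forms) immediately produces the desired block decomposition with the four units groups of sizes $n_0,n_1,n_2,n_3$ summing to $n$.

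I do not anticipate a serious obstacle: the only subtlety is the normalization check in the second paragraph ensuring that Lemma~\ref{decomp} applies, and this is essentially bookkeeping on value groups of the extended valuations $w_{\pi'}$ and $w_{\delta'}$. The rest is formal: apply the previous lemma entry by entry and regroup.
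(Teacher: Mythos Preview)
Your proposal is correct and follows exactly the approach the paper intends: the paper's proof reads in full ``Follows from \cref{decomp},'' and you have simply unpacked this by applying Lemma~\ref{decomp} to each diagonal entry and regrouping. Your added verification that the exponents of $\pi'$ and $\delta'$ in $\Nrd_{D/L}(a_i)$ lie in $\frac{d}{e_0}\mathbb Z$ and $\frac{d}{e_1}\mathbb Z$ respectively is a helpful detail that the paper leaves implicit.
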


\begin{proof} Follows from \cref{decomp}.
\end{proof}

\begin{cor}\label{new}
Under all hypotheses of \cref{decomp2},
if $h\otimes_K1_{K_{\pi}}$ is isotropic over $(D\otimes_KK_{\pi}, \sigma\otimes_K\Id_{K_{\pi}})$ or $h\otimes_K1_{K_{\delta}}$ is isotropic over $(D\otimes_KK_{\delta}, \sigma\otimes_K\Id_{K_{\delta}})$, then $h$ is isotropic over $(D, \sigma)$. 
\end{cor}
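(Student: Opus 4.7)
The plan is to regroup the summands in the decomposition from \cref{decomp2} so that they realize Larmour's canonical decomposition (\cref{Lamour}) at the prime $\pi'$ of $S$ already over $L$, and then to iterate at the prime $\delta'$. The argument handles only the case where $h \otimes_K 1_{K_\pi}$ is isotropic, as the $K_\delta$ case is symmetric under the interchange of $\pi_D$ and $\delta_D$.

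Set
\[h \simeq h_1 \perp h_2 \pi_D, \quad h_1 := \langle u_1,\ldots,u_{n_0}\rangle \perp \langle w_1\delta_D,\ldots,w_{n_2}\delta_D\rangle, \quad h_2 := \langle v_1,\ldots,v_{n_1}\rangle \perp \langle \theta_1\delta_D,\ldots,\theta_{n_3}\delta_D\rangle.\]
Since $u_i, v_i, w_j, \theta_l \in \Lambda^*$ have unit reduced norms and $\Nrd_{D/L}(\delta_D) = u_1\delta'^{d/e_1}$ is a unit at $\pi'$, every diagonal entry of $h_1$ and $h_2$ has $w_{\pi'}$-value $0$. By the uniqueness clause in \cref{Lamour}, this coincides with the Larmour decomposition of $h_{K_\pi}$. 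The equivalence (a)$\Leftrightarrow$(b) of \cref{Lamour} then forces at least one of $h_{1, K_\pi}$ or $h_{2, K_\pi}$ to be isotropic.

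Say $h_{1, K_\pi}$ is isotropic. Now apply the same reasoning to $h_1 \simeq \langle u_i\rangle \perp \langle w_j\rangle\delta_D$ at $\delta'$: this already realizes the Larmour decomposition of $h_{1, K_\delta}$, since $\delta_D$ is a uniformizer at $\delta'$ and the entries are units of $\Lambda$. Combining \cref{Lamour}(a)$\Leftrightarrow$(c) at $\pi'$ with a second application at $\bar\delta'$ — the uniformizer of the $1$-dimensional regular local ring $S/\pi'S$ — the hypothesis reduces to isotropy of a hermitian form with unit entries over the residue field $k$ of $R$. Because every entry of every intermediate residue form lies in the image of $\Lambda^*$, Hensel's lemma applied in two stages — first at $\bar\delta'$ in $S/\pi'S$ and then at $\pi'$ in $S$ — lifts this isotropic vector back through the residues to produce an isotropic vector of $h_1$, hence of $h$, over $L$.

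The main technical obstacle is the final lifting step. Residue isotropy of a hermitian form does not in general descend to global isotropy, so the argument must exploit the full integrality structure provided by \cref{decomp2}: entries in $\Lambda^*$, the uniformizers $\pi_D, \delta_D$ with prescribed commutation and involution relations, and the completeness and $2$-dimensionality of $R$. Executing the two-stage Hensel lift requires choosing isotropic vectors to be primitive in the relevant $\Lambda$-lattices at each stage and tracking the twisted involutions $\Int(\pi_D)\circ\sigma$, $\Int(\delta_D)\circ\sigma$, $\Int(\pi_D\delta_D)\circ\sigma$ that appear in each piece of the decomposition; this bookkeeping, rather than any single clever step, is the real technical heart of the proof.
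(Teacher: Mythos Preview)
Your approach is essentially the paper's: decompose via \cref{decomp2}, apply Larmour's theorem twice (first at $\pi'$, then at the image $\bar\delta'$ of $\delta'$ in the residue) to reduce isotropy of $h_{K_\pi}$ to isotropy of one $q_{\bar\delta'}(q_{\pi'}(h_{ij}))$ over the double residue, and then lift back using completeness. The paper carries out the lift in a single stroke by observing that each $h_{ij}$ is defined over the maximal $R$-order $\Lambda$ (entries in $\Lambda^*$) and citing \cite[ch.~II, 4.6.1, 4.6.2]{Knus}, which gives Hensel lifting of isotropy for hermitian spaces over a complete local ring directly from the residue field; your two-stage Hensel lift (first along $\bar\delta'$ in $S/\pi'S$, then along $\pi'$ in $S$) amounts to the same thing once one notes that $\Lambda$, being finite over the $\mathfrak m$-adically complete $R$, is complete for both the maximal-ideal-adic and the $(\pi')$-adic topologies.

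One expository wrinkle: the sentence ``this already realizes the Larmour decomposition of $h_{1,K_\delta}$'' is misleading, since nothing is known about $h_1$ over $K_\delta$; what you actually use (and later say correctly) is Larmour at $\bar\delta'$ applied to the residue form $q_{\pi'}(h_1)$ over $D(\pi')$. Cleaning that up and replacing the final paragraph of meta-commentary with the explicit reference to \cite[ch.~II, 4.6]{Knus} would make the argument match the paper's proof almost line for line.
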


\begin{proof}
By \cref{decomp2}, we have 
\[h\simeq h_{00}\perp h_{10}\delta_D\perp h_{01}\pi_D\perp h_{11}\delta_D\pi_D\]
where diagonal entries of $h_{ij}$ are in $\Lambda^*$. 
Applying Larmour's result \cref{Lamour} to $h_{K_{\pi'}}$, we have 
$q_{\pi'}(h_{00}\perp h_{10}\delta_D)$ or $q_{\pi'}(h_{01}\perp h_{11}\delta_D)$ is isotropic over $D(\pi')$. 
Applying \cref{Lamour} again, we obtain that one of $q_{\bar{\delta'}}(q_{\pi'}(h_{ij}))$ is isotropic over $D(\pi')(\bar{\delta'})$. 
Since the diagonal entries of $h_{ij}$ are in $\Lambda^*$, $(D, \Int(\delta_D^{i}\pi_D^{j})\circ\sigma, h_{ij})$ is defined over the maximal $R$-order $\Lambda$ in $D$. 
By \cite[ch.~II, 4.6.1 and 4.6.2]{Knus}, 
one of $h_{ij}$ is isotropic over $(\Lambda, \Int(\delta_D^{i}\pi_D^{j})\circ\sigma|_\Lambda)$. 
Then one of $h_{ij}\delta_D^i\pi_D^j$ is isotropic over $(\Lambda, \sigma|_\Lambda)$. 
Then $h$ is isotropic over $(\Lambda, \sigma|_{\Lambda})$ and hence over $(D, \sigma)$. 
\end{proof}

\begin{cor}\label{new2}
Let $R$, $K$, $S$ and $L$ be as before and let $\iota$ be an automorphism of $L$ such that $\iota|_K=\Id_K$. 
Let $h  = \langle a_1, \cdots, a_n\rangle$ be an $\varepsilon$-hermitian space over $(L, \iota)$ for $\varepsilon \in\{1, -1\}$. 
Suppose that each divisor of $a_i$ is supported only along $\pi'$ and $\delta'$. 
If $h\otimes_K1_{K_{\pi}}$ is isotropic over $(L\otimes_KK_{\pi}, \iota\otimes_K\Id_{K_{\pi}})$ or $h\otimes_K1_{K_{\delta}}$ is isotropic over $(L\otimes_KK_{\delta}, \iota\otimes_K\Id_{K_{\delta}})$, then $h$ is isotropic over $(L/K, \iota)$. 
\end{cor}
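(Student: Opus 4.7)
The plan is to reduce \cref{new2} to \cref{new} by viewing $L$ itself as a trivial central ``division algebra'' over $L$ of degree $1$, with involution $\sigma = \iota$, maximal $R$-order $\Lambda = S$, and uniformizing elements $\pi_D = \pi'$, $\delta_D = \delta'$. First I would check that all the hypotheses of \cref{decomp} collapse correctly in this commutative setting. The primitive root of unity condition is vacuous since $d = 1$, and $S$ is manifestly $\iota$-stable. The reduced norm on $L$ is the identity, so the ramification indices satisfy $e_0 = e_1 = 1$ and the normalizing units $u_0, u_1$ can both be taken to be $1$, which gives $\Nrd_{L/L}(\pi_D) = \pi'^{1/1}$ and $\Nrd_{L/L}(\delta_D) = \delta'^{1/1}$ as required.

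Next I would verify the sign hypotheses $\iota(\pi') = \varepsilon_0 \pi'$ and $\iota(\delta') = \varepsilon_1 \delta'$ by running through the three cases for $\lambda$: if $\lambda$ is a unit in $R$, then $\pi', \delta' \in K$ are fixed by $\iota$; if $\lambda = w\pi$, then $\pi' = \sqrt{w\pi}$ is negated by the nontrivial element of $\Gal(L/K)$ while $\delta' = \delta \in K$ is fixed; and the case $\lambda = w\delta$ is symmetric. Commutativity of $L$ forces $\pi' \delta' = \delta' \pi'$, so $\varepsilon_2 = 1$. The hypothesis that each $a_i$ has divisor supported only on $\pi'$ and $\delta'$ becomes exactly the reduced norm hypothesis of \cref{decomp2} once we identify $\Nrd_{L/L}$ with the identity map, namely $a_i = u_i \, \pi'^{m_i} \delta'^{n_i}$ for some unit $u_i \in S^*$.

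With these hypotheses checked, I would invoke \cref{new} verbatim: isotropy of $h$ over $(L \otimes_K K_\pi, \iota \otimes_K \Id_{K_\pi})$ or over $(L \otimes_K K_\delta, \iota \otimes_K \Id_{K_\delta})$ forces isotropy of $h$ over $(L, \iota)$, which is the claim. The only anticipated obstacle is the edge case $\iota = \Id_L$ on a genuine quadratic extension $L/K$, where the condition $L^\sigma = K$ demanded in \cref{decomp} fails; in that degenerate situation, however, $h$ is simply a quadratic form over $L$, and the underlying proof of \cref{new} (decomposing $h$ into four blocks indexed by the parities of the $\pi'$- and $\delta'$-valuations, applying Larmour's \cref{Lamour} to pass to the residue, and lifting back via \cite[ch.~II, 4.6.1 and 4.6.2]{Knus} applied to the maximal $R$-order $S$) goes through unchanged in the commutative setting, so the statement follows by the same argument.
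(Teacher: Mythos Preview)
Your proposal is correct and follows exactly the paper's approach: the paper's proof is the single line ``Let $D = L$, $\sigma=\iota$, $\Lambda = S$, $\pi_D = \pi'$ and $\delta_D = \delta'$ in \cref{new},'' and you have done precisely this, together with a careful verification of the hypotheses that the paper leaves implicit. Your treatment of the edge case $\iota = \Id_L$ on a genuine quadratic extension (where the fixed-field condition $L^{\sigma} = K$ in \cref{decomp} formally fails) is more thorough than the paper's terse proof and correctly observes that the underlying argument of \cref{new} is insensitive to this condition in the commutative setting.
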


\begin{proof}
Let $D = L$, $\sigma=\iota$, $\Lambda = S$, 
$\pi_D = \pi'$ and $\delta_D = \delta'$ in \cref{new}. 
\end{proof}
 
\subsection{Parameters for quaternion algebras}
Suppose $D$ is a quaternion division algebra. The aim of the rest of the section is to show that there exists a maximal order $\Lambda$, $\pi_D$ and $\delta_D$ as in \cref{decomp}. 

We begin with Saltman's classification. 
 
\begin{prop}{\cite[1.2]{Sal1, Sal1.5}, \cite[2.1]{Sal3}}\label{class2}
Suppose $\alpha\in {_2}\Br(K)$. 
If $\alpha$ is unramified at all height one prime ideals of $R$ except possibly at $(\pi)$ and $(\delta)$, then $\alpha$ is of the form $\alpha=\alpha'+\alpha''$, where $\alpha'\in \Br(R)$ and $\alpha$ is described as follows: 

\noindent
(i) If $\alpha$ is unramified at all height one prime ideals of $R$, then $\alpha=\alpha'$; 

\noindent
(ii) If $\alpha$ is ramified only at $(\pi)$, then $\alpha=\alpha'+(u,\pi)$ for some $u\in R^*- {R^*}^2$; 

\noindent
(iii) If $\alpha$ is ramified only at $(\pi)$ and $(\delta)$, then there exists $u,v\in R^*$ such that

(a) $\alpha=\alpha'+(u\pi,v\delta)$; or 


(b) $\alpha=\alpha'+(u,\pi)+(v,\delta)$, where $u$, $v$ and $uv$ are not squares and $u,v$ are in different square classes; or 

(c) $\alpha=\alpha'+(u,\pi\delta)$, where $u$ is not a square. 
\end{prop}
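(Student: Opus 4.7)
This is a restatement of Saltman's classification \cite[1.2]{Sal1}, \cite[2.1]{Sal3}, and I would follow his strategy. The two main ingredients are purity for the Brauer group of the 2-dimensional regular local ring $R$ and the explicit computation of residues of symbol algebras.

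I would start by recalling, for each height one prime $\mathfrak p$ of $R$, the residue map $\partial_{\mathfrak p}\colon {_2}\Br(K)\to H^1(k(\mathfrak p),\mathbb Z/2\mathbb Z)\simeq k(\mathfrak p)^*/{k(\mathfrak p)^*}^2$, together with the Auslander--Goldman purity theorem: a class in $\Br(K)$ that is unramified at every height one prime of $R$ lies in $\Br(R)$. This gives case (i) at once, and it reduces (ii) and (iii) to finding $\alpha''$ of the prescribed symbol form whose residues at $(\pi)$ and $(\delta)$ match those of $\alpha$; once this is done, $\alpha-\alpha''$ is everywhere unramified and lies in $\Br(R)=\alpha'$.

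For case (ii), I would lift a representative of $\partial_\pi(\alpha)\in k(\pi)^*/{k(\pi)^*}^2$ to a unit $u\in R^*$; since $u$ is a unit, the symbol $(u,\pi)$ has residue $\bar u$ at $(\pi)$ and is unramified at every other height one prime of $R$, so $\alpha''=(u,\pi)$ works.

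The main obstacle is case (iii), where residues are nontrivial at both $(\pi)$ and $(\delta)$ and one must respect the compatibility condition at the closed point $\mathfrak m=(\pi,\delta)$: the two first residues $\partial_\pi(\alpha)\in k(\pi)^*/{k(\pi)^*}^2$ and $\partial_\delta(\alpha)\in k(\delta)^*/{k(\delta)^*}^2$ each carry a second residue into $k^*/{k^*}^2$, and by Faddeev-type reciprocity at $\mathfrak m$ these second residues must coincide mod 2. I would then enumerate the types of compatible pairs. When both first residues are represented by units $u,v\in R^*$ whose images in $k^*/{k^*}^2$ are distinct, nontrivial and not in the same class, the symbol $(u,\pi)+(v,\delta)$ matches them and produces case (b). When both first residues come from the same unit class $\bar u$, the symbol $(u,\pi\delta)$ has residue $\bar u$ at both primes and gives case (c). The remaining possibility is that a parameter appears in each first residue, i.e.\ $\partial_\pi(\alpha)=\overline{v\delta}|_{k(\pi)}$ and $\partial_\delta(\alpha)=\overline{u\pi}|_{k(\delta)}$ for units $u,v\in R^*$; then the symbol $(u\pi,v\delta)$ realises exactly this pair of residues, producing case (a). In each subcase one checks directly from $\partial_\mathfrak q(a,b)$ for $a,b\in K^*$ that the chosen $\alpha''$ has the required first residues at $(\pi),(\delta)$ and is unramified at every other height one prime; the hard work lies precisely in verifying that the three configurations above exhaust the compatibility types allowed by the second-residue condition, which is Saltman's main combinatorial input.
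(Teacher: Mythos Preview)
The paper does not give its own proof of this proposition; it is simply quoted from Saltman \cite[1.2]{Sal1,Sal1.5}, \cite[2.1]{Sal3}, and your sketch is essentially Saltman's argument: use Auslander--Goldman purity to reduce to matching residues, then classify the possible residue patterns at $(\pi)$ and $(\delta)$ via the second-residue compatibility at the closed point. So you are on the same track as the cited source.

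One small gap worth flagging: in case (ii) you assert that $\partial_\pi(\alpha)\in k(\pi)^*/{k(\pi)^*}^2$ lifts to a unit $u\in R^*$, but this is not automatic. Since $k(\pi)=\Frac(R/(\pi))$ is a complete discretely valued field with parameter $\bar\delta$, a square class in $k(\pi)$ is represented either by a unit or by a unit times $\bar\delta$. Ruling out the latter requires exactly the same second-residue reciprocity you invoke for case (iii): because $\partial_\delta(\alpha)=0$, the compatibility at $\mathfrak m$ forces $v_{\bar\delta}(\partial_\pi(\alpha))\equiv 0\bmod 2$, so $\partial_\pi(\alpha)$ is represented by a unit in $R/(\pi)$ and hence lifts to $R^*$. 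Once you add that sentence, the sketch is complete and matches Saltman's proof.
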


\begin{lem}\label{qua} Let $D$ be a quaternion division algebra over $K$ which is unramified at all height one prime ideals of $R$ except possibly at $(\pi)$ and $(\delta)$.
Then $D$ is isomorphic to one of the following over $K$. 
\begin{enumerate} 
\item $(u, v)$, $u, v \in R^*$; 
\item $(u, v\pi)$, $u\in R^*$ is not a square;
\item $(u, v\delta)$, $u\in R^*$ is not a square;
\item $(u\pi,v\delta)$, $u, v\in R^*$; 
\item $(u, v\pi\delta)$, $u\in R^*$ is not a square and $v\in R^*$. 
\end{enumerate} 
\end{lem}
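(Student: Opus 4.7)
The plan is to apply Saltman's classification (Proposition~\ref{class2}) to the Brauer class $[D]\in{_2}\Br(K)$ and realise each resulting case explicitly as a quaternion symbol of one of the five listed forms. Throughout, I would use the standard fact that the unramified contribution $\alpha'\in{_2}\Br(R)$ can be replaced by an explicit symbol with unit entries: since $R$ is a complete regular local ring of dimension two with residue field of characteristic not two, every Azumaya quaternion algebra over $R$ is of the form $(u_0,v_0)_R$ with $u_0,v_0\in R^*$, obtained by lifting the quaternion algebra $(\bar u_0,\bar v_0)_k$ over the residue field.

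After this reduction, Saltman's case (i) immediately yields form (1). In case (ii), $[D]$ is Brauer equivalent to $(u_0,v_0)+(u,\pi)$ with $u\in R^*$ a nonsquare. Since $[D]$ has index two, Albert's theorem on sums of quaternion algebras guarantees a common quadratic splitting subfield of $(u_0,v_0)$ and $(u,\pi)$. Taking $K(\sqrt{u})$ as this subfield (it already splits $(u,\pi)$) allows us to rewrite $(u_0,v_0)=(u,c)$ for some $c\in R^*$, and the sum collapses to $(u,c\pi)$, which is form (2); the same argument with $\delta$ in place of $\pi$ gives form (3). Cases (iii)(a) and (iii)(c) are handled analogously, absorbing $(u_0,v_0)$ into $(u\pi,v\delta)$ (form (4)) and into $(u,\pi\delta)$ (form (5)) respectively via Albert-type common-slot arguments.

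The main obstacle is Saltman's case (iii)(b), in which $[D]=\alpha'+(u,\pi)+(v,\delta)$ with $u$, $v$, and $uv$ all nonsquares. A priori the biquaternion $(u,\pi)\otimes(v,\delta)$ has index up to four, so the quaternion hypothesis on $D$ must force extra identities. Here I would apply the bilinearity identity
\[
(u,\pi)+(v,\delta) \;=\; (u,\pi\delta)+(uv,\delta),
\]
obtained by expanding $(u,\pi\delta)=(u,\pi)+(u,\delta)$ and using 2-torsion, to rewrite $[D] = \bigl(\alpha'+(uv,\delta)\bigr)+(u,\pi\delta)$. The first summand is unramified outside $(\delta)$ and hence is a symbol of form (1) or (3) by the already-treated cases; the second is a symbol of form (5). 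A final application of Albert's theorem, with $K(\sqrt{uv})$ as a common quadratic splitting field of the two summands, identifies $[D]$ with a single quaternion symbol in one of the five prescribed forms. Throughout, I would take care that all units introduced lie in $R^*$ and that the nonsquare hypotheses on the first slots of forms (2), (3), (5) are preserved by these manipulations, which follows from tracking the ramification data of $[D]$ at $(\pi)$ and $(\delta)$.
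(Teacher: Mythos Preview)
Your overall strategy---apply Proposition~\ref{class2} and treat each case---is exactly the paper's. The execution, however, has a recurring gap: Albert's theorem only guarantees \emph{some} common quadratic splitting field for two quaternion algebras whose sum has index~$2$; it does not let you prescribe which one. In case~(ii) you write ``Taking $K(\sqrt{u})$ as this subfield'', but nothing you have said forces the common subfield to be $K(\sqrt{u})$ rather than some other $K(\sqrt{w})$. The paper closes this gap with a precise index formula (from \cite[2.4]{RS}): $\Ind(D)=2\cdot\Ind(\alpha'\otimes K(\sqrt{u}))$, which directly forces $\alpha'\otimes K(\sqrt{u})$ to be split and hence $\alpha'=(u,c)$ with $c\in R^*$. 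The same device drives cases~(iii)(a) and~(iii)(c): in~(iii)(a) the paper shows $\alpha'=0$ outright (not merely that it can be ``absorbed''), again via the index computation, so that $D=(u\pi,v\delta)$ on the nose.

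Your treatment of Saltman's case~(iii)(b) breaks down. After rewriting $[D]=\bigl(\alpha'+(uv,\delta)\bigr)+(u,\pi\delta)$ you propose $K(\sqrt{uv})$ as a common splitting field of the two summands, but $K(\sqrt{uv})$ does not split $(u,\pi\delta)$ unless $v$ is a square, which is excluded. The paper takes the opposite route: using the index formula $\Ind(\alpha)=\Ind(\alpha'\otimes K(\sqrt{u},\sqrt{v}))\cdot[K(\sqrt{u},\sqrt{v}):K]$, the hypothesis $\Ind(D)=2$ forces $[K(\sqrt{u},\sqrt{v}):K]\le 2$, so $u$ and $v$ lie in the same square class---contradicting the standing hypothesis of~(iii)(b). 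Thus this case simply does not occur for a quaternion $D$, and no symbol manipulation is needed. Replacing your Albert/common-slot steps with these index computations would make the argument go through.
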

\begin{proof} 
(1) Suppose $D$ is unramified on $R$. 
By \cite[7.4]{AG}, there exists an Azumaya algebra ${\mathcal D}$ over
$R$ with ${\mathcal D} \otimes_R K \simeq D$. 
Since $D$ is a quaternion algebra over $K$,
${\mathcal D} \otimes_R k$ is a quaternion algebra over $k$. 
Hence ${\mathcal D} \otimes_R k= (a, b)$ for $a, b 
\in k^*$. 
Let $u, v \in R^*$ be lifts of $a, b \in k$. 
Since $R$ is complete, by \cite[6.5]{AG}, $D\simeq(u, v)$. 

(2) Let $\alpha$ be the class of $D$ in ${_2}\Br(K)$.
Suppose that $D$ is ramified on $R$ only at $(\pi)$. 
Then, by \cref{class2}, $ \alpha = \alpha' + (u, \pi)$ for $\alpha' \in \Br(R)$ and $u\in R^*$. 
As in the proof of \cite[2.4]{RS}, we have 
 $\Ind(D) = \Ind(D \otimes K_\pi) = 2( \Ind(\alpha' \otimes K(\sqrt{u})))$. 
 Since $D$ is a quaternion 
 algebra, $\alpha \otimes K(\sqrt{u})$ is split.
Then $\alpha' = (u, v)$ for some $v \in K^*$. 
Since $\alpha'$ is unramified on $R$, we may assume that $v \in R^*$. 
Thus $\alpha = \alpha' \otimes (u, \pi) = (u, v) \otimes (u, \pi) = (u, v\pi)$ in $\Br(K)$. Then $D=(u,v\pi)$. 

(3) Similarly, if $D$ is ramified only at $\delta$, then $D = (u, v\delta)$.

(4) and (5). Suppose that $D$ only ramifies at $\pi$ and $\delta$. 
Then, by \cref{class2}, we have
$\alpha = \alpha' + \alpha''$ with $\alpha'\in\Br(R)$ and $\alpha'' = 
(u\pi, v\delta)$ or $(u, \pi) + (v, \delta)$ or $(u, \pi\delta)$ with $u, v \in R^*$. 

(i) Suppose that $\alpha'' = (u, \pi\delta)$. 
Then as above, it follows that $D = (u, v \pi\delta)$. 

(ii) Suppose that $\alpha'' = (u\pi, v\delta)$. 
Then, as above, we have that $\alpha'' \otimes K(\sqrt{\delta})$ is trivial. Since $\alpha'$ is unramified on $R$, as in the proof of \cite[2.4]{RS}, $\alpha' $ is trivial.
Thus $\alpha = (u\pi, v\delta)$. 

(iii) Suppose that $\alpha'' = (u, \pi) + (v, \delta)$. 
As in the proof of \cite[2.4]{RS}, we have 
$\Ind(\alpha ) = \Ind(\alpha' \otimes K(\sqrt{u}, \sqrt{v})) \cdot [K(\sqrt{u}, \sqrt{v}) : K]$.
Since $\Ind(\alpha) = 2$, we have $[K(\sqrt{u}, \sqrt{v}) : K] \le  2$. 
Since $u$ and $v$ are non-squares in $K$, $u$ and $v$ are in the same square class, a contradiction to \cref{class2}(iii)(b). 
Thus this case does not happen. 
\end{proof}

Next, we consider maximal-orders of certain quaternion algebras. 

\begin{lem}\label{qua-order}
Let $D=(a,b)$ be a quaternion division algebra over $K$ given by $i,j$ such that $i^2=a$, $j^2=b$ and $ij=-ji$. 
Let $\Lambda$ be the $R$-algebra generated by $\{1, i, j, ij\}$. 
If $D$ has one of the forms of \cref{qua}, 
then $\Lambda$ is a maximal $R$-order in $D$. 
\end{lem}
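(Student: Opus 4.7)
My plan is to establish the maximality of $\Lambda$ locally at every height-one prime of $R$, using the fact that over a $2$-dimensional regular local ring a reflexive $R$-order whose localization at each height-one prime is maximal is itself maximal (both $\Lambda$ and any maximal order containing it are reflexive, hence equal to the intersection of their height-one localizations). Since $\Lambda = R \oplus Ri \oplus Rj \oplus Rij$ is a free $R$-module, reflexivity is automatic, so it suffices to check that $\Lambda_{\mathfrak p}$ is a maximal $R_{\mathfrak p}$-order in $D \otimes_K K_{\mathfrak p}$ for every height-one prime $\mathfrak p \subset R$.

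The argument then splits into two kinds of primes across the five forms of $D$ in \cref{qua}. At any $\mathfrak p$ where both defining parameters $a,b$ of the presentation $D = (a,b)$ are units in $R_{\mathfrak p}$, the localization $\Lambda_{\mathfrak p}$ is the quaternion Hilbert-symbol algebra over $R_{\mathfrak p}$ and is therefore Azumaya over $R_{\mathfrak p}$; since Azumaya algebras over local rings are maximal orders in their own generic fibers, $\Lambda_{\mathfrak p}$ is maximal here. This handles every $\mathfrak p \notin \{(\pi),(\delta)\}$ in all five cases, as well as $(\delta)$ in case (2) and $(\pi)$ in case (3), where $D$ is in fact unramified.

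It remains to treat $\mathfrak p \in \{(\pi),(\delta)\}$ at which $D$ is genuinely ramified, where I invoke \cref{ord}. After possibly interchanging $i$ and $j$ (which leaves $\Lambda$ unchanged as a subset of $D$) one arranges $v_{\mathfrak p}(a) = 0$ and $v_{\mathfrak p}(b) \in \{0,1\}$; for example in case (4) at $\mathfrak p = (\pi)$ this amounts to replacing the presentation $(u\pi,v\delta)$ with the isomorphic $(v\delta,u\pi)$. The division hypothesis of \cref{ord} then reduces to checking that $D \otimes_K K_v$ is division at the completion, which is automatic: a $2$-torsion Brauer class over a complete discretely valued field of residue characteristic $\neq 2$ is split precisely when its residue vanishes, so ramification of $D$ at $\mathfrak p$ forces $D \otimes K_v$ nonsplit, and degree $2$ then forces it division. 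Ramification of $D$ at the relevant prime is immediate from its form together with Saltman's classification: in particular in cases (4) and (5), $D$ is by construction ramified at both $(\pi)$ and $(\delta)$. With both hypotheses in place, \cref{ord} identifies $\Lambda_{\mathfrak p}$ with the unique maximal $R_{\mathfrak p}$-order in $D \otimes K_{\mathfrak p}$, which completes the verification. The principal obstacle is purely bookkeeping across the five cases to make sure that exactly one of the Azumaya and \cref{ord} arguments applies at every $\mathfrak p$; no deeper new input is required.
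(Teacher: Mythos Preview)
Your approach is essentially the same as the paper's: reduce to height-one primes via reflexivity of the free module $\Lambda$ (the paper cites Auslander--Goldman \cite[1.5]{AG0} for this), handle primes where $a,b$ are units by the Azumaya argument, and invoke \cref{ord} at the remaining primes. The paper treats $(\pi)$ and $(\delta)$ uniformly by citing \cite[2.4]{RS} to conclude $D\otimes_K K_\pi$ and $D\otimes_K K_\delta$ are division, then applies \cref{ord} directly (which also covers the unramified case $v(b)=0$); you instead split off the Azumaya subcases first and argue division only at the ramified primes. Both routes work.

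One small correction: your parenthetical ``split precisely when its residue vanishes'' is false in general (vanishing residue means unramified, not split, unless ${_2}\Br$ of the residue field is trivial). You only use the correct direction, namely that a ramified class is nonsplit, so this does not affect the argument; but you should either weaken the statement to an implication or, as the paper does, simply cite \cite[2.4]{RS} for the division property at $(\pi)$ and $(\delta)$.
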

\begin{proof}
By definition, $\Lambda$ is an order in $D$. 
By \cite[1.5]{AG0}, an order of a noetherian integrally closed domain is maximal if and only if it is reflexive and its localization at all height one prime ideals are maximal orders. 
Since $R$ is a regular local ring, it is a noetherian integrally closed domain. 
Since $\Lambda$ is a finitely generated free $R$-module, it is reflexive. 
We show that $\Lambda_{\mathfrak p}$ is a maximal $R_{\mathfrak p}$-order for all height one prime ideals $\mathfrak p$ of $R$. 

\noindent
Case 1: Suppose $\mathfrak p\ne (\pi)$ and $\mathfrak p\ne (\delta)$. 
Then $a, b\in R_{\mathfrak p}^*$ and 
hence $\Lambda_{\mathfrak p}$ is an Azumaya algebra over $R_{\mathfrak p}$. In particular 
$\Lambda_{\mathfrak p}$ is a maximal $R_{\mathfrak p}$-order in $D$. 

\noindent
Case 2: Suppose $\mathfrak p=(\pi)$. 
Then, by \cite[2.4]{RS}, $D\otimes_KK_{\pi}$ is a quaternion division algebra over $K_{\pi}$.  
By \cref{ord}, $\Lambda_{(\pi)}$ is a maximal $R_{(\pi)}$-order in $D$. 

\noindent
Case 3: Suppose $\mathfrak p=(\delta)$. 
Similar to case 2, we can show that $\Lambda_{(\delta)}$ is a maximal $R_{(\delta)}$-order in $D$.
\end{proof}

Next, we construct parameters for certain quaternions with involutions of the first kind. 

\begin{lem}\label{parameters}
Let $D$ be a quaternion division algebra over $K$ having one of the forms of \cref{qua} \textit{except} (5)
and let $\sigma$ be the canonical involution on $D$. 
Let $\Lambda$ be the maximal order as in \cref{qua-order}.

Then there exists $\pi_D, \delta_D\in \Lambda$ such that 

(1) $\Nrd_{D/K}(\pi_D)=u_0\pi^{2/e_0}$ and   $\Nrd_{D/K}(\delta_D)=u_1\delta^{2/e_1}$, where $u_0, u_1\in R^*$,  $e_0=[w_{\pi}(D^*):v_{\pi}(K^*)]$, $e_1=[w_{\pi}(D^*):v_{\delta}(K^*)]$ and $e_0, e_1\in\{1,2\}$; 

(2) $\sigma(\pi_D)=\pm \pi_D$, $\sigma(\delta_D)=\pm \delta_D$, $\sigma(\pi_D\delta_D)=\pm \pi_D\delta_D$ and $\pi_D\delta_D=\pm \delta_D\pi_D$. 
\end{lem}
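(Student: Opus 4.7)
The plan is to prove this by a case-by-case analysis over the four forms of $D$ given in \cref{qua} (cases (1)--(4)), exhibiting explicit candidates $\pi_D, \delta_D \in \Lambda$ in each case. The canonical involution $\sigma$ on $D = (a,b)$ fixes $K$ pointwise and sends $i \mapsto -i$, $j \mapsto -j$, $ij \mapsto -ij$, which I will use repeatedly to check the sign conditions in (2). For the reduced norm computations, I will use $\Nrd(x) = x\sigma(x)$, giving $\Nrd(i) = -a$ and $\Nrd(j) = -b$ when $D = (a,b)$. Also, since $D$ is quaternion, $\Ind(D) = 2$, so $w_{\pi}(x) = \tfrac12 v_{\pi}(\Nrd(x))$ and similarly for $w_{\delta}$, which lets me read off $e_0$ and $e_1$ from the reduced norm of the proposed parameters.

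In \textbf{case (1)}, $D = (u,v)$ with $u,v \in R^*$ is unramified at both $(\pi)$ and $(\delta)$, so $e_0 = e_1 = 1$. I take $\pi_D = \pi$ and $\delta_D = \delta$, both central, obtaining $\Nrd(\pi_D) = \pi^2$, $\Nrd(\delta_D) = \delta^2$, with all involution and commutation signs equal to $+$. In \textbf{case (2)}, $D = (u, v\pi)$ is ramified at $(\pi)$ only, so $e_0 = 2$, $e_1 = 1$; I take $\pi_D = j$ (with $j^2 = v\pi$) and $\delta_D = \delta \in R$, giving $\Nrd(\pi_D) = -v\pi^{2/2}$, $\Nrd(\delta_D) = \delta^2$, with $\sigma(\pi_D) = -\pi_D$, $\sigma(\delta_D) = \delta_D$, and $\pi_D\delta_D = \delta_D\pi_D$ since $\delta$ is central. \textbf{Case (3)} is symmetric to case (2) with the roles of $\pi$ and $\delta$ swapped. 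In \textbf{case (4)}, $D = (u\pi, v\delta)$ is ramified at both primes, $e_0 = e_1 = 2$, and I take $\pi_D = i$, $\delta_D = j$ (with $i^2 = u\pi$, $j^2 = v\delta$); then $\Nrd(i) = -u\pi$, $\Nrd(j) = -v\delta$, $\sigma(i) = -i$, $\sigma(j) = -j$, $\sigma(ij) = -ij$, and $ij = -ji$, so all signs in (2) are $\pm 1$ as required.

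In every case the candidates lie in $\Lambda = R + Ri + Rj + Rij$, so membership in $\Lambda$ is automatic. The unit $u_0 \in R^*$ (respectively $u_1$) produced by each choice is either $1$, $-v$, or $-u$, hence lies in $R^*$ as required. Since $\Char k \ne 2$, the integers $e_0, e_1 \in \{1,2\}$ are invertible in $k$, so the hypothesis on invertibility of the ramification indices in \cref{decomp} will be satisfied when this lemma is applied.

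The main (mild) obstacle is bookkeeping: making sure that the element chosen for $\pi_D$ in a ramified case really realizes the extension of the valuation $w_\pi$ (equivalently, has $w_\pi$-value equal to the parameter's $w_\pi$-value modulo $2w_\pi(D^*)$), and similarly for $\delta_D$. This is verified via the reduced norm formula $w_\pi(x) = \tfrac12 v_\pi(\Nrd(x))$: in the ramified case the exponent of $\pi$ in $\Nrd(\pi_D)$ is $1 = 2/e_0$, so $w_\pi(\pi_D) = 1/2$, the minimal positive value, while in the unramified case the exponent is $2 = 2/e_0$ and $w_\pi(\pi_D) = 1$ matches $w_\pi(\pi)$. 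The exclusion of case (5) $D = (u, v\pi\delta)$ is precisely because in that case one cannot separate the ramification at the two height-one primes into an element uniformizing at $\pi$ and another uniformizing at $\delta$ simultaneously, so no such parameters exist in $\Lambda$.
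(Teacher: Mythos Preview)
Your proof is correct and follows essentially the same approach as the paper: a case-by-case analysis over the four forms of $D$ with exactly the same explicit choices of $\pi_D$ and $\delta_D$ (namely $\pi,\delta$ in case (1), $j,\delta$ in case (2), $\pi,j$ in case (3), and $i,j$ in case (4)). Your additional verification of the ramification indices via the reduced-norm formula and your remark on why case (5) fails are useful elaborations that the paper omits.
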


\begin{proof}
We discuss every case of \cref{qua} except (5). 
In the following, $u, v$ are units and we assume them nonsquare if necessary (to make $D$ a division algebra). We assume that for a quaternion algebra $(a, b)$, $i^2=a$, $j^2=b$, $ij=-ji$. 
If $D=(u,v)$, take $\pi_D=\pi$ and $\delta_D=\delta$; otherwise take $\pi_D$ and $\delta_D$ as follows. 
\begin{center}
\begin{tabular}{|c|c|c|c|c|c|c|c|}
\hline
$D$ & $\pi_D$ & $\delta_D$ & $\Nrd(\pi_D)$ & $\Nrd(\delta_D)$ & $\sigma(\pi_D)$ & $\sigma(\delta_D)$ & $\sigma(\pi_D\delta_D)$ \\
\hline
 $(u,v\pi)$ & $j$ & $\delta$ & $-v\pi$ & $\delta^2$ & $-\pi_D$ & $\delta_D$ & $-\pi_D\delta_D$\\
\hline
$(u, v\delta)$ & $\pi$ & $j$ & $\pi^2$ & $-v\delta$ & $\pi_D$ & $-\delta_D$ & $-\pi_D\delta_D$\\
\hline
$(u\pi, v\delta)$ & $i$ & $j$ & $-u\pi$ & $-v\delta$ & $-\pi_D$ & $-\delta_D$ & $-\pi_D\delta_D$ \\
\hline 
\end{tabular}
\end{center}
Then $\pi_D$ and $\delta_D$ have required properties.
\end{proof}

Next, we construct parameters for certain quaternions with involutions of the second kind.
Suppose that $L/K$ is a degree 2 extension and $D/L$ a quaternion algebra with an involution 
$\sigma$ of second kind. Then,  
by a theorem of Albert (see \cite[2.22]{inv}),  there exists a quaternion algebra $D_0$ over $K$ 
such that $D \simeq D_0 \otimes_K L$ and the involution $\sigma $ maps to the involution 
$\sigma \otimes \iota$ where $\sigma_0$ is the canonical involution of $D_0$ and 
$\iota$ is the non-trivial  automorphism of $L/K$.
 
\begin{lem}\label{parameters2}
Let $L=K(\sqrt{\lambda})$, $S$ and $(\pi', \delta')$ as before. 
Let $D_0$ be a quaternion division algebra over $K$  which is unramified  at 
all height one prime ideals of $R$ except possibly at $(\pi)$ and $(\delta)$. 
If  $D_0 = (u, v\pi\delta)$,  we suppose that $\lambda$ is not a unit in $R$.
Let  $D = D_0 \otimes_K L$.
Let  $\sigma_0$ the canonical involution of $D_0$,
$\iota$ be the non-trivial automorphism of $L/K$ 
and $\sigma = \sigma_0 \otimes_K \iota$. 
If $D$ is division, 
then there exist   a maximal 
$S$-order $\Lambda$ in $D$ which is invariant under 
$\sigma$ and $\pi_D, \delta_D \in \Lambda$ such that 

(1) $\Nrd_{D/L}(\pi_D)=u_0\pi'^{2/e_0}$ and $\Nrd_{D/L}(\delta_D)=u_1\delta'^{2/e_1}$, where $u_0, u_1\in S^*$, $e_0=[w_{\pi'}(D^*): v_{\pi'}(L^*)]$, $e_1=[w_{\delta'}(D^*):v_{\delta'}(L^*)]$ and $e_0, e_1 \in\{1,2\}$; 

(2) $\sigma(\pi_D)=\pm \pi_D$, $\sigma(\delta_D)=\pm \delta_D$, $\sigma(\pi_D\delta_D)=\pm \pi_D\delta_D$ and $\pi_D\delta_D=\pm \delta_D\pi_D$. 
\end{lem}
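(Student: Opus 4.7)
The plan is to reduce to the classification of $D_0$ given in \cref{qua} and to construct $\Lambda$, $\pi_D$, $\delta_D$ case by case, mirroring the construction in \cref{parameters} but modified for the passage from $K$ to $L$. First I would let $\Lambda_0 = R + Ri + Rj + Rij$ be the maximal $R$-order in $D_0$ provided by \cref{qua-order}, where $i, j$ are the standard generators, and set $\Lambda = \Lambda_0 \otimes_R S$. Since $S/R$ is a finite free extension of regular local rings and $\Lambda_0$ is a maximal $R$-order, one can verify, locally at height one primes of $S$ exactly as in the proof of \cref{qua-order}, that $\Lambda$ is a maximal $S$-order in $D = D_0 \otimes_K L$; alternatively this follows from the uniqueness in \cref{unique-max} applied to the completions at $(\pi')$ and $(\delta')$. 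The canonical involution $\sigma_0$ preserves $\Lambda_0$ and $\iota$ preserves $S$, so $\sigma = \sigma_0 \otimes \iota$ preserves $\Lambda$.

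Next I would choose $\pi_D$ and $\delta_D$ according to the form of $D_0$ and to whether $L/K$ ramifies at $(\pi)$ or $(\delta)$. In the four cases of \cref{qua} other than $(u, v\pi\delta)$, \cref{parameters} already gives elements $\pi_{D_0}, \delta_{D_0} \in \Lambda_0$ with the required reduced norm and involution properties over $K$. I would take $\pi_D = \pi_{D_0} \otimes 1$ when $L/K$ is unramified at $(\pi)$ (so $\pi' = \pi$) and $\pi_D = 1 \otimes \pi'$ when $L = K(\sqrt{w\pi})$ and the corresponding $\pi_{D_0}$ from \cref{parameters} lies in $R \subset \Lambda_0$; analogous choices work for $\delta_D$. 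Using $\Nrd_{D/L}(x \otimes 1) = \Nrd_{D_0/K}(x)$ and $\Nrd_{D/L}(1 \otimes s) = s^2$, together with the substitution $\pi = w^{-1}\pi'^2$ (when $\lambda = w\pi$) or $\delta = w^{-1}\delta'^2$ (when $\lambda = w\delta$), the reduced norms of $\pi_D, \delta_D$ take the required form $u_0 \pi'^{2/e_0}$ and $u_1 \delta'^{2/e_1}$ with $e_0, e_1 \in \{1, 2\}$. The involution identities $\sigma(\pi_D) = \pm \pi_D$, $\sigma(\delta_D) = \pm \delta_D$, $\sigma(\pi_D \delta_D) = \pm \pi_D \delta_D$ and the commutation $\pi_D \delta_D = \pm \delta_D \pi_D$ then follow from the corresponding identities in $\Lambda_0$ (the table in \cref{parameters}) combined with $\iota(\pi') = \pm \pi'$ and $\iota(\delta') = \pm \delta'$.

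It remains to handle $D_0 = (u, v\pi\delta)$, where by hypothesis $\lambda$ is not a unit. If $\lambda = w\pi$, then $\pi = w^{-1}\pi'^2$ is a unit times a square in $L^\ast$, so in $\Br(L)$ one has $(u, v\pi\delta) = (u, vw^{-1}\delta) + (u, \pi'^2) = (u, vw^{-1}\delta)$; hence $D$ is isomorphic as a quaternion algebra over $L$ to one of the form covered by case (3) of \cref{qua} over $S$, and the previous step applies. The case $\lambda = w\delta$ is symmetric. The main obstacle I anticipate is ensuring that, after identifying $D$ with the reduced quaternion algebra over $L$, the maximal $S$-order and the involution we started with are carried to compatible structures so that $\pi_D, \delta_D$ can be chosen inside our $\Lambda$; this is where uniqueness of maximal orders (\cref{unique-max}) together with a direct matching of generators under the Brauer-equivalence isomorphism does the work. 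The remainder is the bookkeeping tabulation analogous to the table in the proof of \cref{parameters}, extended by the two-variable substitution $\pi \mapsto \pi'$, $\delta \mapsto \delta'$.
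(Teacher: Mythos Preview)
Your overall strategy matches the paper's, but there is a genuine gap in the construction of the maximal order. The order $\Lambda=\Lambda_0\otimes_R S$ is \emph{not} in general a maximal $S$-order in $D$. The failure occurs exactly when $L/K$ and $D_0$ are both ramified at the same prime of $R$. Take for instance $\lambda=w\pi$ and $D_0=(u,v\pi)$, so $\pi'=\sqrt{w\pi}$ and $\pi=w^{-1}\pi'^2$ in $S$. Then $D\simeq(u,vw^{-1})$ over $L$ is \emph{unramified} at $(\pi')$, so the maximal $S_{(\pi')}$-order must be Azumaya. But in $(\Lambda_0\otimes_R S)_{(\pi')}$ the generator $j_0\otimes 1$ satisfies $(j_0\otimes1)^2=v\pi\in\pi'^2 S_{(\pi')}$, hence reduces to $0$ modulo $(\pi')$; the element $(\pi')^{-1}(j_0\otimes1)$ is integral over $S_{(\pi')}$ yet does not lie in $\Lambda_0\otimes_R S$. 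So your appeal to ``locally at height one primes of $S$ exactly as in the proof of \cref{qua-order}'' breaks down: the hypothesis $v(b)\in\{0,1\}$ of \cref{ord} fails because $v_{(\pi')}(v\pi)=2$.

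The paper repairs this by \emph{not} taking $\Lambda_0\otimes_R S$. Instead it rewrites $D$ over $L$ with new generators $i,j$ built from $i_0,j_0$ and $\sqrt{\lambda}$ (e.g.\ $j=\frac{1}{\pi}(j_0\otimes\sqrt{\lambda})$ in the example above, so that $j^2=vw^{-1}\in S^*$), checks that $i^2,j^2$ are each a unit in $S$ times at most a first power of $\pi'$ or $\delta'$, and then sets $\Lambda=S+Si+Sj+Sij$; \cref{qua-order} now applies directly over $S$. This choice also makes $\sigma(i)=\pm i$, $\sigma(j)=\pm j$ transparent, since $\iota(\sqrt{\lambda})=-\sqrt{\lambda}$. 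The same rescaling handles $D_0=(u,v\pi\delta)$ uniformly, without needing the separate Brauer-equivalence argument you sketch (which, as you yourself note, leaves the compatibility of orders and involutions unresolved). Finally, your case split for $\pi_D$ is incomplete: you only say what to do when $L/K$ is unramified at $(\pi)$ or when $\pi_{D_0}\in R$, but the problematic cases above have $L/K$ ramified at $(\pi)$ and $\pi_{D_0}=j_0\notin R$.
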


\begin{proof}
By \cref{qua}, $D_0= (u, v)$, $(u, v\pi)$, $(u, v\delta)$, $(u\pi, v\delta)$ or $(u, v\pi\delta)$ for some $u, v \in R^*$. 
If  $D_0 = (a, b)$, then let $i_0, j_0 \in D_0$  with $i_0^2  = a$, $j_0^2 = b$ and $i_0j_0=-j_0i_0$.
 
There are 3 possible shapes for $\lambda$, i.e. $w$, $w\pi$, $w\delta$ with $w$ a unit.
By the assumption that if $\lambda = w$, then $D_0$ is not of the form $(u, v\pi\delta)$.
Since there are 5 possible shapes of $D_0$, we have $3*5-1=14$ possible combinations.
In each of the cases,  choose  $i$ and $ j$  as in the following two tables. 

\[
\begin{array}{|c||c|c|c|c|c|c|c|c|}
\hline
\lambda & w & w & w & w & w\pi & w\pi & w\delta & w\delta\\
\hline
D_0 & (u,v) & (u,v\pi) & (u,v\delta) & (u\pi,v\delta) & (u,v) & (u,v\delta) & (u,v) & (u,v\pi)\\
\hline
D &  (u,v) & (u,v\pi') & (u,v\delta') & (u\pi',v\delta') & (u,v) & (u,v\delta') & (u,v) & (u,v\pi')\\
\hline
i & \multicolumn{8}{c|}{i_0 \otimes 1}\\
\hline
j & \multicolumn{8}{c|}{j_0 \otimes 1}\\
\hline
\end{array}
\]

\begin{center}
\adjustbox{max width=\textwidth}{
\begin{tabular}{|c||c|c|c|c|c|c|}
\hline
$\lambda$ & $w\pi$ & $w\pi$ & $w\pi$ & $w\delta$ & $w\delta$ & $w\delta$\\
\hline
$D_0$ & $(u,v\pi)$ & $(u\pi,v\delta)$ & $(u,v\pi\delta)$ & $(u,v\delta)$ & $(u\pi,v\delta)$ & $(u,v\pi\delta)$\\
\hline
$D$ & $(u,vw)$ & $(uw,v\delta')$ & $(u,vw\delta')$ & $(u,vw)$ & $(u\pi',vw)$ & $(u,vw\pi')$\\
\hline
$i$ &  $i_0\otimes 1$ & $\frac{1}{\pi}(i_0 \otimes \sqrt{\lambda})$ & $i_0 \otimes 1$  & $i_0 \otimes 1$  & $i_0 \otimes 1$  & $i_0 \otimes 1$  \\
\hline
$j$  & $\frac{1}{\pi}(j_0 \otimes \sqrt{\lambda})$ & $j_0 \otimes 1$ & $\frac{1}{\pi}(j_0 \otimes \sqrt{\lambda})$ 
& $\frac{1}{\delta}(j_0 \otimes \sqrt{\lambda})$
& $\frac{1}{\delta}(j_0 \otimes \sqrt{\lambda})$
& $\frac{1}{\delta}(j_0 \otimes \sqrt{\lambda})$
\\
\hline
\end{tabular}
}

\end{center}
Then it can be checked that  $\pi'$ and $\delta'$ are the only primes in $S$ which might 
divide   $i^2, j^2 \in L$.
Let  $\Lambda=S+Si+Sj+Sij$. Then,  by \cref{qua-order},    $\Lambda$ is a maximal $S$-order  of $D$.
By the choice if $i$ and $j$ we have $\sigma(i) = \pm i$ and $\sigma(j) = \pm j$.
Since $\sigma(S) = S$,   $\sigma(\Lambda) = \Lambda$. 
 
Let  $\pi_D, \delta_D \in \Lambda$  be  as in the proof of \cref{parameters}.  
 Then  $\Lambda$, $\pi_D$ and $\delta_D$ satisfy required properties (1) and (2). 
\end{proof}

\subsection{Local results}
 
\begin{cor}\label{7.47} 
Let $D$ be a quaternion division algebra over $K$ with $\sigma$ the canonical involution and $h$ an $\varepsilon$-hermitian space over $(D, \sigma)$. 
Suppose that $D$ is unramified at all height one prime ideals of $R$ except possibly at $(\pi)$, $(\delta)$ and
$D$ is not of the shape of \cref{qua}(5). 
Let $\Lambda$ be the maximal order as in \cref{parameters}.
Suppose $h$ has a diagonal form $\langle a_1, \cdots, a_n\rangle$ such that $a_i\in \Lambda$ and $\Nrd_{D/K}(a_i)$ is a unit of $R$ times a power of $\pi$ and a power of $\delta$. 
If $h\otimes_K1_{K_{\pi}}$ is isotropic over $(D\otimes_KK_{\pi}, \sigma\otimes_K\Id_{K_{\pi}})$ or $h\otimes_K1_{K_{\delta}}$ is isotropic over $(D\otimes_KK_{\delta}, \sigma\otimes_K\Id_{K_{\delta}})$, then $h$ is isotropic over $(D, \sigma)$. 
\end{cor}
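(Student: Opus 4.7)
The plan is to reduce \cref{7.47} directly to \cref{new} by checking that, for a quaternion algebra $D$ with canonical involution not of shape \cref{qua}(5), all the structural hypotheses of \cref{decomp} are available. Since $D$ is a quaternion algebra, $\deg(D) = d = 2$, and because $\Char(K) \ne 2$, the element $-1$ serves as a primitive $d$-th root of unity in $K$, so the root-of-unity hypothesis of \cref{decomp} is automatic. The canonical involution $\sigma$ is of the first kind with $Z(D) = K = Z(D)^{\sigma}$, so in the notation of \cref{decomp} we take $L = K$ and $S = R$.

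Next, by \cref{qua}, excluding case (5) leaves only the four shapes $(u,v)$, $(u, v\pi)$, $(u, v\delta)$, and $(u\pi, v\delta)$. In each of these, \cref{qua-order} shows that $\Lambda = R + Ri + Rj + Rij$ is the (unique) maximal $R$-order in $D$; and since $\sigma$ is the canonical involution, sending $i \mapsto -i$ and $j \mapsto -j$, we have $\sigma(\Lambda) = \Lambda$. \cref{parameters} then supplies elements $\pi_D, \delta_D \in \Lambda$ verifying conditions (1) and (2) of \cref{decomp}, the ramification indices $e_0, e_1 \in \{1, 2\}$ being automatically invertible in $k$ since $\Char(k) \ne 2$.

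With these ingredients in place, $D$, $\sigma$, $\Lambda$, $\pi_D$, $\delta_D$ satisfy all of the hypotheses of \cref{decomp}, and the assumption on the reduced norms of the $a_i$ matches the hypothesis of \cref{decomp2}. Thus the hypotheses of \cref{new} hold, and invoking \cref{new} directly yields that $h$ is isotropic over $(D, \sigma)$. The proof is short because the real work has already been done in \cref{decomp2}, \cref{parameters} and \cref{new}; the main point to watch is simply that case \cref{qua}(5) is precisely the configuration for which \cref{parameters} was not established, which is why it is excluded from the hypothesis.
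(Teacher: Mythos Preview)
Your proof is correct and follows exactly the same approach as the paper, which simply records ``Follows from \cref{parameters} and \cref{new}.'' You have merely spelled out in detail why the hypotheses of \cref{decomp} (and hence of \cref{decomp2} and \cref{new}) are met---the root-of-unity condition, $\sigma(\Lambda)=\Lambda$, and the existence of $\pi_D,\delta_D$ from \cref{parameters}---which the paper leaves implicit.
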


\begin{proof}
Follows from \cref{parameters} and \cref{new}. 
\end{proof}

\begin{cor}\label{7.48}
Let $L = K(\sqrt{\lambda})$, $\lambda = w$, $w\pi$ or $w\delta $ for $w \in R^*$. 
Let $S$ be the integral closure of $R$ in $L$ and the maximal ideal $m' = (\pi', \delta')$ of $S$ as above.
Let $D_0$ be a quaternion division algebra over $K$ having one of the forms of \cref{qua} and $\sigma_0$ the canonical involution on $D_0$. 
When $D_0 = (u, v\pi\delta)$, we suppose that $\lambda$ is not a unit in $R$.
Let $\iota$ be the non-trivial automorphism of $L/K$. 
Let $D=D_0\otimes_KL$ and $\sigma = \sigma_0 \otimes_K \iota$. Suppose that 
$D$ is division.
Let $\Lambda$ be the maximal order as in \cref{parameters2}.
Let $h$ be an $\varepsilon$-hermitian space over $(D, \sigma)$. 
Suppose $h$ has a diagonal form $\langle a_1, \cdots, a_n\rangle$ such that $a_i\in \Lambda$ and $\Nrd_{D/L}(a_i)$ is a unit of $S$ times a power of $\pi'$ and a power of $\delta'$. 
If $h\otimes_K1_{K_{\pi}}$ is isotropic over $K_{\pi}$ or $h\otimes_K1_{K_{\delta}}$ is isotropic over $K_{\delta}$, then $h$ is isotropic over $K$. 
\end{cor}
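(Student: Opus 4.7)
The plan is to execute essentially the same two-step proof as for \cref{7.47}, now in the second-kind-involution setting: invoke \cref{parameters2} to produce the right maximal order and parameters, and then quote \cref{new}. The main burden is already discharged in those two results, so this corollary is a packaging statement.

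First I would verify that the hypotheses of \cref{decomp} are in force. Since $D=D_0\otimes_KL$ is a quaternion algebra over $L$, its degree equals $d=2$, and because $\Char K\ne 2$ the primitive $2$nd root of unity $-1$ already lies in $K$, so the ``root-of-unity'' hypothesis of \cref{decomp} is automatic. The involution $\sigma=\sigma_0\otimes_K\iota$ is of the second kind with $L^{\sigma}=K$. Applying \cref{parameters2} to $D_0$, $L$, $\sigma_0$, $\iota$ (with the required proviso that $\lambda$ is not a unit when $D_0=(u,v\pi\delta)$, which is part of the hypothesis here) yields a maximal $S$-order $\Lambda\subset D$ with $\sigma(\Lambda)=\Lambda$, together with elements $\pi_D,\delta_D\in\Lambda$ satisfying the reduced-norm factorizations $\Nrd_{D/L}(\pi_D)=u_0\pi'^{2/e_0}$ and $\Nrd_{D/L}(\delta_D)=u_1\delta'^{2/e_1}$ with $u_0,u_1\in S^*$ and $e_0,e_1\in\{1,2\}$ (necessarily invertible in the residue field since $\Char k\ne 2$), and the sign relations $\sigma(\pi_D)=\pm\pi_D$, $\sigma(\delta_D)=\pm\delta_D$, $\pi_D\delta_D=\pm\delta_D\pi_D$. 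These are exactly conditions (1) and (2) of \cref{decomp}.

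Next I would check that the assumption on $h$ in the statement matches the hypothesis on the diagonal entries used in \cref{decomp2} and \cref{new}: the $a_i$ lie in $\Lambda$ and each $\Nrd_{D/L}(a_i)$ is a unit of $S$ times a power of $\pi'$ and a power of $\delta'$. With this in hand, \cref{new} (which is formulated ``under all hypotheses of \cref{decomp2}'') applies directly: if $h\otimes_K 1_{K_{\pi}}$ is isotropic over $(D\otimes_KK_{\pi},\sigma\otimes_K\Id_{K_{\pi}})$ or $h\otimes_K 1_{K_{\delta}}$ is isotropic over $(D\otimes_KK_{\delta},\sigma\otimes_K\Id_{K_{\delta}})$, then $h$ is already isotropic over $(D,\sigma)$.

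I do not anticipate any real obstacle. The nontrivial content—the two-variable application of Larmour's theorem to split off the four summands adapted to $\pi_D$ and $\delta_D$, and the lifting of isotropy from the residue division algebra via the Azumaya/henselization machinery of \cite{Knus}—is already absorbed into \cref{new}; the construction of $\Lambda$, $\pi_D$, $\delta_D$ for each of the $14$ combinations of $\lambda$ and $D_0$ is absorbed into \cref{parameters2}. The only small thing to be mindful of is the explicit exclusion of the case ``$\lambda$ is a unit and $D_0=(u,v\pi\delta)$,'' but this is already stated as a hypothesis of the corollary and is needed precisely so that \cref{parameters2} can be invoked.
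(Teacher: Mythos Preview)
Your proposal is correct and matches the paper's own proof exactly: the paper simply records that the corollary ``follows from \cref{parameters2} and \cref{new},'' and your write-up spells out precisely how those two ingredients slot together.
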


\begin{proof}
Follows from \cref{parameters2} and \cref{new}. 
\end{proof}

The next corollary is for $\sigma$ of the first kind. 

\begin{cor}\label{local-Z}
Under the hypotheses of \cref{7.47}, let $X$ be a projective homogeneous space under $G=\SU(D,\sigma, h)$ 
over $K$. 
If $X(K_{\pi})\ne\emptyset$ or $X(K_{\delta})\ne\emptyset$, then $X(K)\ne \emptyset$.
\end{cor}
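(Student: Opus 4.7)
The plan is to reduce, via the classification \eqref{phs} and \cref{2.1}, to the question of whether a totally isotropic $D$-submodule of $V$ of reduced dimension $n_r$ (in the designated connected component, in the $X^{\pm}$ case) exists over $K$ given that one exists over $K_\pi$ or $K_\delta$. The index divisibility side of \cref{2.1} is immediate: $D\otimes_K K_\pi$ is a quaternion division algebra (by the argument of \cite[2.4]{RS}, which applies since $D$ has degree $2$ and $K$ contains $-1$), so $\Ind(D)=\Ind(D_{K_\pi})=2$, and the condition $\Ind(D_{K_\pi})\mid\gcd\{n_1,\dots,n_r\}$ is equivalent to $\Ind(D)\mid\gcd\{n_1,\dots,n_r\}$. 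In particular $n_r=2m$ is even.

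The core geometric step is an induction on $m$: the claim to prove is that if $h_{K_\pi}$ has Witt index at least $m$, then so does $h$ over $K$. The case $m=0$ is trivial; for $m\ge 1$, the hypothesis forces $h_{K_\pi}$ to be isotropic, whence \cref{7.47} yields that $h$ is itself isotropic over $K$. Writing $h\simeq \mathbb{H}\perp h'$ for a hyperbolic plane $\mathbb{H}$, Witt cancellation gives that $h'_{K_\pi}$ has Witt index at least $m-1$, and the inductive hypothesis applied to $h'$ produces the required totally isotropic submodule over $K$.

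For $X=X^{\pm}(n_1,\dots,n_r)$ with $\ad_h$ orthogonal, $\disc(h)=1$ and $n_r=n$, the induction drives the Witt index of $h$ up to $n$, so $h$ becomes hyperbolic over $K$ (since $\dim_D V=n$ and $\disc(h)=1$ force the anisotropic kernel to be trivial). Because $D$ is division, $A=D$ is non-split, so by \cref{pmkey} at most one of $X^+(K_\pi)$, $X^-(K_\pi)$ is non-empty; the matching of signs over $K$ and $K_\pi$ then follows from \cref{2.4}, applied to the Morita-equivalent situation, together with the observation that a maximal totally isotropic submodule of $h$ over $K$ must lie in the same component of $X_n^{\pm}$ when base-changed to $K_\pi$.

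The main obstacle is to verify, at each inductive step, that the residual form $h'$ still satisfies the diagonal-form hypothesis of \cref{7.47}, namely that $h'\simeq\langle a'_1,\dots,a'_{n-2}\rangle$ with $a'_i\in\Lambda$ and $\Nrd_{D/K}(a'_i)$ equal to a unit of $R$ times a power of $\pi$ and a power of $\delta$. My approach is to construct the hyperbolic summand $\mathbb{H}$ starting from an isotropic vector obtained by lifting a residual isotropic vector as in the proof of \cref{new} (via Larmour's decomposition \cref{Lamour}), so that $\mathbb{H}$ is spanned by primitive elements of the underlying $\Lambda$-lattice defining $h$. The orthogonal complement of such an integrally defined hyperbolic plane inherits a natural $\Lambda$-lattice structure, from which a diagonalisation of $h'$ with entries in $\Lambda^*$ times powers of $\pi_D, \delta_D$ (in the sense of \cref{decomp2}) can be extracted; this is what makes the induction go through.
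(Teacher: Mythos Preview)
Your approach is essentially the paper's: reduce via \eqref{phs} and \cref{2.1} to the question of Witt index, use \cref{7.47} to obtain isotropy, split off a hyperbolic plane, and induct; the $X^{\pm}$ case is handled exactly as in the paper via \cref{pmkey} and the fact that $D_{K_\pi}$ remains division.

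Where you diverge is in explicitly flagging, and attempting to repair, a point the paper passes over: the induction replaces $h$ by $h'=h\perp(-\mathbb{H})$ and then invokes the inductive hypothesis, which presupposes that $h'$ again satisfies the diagonal hypothesis of \cref{7.47}. The paper simply writes ``by induction'' without checking this. Your idea of producing the isotropic vector inside the $\Lambda$-lattice (as in the proof of \cref{new}) and splitting off the hyperbolic plane inside one of the four blocks $h_{ij}$ of \cref{decomp2} is the right one: since each $h_{ij}$ is a unimodular hermitian space over $(\Lambda,\Int(\delta_D^i\pi_D^j)\circ\sigma|_\Lambda)$ and $\Lambda$ is complete semilocal with $2\in\Lambda^*$, an isotropic unimodular $\Lambda$-form splits a hyperbolic $\Lambda$-plane and the orthogonal complement $h_{ij}'$ is again unimodular over $\Lambda$. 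The one step you leave vague is the claim that $h_{ij}'$ can then be diagonalised with entries in $\Lambda^*$; this is not automatic for arbitrary orders, but here it follows because $\Lambda$ is finite over the complete local ring $R$, so hermitian forms over $\Lambda$ lift from $\Lambda/\mathrm{Jac}(\Lambda)$ and can be diagonalised there (cf.\ \cite[ch.~II, 4.6]{Knus}). With that justification supplied, your argument is complete and in fact slightly more careful than the paper's.
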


\begin{proof}
First we assume that $X$ is of the shape of the 2nd, 3rd, 4th or first part of the 5th case of \cref{phs}.

By \cite[2.4]{RS}, $\Ind(D)=\Ind(D\otimes_KK_{\pi})=\Ind(D\otimes_KK_{\delta})$. 
Then $\Ind(D\otimes_KK_{\pi})|g$ iff $\Ind(D\otimes_KK_{\delta})|g$
iff $\Ind(D)|g$, where $g=\gcd\{n_1,\cdots,n_r\}$.
Let $t=n_r$. 
By \cref{2.1}, it suffices to show that if $X_t(K_{\pi})\ne\emptyset$ or $X_t(K_{\delta})\ne\emptyset$, then $X_t(K)\ne \emptyset$. 
Suppose $X_t(K_{\pi})\ne\emptyset$. 
Then $h_{K_{\pi}}$ has a totally isotropic subspace of reduced dimension $t$, where $t$ is even. 
Then $h_{K_{\pi}}$ is isotropic over $D$. 
So, by \cref{7.47}, $h:V\times V\to D$ is isotropic over $D$. 
Let $x\in V$, $x\ne 0$ be an isotropic vector of $h$. 
Let $i_W$ denote the Witt index. 
Then $2\le\rdim_D(xD)\le t\le 2i_W(h_{K_{\pi}})$, where $\rdim_D(xD)$ is even. 

We induct on $t$. 
If $t=2$, then $\rdim_D(xD)=2$, we have $xD\in X_t(K)$ and hence $X_t(K)\ne \emptyset$. 
%

Now we suppose $t>2$. 
If $\rdim_D(xD)=t$, then $xD\in X_t(K)$ and hence $X_t(K)\ne \emptyset$. 
If $\rdim_D(xD)<t$, by \cite[ch.1, 3.7.4]{Knus}, there exists a hyperbolic plane $\mathbb H\subset (V,h)$ such that $x\in \mathbb H$ and $h=h'\perp\mathbb H$. 
Then by \cite[p.73]{inv}, $2i_W(h'_{K_{\pi}})=2i_W(h_{K_{\pi}})-2\ge 2i_W(h_{K_{\pi}})-\rdim_D(xD)\ge t-\rdim_D(xD)>0$. 
Write $X'_t$ for the corresponding projective homogeneous variety under $\SU(D, \sigma, h')$ over $K$. 
Then $X'_{t-\rdim_D(xD)}(K_{\pi})\ne\emptyset$. 
Since $t-\rdim_D(xD)<t$, by induction, we have $X'_{t-\rdim_D(xD)}(K)\ne\emptyset$. 
Suppose $N\in X'_{t-\rdim_D(xD)}(K)$. 
Then $N\oplus xD\in X_{t}(K)$. 
Hence $X_{t}(K)\ne\emptyset$. 

Therefore $X_t(K_{\pi})\ne \emptyset$ implies $X_t(K)\ne \emptyset$. 
Similarly, $X_t(K_{\delta})\ne \emptyset$ implies $X_t(K)\ne \emptyset$. 

Next we assume that $X$ is of the shape of the second part of the 5th case of \cref{phs}, now $t=n=n_r$. 
We need to prove the following

Subcase ($+$): If $X_n^+(K_{\pi})\ne\emptyset$ or $X_n^+(K_{\delta})\ne\emptyset$, then $X_n^+(K)\ne \emptyset$;  

Subcase ($-$): If $X_n^-(K_{\pi})\ne\emptyset$ or $X_n^-(K_{\delta})\ne\emptyset$, then $X_n^-(K)\ne \emptyset$. 

Suppose $X_n^+(K_{\pi})\ne\emptyset$. 
Then $h_{K_{\pi}}$ is hyperbolic. 
By \cref{7.47} with Witt decomposition, Witt cancellation and induction, $h$ is hyperbolic. 
Then $X_n(K)=X_n^+(K)\sqcup X_n^-(K)\ne\emptyset$. 
If $X_n^+(K)\ne\emptyset$ we are done. 
If $X_n^-(K)\ne\emptyset$, then $X_n^-(K_{\pi})\ne\emptyset$. 
Then both $X_n^+(K_{\pi})\ne\emptyset$ and $X_n^-(K_{\pi})\ne\emptyset$. 
By \cref{pmkey}, we have $D_{K_{\pi}}$ is split. 
By \cite[2.4]{RS}, $D$ is split over $K$, a contradiction to our assumption that $D$ is division. 
Hence, $X_n^+(K)\ne\emptyset$ and $X_n^-(K)=\emptyset$. 

The proof for the subcase ($-$) is similar. 
\end{proof}

The next corollary is for $\sigma$ of the second kind. 

\begin{cor}\label{local-Z2}
Under the hypotheses of \cref{7.48}, let $X$ be a projective homogeneous space under $G=\U(D,\sigma, h)$ over $K$ (see the first case of \cref{phs}). 
If $X(K_{\pi})\ne\emptyset$ or $X(K_{\delta})\ne\emptyset$, then $X(K)\ne \emptyset$.
\end{cor}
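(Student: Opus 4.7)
The plan is to mirror the proof of \cref{local-Z} with the key simplification that, since $\sigma$ is of the second kind, the classification \eqref{phs} places $X$ in the first case only, namely $X=X(n_1,\ldots,n_r)$ with $n_r<\lfloor n/2\rfloor$. In particular there are no $X_n^\pm$ components, so the delicate splitting/discriminant argument at the end of \cref{local-Z} is not needed, and a single unified case suffices. The replacement of \cref{7.47} by \cref{7.48} is the only substantive change.

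Set $t=n_r$. By \cref{2.1}(1), $X_t(M)\ne\emptyset$ iff $h_M$ has a totally isotropic subspace of reduced dimension $t$ and $\Ind(D)\mid\gcd\{n_1,\ldots,n_r\}$. As in \cite[2.4]{RS}, the equality $\Ind(D)=\Ind(D\otimes_LL_{\pi'})=\Ind(D\otimes_LL_{\delta'})$ (which holds since $D$ is a quaternion algebra over $L$) transfers the divisibility condition between $K$ and its two completions. It therefore suffices to show that $X_t(K_\pi)\ne\emptyset$ implies $X_t(K)\ne\emptyset$; the case of $K_\delta$ is symmetric.

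Assume $X_t(K_\pi)\ne\emptyset$. Then $h_{K_\pi}$ is isotropic over $(D\otimes_K K_\pi,\sigma\otimes\Id)$, so by \cref{7.48} $h$ itself is isotropic over $(D,\sigma)$. Pick $0\ne x\in V$ with $h(x,x)=0$; then $xD$ is a totally isotropic $D$-subspace of reduced dimension $\Ind(D)=2$, and $t$ is even because $\Ind(D)=2$ divides it. I now induct on $t$. The base $t=2$ is immediate from $xD\in X_2(K)$. For $t>2$: if $\rdim_D(xD)=t$ we are done; otherwise $\rdim_D(xD)<t$, so by \cite[ch.~1, 3.7.4]{Knus} I may enlarge $xD$ to a hyperbolic plane $\mathbb H\subset V$ and write $h\simeq h'\perp\mathbb H$. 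Because $2\,i_W(h'_{K_\pi})\ge t-2>0$, the corresponding variety $X'_{t-2}$ under $\U(D,\sigma,h')$ has a $K_\pi$-point, and by induction a $K$-point $N$; then $N\oplus xD\in X_t(K)$.

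The main work is already absorbed into \cref{7.48}, which relies on the $\sigma$-invariant maximal $S$-order $\Lambda$ and parameters $\pi_D,\delta_D$ built in \cref{parameters2}. The only subtlety to verify in the induction is that the hypotheses of \cref{7.48}---a diagonal form with entries in $\Lambda$ and reduced norms of the prescribed shape---descend from $h$ to $h'$; this is routine since the split hyperbolic plane contributes entries of reduced norm $\pm 1$, and Witt cancellation allows us to rediagonalize $h'$ preserving the form of the reduced norms. Beyond this bookkeeping, the argument is a straightforward Witt-cancellation induction.
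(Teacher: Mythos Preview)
Your proposal is correct and follows essentially the same route as the paper: the paper's own proof is the one-line remark that the argument is ``similar to the first half of \cref{local-Z} \ldots\ using \cref{7.48},'' and you have spelled out precisely that---reduce to $X_t$ via \cref{2.1} and the index equality from \cite[2.4]{RS}, invoke \cref{7.48} in place of \cref{7.47} to get isotropy of $h$, then run the Witt-cancellation induction, noting that the unitary case avoids the $X_n^{\pm}$ subtleties. Your closing remark about the hypotheses of \cref{7.48} descending to $h'$ flags a bookkeeping point the paper itself leaves implicit; the mechanism you name (``Witt cancellation'') is not quite the right tool, but the issue is minor and shared with the paper's treatment of \cref{local-Z}.
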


\begin{proof}
The proof is similar to the first half of \cref{local-Z} (for the 2nd, 3rd, 4th and the first part of the 5th cases of \cref{phs}), using \cref{7.48}.
\end{proof}

\begin{cor}\label{local-Z3}
	Under the hypotheses of \cref{new2}, let $X$ be a projective homogeneous space under $G=\U(L,\iota, h)$ over $K$. 
	If $X(K_{\pi})\ne\emptyset$ or $X(K_{\delta})\ne\emptyset$, then $X(K)\ne \emptyset$.
\end{cor}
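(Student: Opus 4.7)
The plan is to mimic the first part of the proof of \cref{local-Z}, substituting \cref{new2} for \cref{7.47}. Since $G = \U(L,\iota,h)$ is a unitary group, \cref{phs} forces $X = X(n_1, \ldots, n_r)$ for some $0 < n_1 < \cdots < n_r \le n$ with $n_r < \lfloor n/2 \rfloor$, so no $\pm$-component complication arises. Setting $t := n_r$ and applying \cref{2.1}---the divisibility hypothesis being automatic since $\Ind(L \otimes_K K') = 1$ for every field extension $K'/K$---the corollary reduces to the implication
\[
X_t(K_\pi) \ne \emptyset \text{ or } X_t(K_\delta) \ne \emptyset \implies X_t(K) \ne \emptyset.
\]

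Suppose $X_t(K_\pi) \ne \emptyset$; the case at $(\delta)$ is handled symmetrically. Then $h_{K_\pi}$ contains a totally isotropic subspace of reduced dimension $t \ge 1$ and is in particular isotropic over $(L \otimes_K K_\pi, \iota \otimes \Id)$, so \cref{new2} produces a nonzero isotropic vector $x \in V$. The line $W := xL$ is totally isotropic of reduced dimension $1$. I would then induct on $t$: the base case $t = 1$ is immediate because $W \in X_1(K)$. For $t > 1$, Witt's theorem \cite[ch.~I, 3.7.4]{Knus} yields a hyperbolic plane $\mathbb{H} \supset W$ with $h = h' \perp \mathbb{H}$, and the Witt indices satisfy $i_W(h'_{K_\pi}) \ge i_W(h_{K_\pi}) - 1 \ge t - 1 > 0$. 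Writing $X'_s$ for the analogous projective homogeneous variety under $\U(L, \iota, h')$, we obtain $X'_{t-1}(K_\pi) \ne \emptyset$, so the induction hypothesis applied to $h'$ supplies some $N \in X'_{t-1}(K)$, and then $N \oplus W \in X_t(K)$.

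The main technical obstacle, paralleling the analogous steps in \cref{local-Z} and \cref{local-Z2}, is ensuring that the integrality assumption of \cref{new2}---that each diagonal entry has divisor supported only along $\pi'$ and $\delta'$---persists when passing from $h$ to the residual form $h'$ obtained after cancelling a hyperbolic plane. Because $\mathbb{H}$ admits a diagonal form with unit entries, one may re-diagonalize $h'$ within the class of forms whose entries are units in $S$ times monomials in $\pi'$ and $\delta'$, so the hypothesis is preserved throughout the induction.
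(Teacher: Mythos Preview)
Your proposal is correct and follows essentially the same approach as the paper, whose proof is the single sentence ``The proof is similar to the first half of \cref{local-Z}, using \cref{new2}.'' You have accurately unpacked that sentence: restrict to the unitary case of \cref{phs} (no $\pm$ components), reduce via \cref{2.1} to $X_t$ with the index condition trivial since $A=L$ has index~$1$, and run the Witt-index induction from \cref{local-Z} with \cref{new2} in place of \cref{7.47}. Your final paragraph, flagging the need to keep the diagonal entries of $h'$ supported on $\pi',\delta'$ through the induction, identifies a subtlety the paper leaves implicit in \cref{local-Z} as well; your resolution is adequate for the commutative case $D=L$, since after Witt cancellation one may re-diagonalize over $S$ and scale each entry into $S$ with support on $\pi',\delta'$ up to squares.
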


\begin{proof}
	The proof is similar to the first half of \cref{local-Z}, using \cref{new2}.
\end{proof}
 
\section{The main theorem}\label{sec9}
In this section, we prove \cref{main-thm}. 
The next lemma deals with the last case of \cref{qua} to make it possible to apply \cref{parameters} in the proof of \cref{9.3}. 

\begin{lem}\label{9.1}
Let $R$ be a regular local ring with field of fractions $K$, maximal ideal $(\pi, \delta)$ and residue field $k$ with $\Char k\ne 2$. 
Suppose $\alpha = (u, v\pi\delta) \in {_2}\Br(K)$.
Let $\mathscr X = \Proj (R[x, y]/(\pi x - \delta y))\to \Spec(R)$ be the blow-up of $\Spec(R)$ at its maximal ideal. 
For every closed point $Q$ of $\mathscr X$, let $\mathfrak m_Q$ be the maximal ideal of $\mathscr O_{\mathscr X, Q}$. 
Then $\alpha = (u, t)$ for $t\in \mathscr O_{\mathscr X, Q}$ such that $t$ is either a unit or a 
regular parameter (i.e. $t\in \mathfrak m_Q- \mathfrak m_Q^2$).
\end{lem}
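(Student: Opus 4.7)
The plan is to unwind the blow-up explicitly in its two standard affine charts, observe that every closed point of $\mathscr X$ lies on the exceptional divisor, and then at each such point reduce $\alpha=(u,v\pi\delta)$ to a symbol $(u,t)$ of the required form by using the elementary identity $(u,c^{2})=0$ in ${_2}\Br(K)$.

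First I would note that since $R$ is local with unique closed point $\mathfrak m=(\pi,\delta)$, every closed point of $\mathscr X$ lies on the exceptional fibre $E\simeq\mathbb P^{1}_{k}$. Cover $\mathscr X$ by the two standard affine opens. On $U_{1}:=D_{+}(x)$, setting $s=y/x$, the coordinate ring is $R[s]/(\pi-\delta s)$, so
\[
\pi=\delta s\quad\text{and hence}\quad \pi\delta=\delta^{2}s\ \text{in }\mathscr O_{\mathscr X}(U_{1}).
\]
Symmetrically, on $U_{2}:=D_{+}(y)$ with $r=x/y$ one has $\delta=\pi r$ and $\pi\delta=\pi^{2}r$. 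The closed points of $E\cap U_{1}$ are the primes $(\delta,s-a)$ for $a\in\overline k$, and similarly $(\pi,r-b)$ in $U_{2}$.

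Next I would carry out the case analysis. At a closed point $Q=(\delta,s-a)\in U_{1}$ with $a\neq 0$, the element $s$ is a unit in $\mathscr O_{\mathscr X,Q}$ (its image in the residue field is $a\neq 0$), so
\[
\alpha=(u,v\pi\delta)=(u,v\delta^{2}s)=(u,vs),
\]
and $t:=vs$ is a unit. At the special point $Q=(\delta,s)\in U_{1}$ (the case $a=0$), the same calculation gives $\alpha=(u,vs)$; here $v\in R^{*}$ while $s$ is a regular parameter in the two-dimensional regular local ring $\mathscr O_{\mathscr X,Q}$, so $vs$ is a regular parameter as well (a unit times a regular parameter is a regular parameter). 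Note that at this point $\pi=\delta s\in\mathfrak m_{Q}^{2}$, which is precisely why we had to blow up in order to reach this form. The analysis on $U_{2}$ is completely symmetric, with $t=vr$ a unit at $(\pi,r-b)$, $b\neq 0$, and $t=vr$ a regular parameter at $(\pi,r)$.

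I do not expect any serious obstacle. The only thing to keep straight is that $\alpha\in{_2}\Br(K)$, so $(u,c^{2})=0$ is legitimate as the image of the split quaternion algebra $(u,c^{2})_{K}$, which is what lets us strip off $\delta^{2}$ (respectively $\pi^{2}$) from the second slot; once this is done, writing down the requisite $t$ in each of the four local models is immediate.
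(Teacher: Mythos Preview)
Your argument is correct and follows essentially the same route as the paper: pass to the two affine charts of the blow-up, use $\pi\delta=\delta^{2}s$ (respectively $\pi^{2}r$) to strip off a square via $(u,c^{2})=0$, and observe that the remaining factor $vs$ (respectively $vr$) is either a unit or a regular parameter depending on the point. One minor imprecision: the closed points of $E\cap U_{1}\cong\mathbb{A}^{1}_{k}$ are not all of the form $(\delta,s-a)$ unless $k$ is algebraically closed; in general they correspond to monic irreducible polynomials $f(s)\in k[s]$. This does not affect your argument, since the only closed point at which $s$ lies in the maximal ideal is still $(\delta,s)$, and at every other closed point $s$ is a unit.
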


\begin{proof} 
Let $Q_1$ be the closed point given by the homogeneous ideal 
$(\pi, x,y)$ and $Q_2$ the closed point given by the homogeneous ideal $(\delta, x,y)$. 
Let $t = \frac{x}{y} \in K$. 
Then $\delta = t\pi$ in $K$. 
Hence at $Q_1$, $t$ is a regular parameter and $\alpha = (u, v\pi\delta) = (u, vt\pi^2) = (u, t)$.
Similarly, at $Q_2$, $1/t$ is a regular parameter and $\alpha = (u, 1/t)$.
Let $Q$ be a closed point of $\mathscr X$ that is neither $Q_1$ nor $Q_2$. 
Then at $Q$, $t$ is a unit and $\alpha = (u, t)$. 
\end{proof}

The next lemma deals with $\lambda$ from \cref{parameters2} to make it possible to apply \cref{parameters} in the proof of \cref{9.3}. 

\begin{lem}\label{9.2}
Let $R$ be a regular local ring  of dimension 2 with field of fractions $K$ 
and residue field $k$ with $\Char k \ne 2$. 
Let $\lambda \in K$ and $\alpha \in {_2}\Br(K)$. 
Then there exists a sequence of blow-ups $\mathscr X \to \Spec(R)$ such that for every closed point $P$ of $\mathscr X$,
the maximal ideal $\mathfrak m_P$ of $\mathscr O_{\mathscr X, P}$ is given by $\mathfrak m_P=(\pi, \delta)$, $\lambda = w$, $w\pi$ or $w\delta$, up to squares for $u\in \mathscr O_{\mathscr X,P}^*$ and $\alpha = \alpha' + \alpha''$ with $\alpha'$ and $\alpha''$ as in \cref{class2}. 
Furthermore, if $\alpha'' = (u, v\pi\delta)$ for units $u, v\in R^*$, then $\lambda\not\in \mathscr O_{\mathscr X,P}^*$, up to squares. 
\end{lem}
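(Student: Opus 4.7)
The plan is to construct the desired model in three stages of blow-ups, each addressing one of the defects that might be present at a closed point, and then check that the later stages do not undo the earlier ones. The underlying tool in the first stage is Lipman's embedded resolution of singularities for arithmetic surfaces, which provides a regular model on which the reduced support of $\operatorname{div}(\lambda)+\operatorname{ram}(\alpha)$ has only simple normal crossings. At every closed point $P_1$ of the resulting model $\mathscr X_1$ we may choose regular parameters $(\pi,\delta)$ supporting this divisor, so that $\lambda\equiv w,\,w\pi,\,w\delta$ or $w\pi\delta$ modulo squares, and by Saltman's classification (\cref{class2}) $\alpha=\alpha'+\alpha''$ has the required shape.

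The second stage eliminates the offending $\lambda\equiv w\pi\delta$ case. There are only finitely many closed points of $\mathscr X_1$ where this happens, and I blow them up to get $\mathscr X_2\to\mathscr X_1$. On a standard chart $u=\delta/\pi$ over such a point, $\lambda\equiv wu\pi^{2}\equiv wu$ modulo squares; at a closed point with parameters $(\pi,u-c)$ with $c\in k^{*}$ this becomes a unit, and at the origin $(\pi,u)$ it becomes $wu$ with $u$ a regular parameter. The symmetric chart is identical. So $\lambda$ now falls into one of the allowed shapes at every closed point of $\mathscr X_2$; the ramification divisor of $\alpha$ pulls back to a normal crossings divisor (after, if necessary, one additional round of Lipman to restore crossings along the new exceptional components, which does not reintroduce $\lambda\equiv w\pi\delta$ since $\lambda$ at those points is already resolved), so Saltman applies at every closed point of $\mathscr X_2$.

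The third stage targets the residual bad combination. Let $\mathcal B_2$ be the finite set of closed points of $\mathscr X_2$ at which simultaneously $\alpha''=(u,v\pi\delta)$ with $u,v\in R^{*}$ and $\lambda$ is a unit modulo squares. Blowing these up produces $\mathscr X=\mathscr X_3\to\mathscr X_2$, and by \cref{9.1}, above each $P_2\in\mathcal B_2$ the Brauer class satisfies $\alpha=(u,t)$ at every closed point, with $t$ a unit or a regular parameter. In particular $\alpha''$ is no longer of the excluded shape $(u,v\pi\delta)$. Meanwhile, $\lambda$ was a unit at $P_2$ and so remains a unit (hence congruent to $w$ modulo squares) at every closed point above, which is the shape required.

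The main obstacle, and the point that has to be checked carefully, is the non-interference between the stages: Step 3 must not reintroduce $\lambda\equiv w\pi\delta$ (true because $\lambda$ was a unit at every blown-up point of $\mathcal B_2$, hence is a unit at every new closed point), and Step 2 must not reintroduce the bad combination $(u,v\pi\delta)$ with $\lambda$ a unit (a case-by-case verification in each Saltman shape of $\alpha''$ after the substitution $\delta=u\pi$ on the chart of the blow-up). Once this is in hand, the three-stage construction terminates automatically, and every closed point of $\mathscr X=\mathscr X_3$ satisfies all four of the desired conditions.
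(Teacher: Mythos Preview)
Your three-stage construction---first resolve $\operatorname{div}(\lambda)+\operatorname{ram}(\alpha)$ to normal crossings, then blow up the finitely many points with $\lambda\equiv w\pi\delta$, then blow up the finitely many points where $\alpha''=(u,v\pi\delta)$ while $\lambda$ is a unit, invoking \cref{9.1} at the last step and checking that $\lambda$ stays a unit there---is exactly the paper's argument. The only superfluous piece is the parenthetical ``additional round of Lipman'' in your Stage~2: blowing up a closed point on a regular surface keeps a normal-crossings divisor normal-crossings in the total transform, so no further resolution is needed (and dropping it spares you the otherwise delicate claim that such extra blow-ups would not reintroduce $\lambda\equiv w\pi\delta$).
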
 
 
\begin{proof} 
By choosing a finite sequence of blow-ups $\mathscr X \to \Spec (R)$,
we may assume that for every closed point $P$ of $\mathscr X$, $\mathfrak m_P=(\pi, \delta)$, $\lambda = w$, $w\pi$, $w\delta$ or $w\pi\delta$, up to squares, for $w\in\mathscr O_{\mathscr X,P}^*$ and $\alpha$ is unramified at $P$ except possibly at $\pi$ and $\delta$. 
In fact, let $P$ be a closed point of $\mathscr X$ such that $\mathfrak m_P=(\pi, \delta)$ and $\lambda = w \pi\delta$ for some unit $w$ of $\mathscr O_{\mathscr X, P}$. 
Let $\mathscr X'$ be the blowup of $\mathscr X$ at $P$ and $Q$ a closed point on the exceptional curve. 
By \cref{9.1}, $\lambda = wt$ or $w'$, up to squares, for units $w$ and $w'$ and $t$ is either a unit or regular parameter.
Since there are only finitely many closed points on $\mathscr X$ with $\lambda = w\pi\delta$, 
we have a finite sequence of blowups $\mathscr X'\to \mathscr X$ such that for every closed point $P'$ of $\mathscr X'$, $\mathfrak m_{P'}$, $\lambda$ and $\alpha$ has the desired property at $P'$. 
In particular, $\alpha = \alpha' + \alpha''$ with $\alpha'$ and $\alpha''$ as in \cref{class2}.

Suppose there exists a closed point $P$ of $\mathscr X'$ such that 
$\alpha'' = (u, v\pi\delta)$ and $\lambda = w$, for $u, v, w \in \mathscr O_{\mathscr X',P}^*$.
Let $\mathscr X'' \to \mathscr X'$ be the blow-up at $P$ as in \cref{9.1}. 
Then for every closed point $Q$ of the exceptional curve of $\mathscr X''$, by \cref{9.1}, we have $\alpha'' = (u, v)$ or $(u, t)$ for a regular parameter $t$ at $Q$ and $u, v\in\mathscr O_{\mathscr X'',Q}^*$. 
Since $\lambda = w\in\mathscr O_{\mathscr X',P}^*$, it remains a
unit in $\mathscr O_{\mathscr X'',Q}^*$. 
Since there are only finitely many closed points with $\alpha'' =(u, v\pi\delta)$, we have the required sequence of blow-ups $\mathscr X''\to \Spec(R)$. 
\end{proof}

\begin{lem}  
\label{blowing_up}
Let $R$ be a regular local ring of dimension 2 with field of fractions $K$ 
and residue field $k$. Suppose $\Char k \ne 2$. 
Let $\widehat{R}$ be the completion of $R$ at its maximal ideal and
$\widehat{K}$ the field of fractions of $\widehat{R}$. 
Let $\mu \in \widehat{K}^*$.
Then there is a finite sequence of blow-ups $\mathscr X \to  \Spec(R)$ such that 
for every closed point $Q$ of  $\mathscr X \times_{\Spec{R}}\Spec(\widehat{R})$,
the maximal ideal at $Q$ is given by $(\pi, \delta)$ with the support of $\mu$ at $Q$
is at most $(\pi)$ and $(\delta)$. 
Also, either $(\pi)$ or $(\delta)$ corresponds to an exceptional curve 
in $\mathscr X$. 
\end{lem}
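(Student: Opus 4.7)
The plan is to apply Lipman's theorem on embedded resolution of singularities for two-dimensional excellent regular schemes to an ideal that encodes both $\mu$ and the exceptional structure, and then to lift the resulting sequence of blow-ups from $\Spec(\widehat R)$ to $\Spec(R)$ by flatness of completion.

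Writing $\mu = f/g$ with $f, g \in \widehat R \setminus \{0\}$ and letting $(\pi_0, \delta_0)$ generate the maximal ideal of $R$, I would apply Lipman's theorem to the ideal $J = (fg\pi_0\delta_0) \subset \widehat R$. Since $\widehat R$ is complete, hence excellent, and regular local of dimension two, this yields a finite sequence of blow-ups at closed points
\[\widehat{\mathscr X}_N \to \cdots \to \widehat{\mathscr X}_0 = \Spec(\widehat R)\]
such that the total transform of $V(J)$ on $\widehat{\mathscr X}_N$ has simple normal crossings support. In particular, at every closed point $Q$ of $\widehat{\mathscr X}_N$ the maximal ideal admits a regular system of parameters $(\pi, \delta)$ and the supports of $\mu$, $\pi_0$, $\delta_0$ all lie in $V(\pi) \cup V(\delta)$.

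Next, I would lift the sequence to $\Spec(R)$ using that $R \to \widehat R$ is faithfully flat, so blow-ups commute with base change to $\widehat R$. The first blow-up is at the unique closed point of each of $\Spec(\widehat R)$ and $\Spec(R)$. Inductively, each subsequent blow-up on the $\widehat R$-side is centered at a closed point of $\widehat{\mathscr X}_{i-1}$ which lies in its closed fiber over $\Spec(\widehat R)$; this closed fiber coincides with the closed fiber of $\mathscr X_{i-1} \to \Spec(R)$ (both are obtained by the same iterated blow-ups of $\Spec k$, since the closed fiber is invariant under the faithfully flat base change $R \to \widehat R$). Hence the center descends to a closed point of $\mathscr X_{i-1}$, and blowing up there gives $\mathscr X_i$ with $\widehat{\mathscr X}_i = \mathscr X_i \times_R \widehat R$. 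Setting $\mathscr X = \mathscr X_N$, we have $\mathscr X \times_R \widehat R = \widehat{\mathscr X}_N$.

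For the exceptional curve condition: after the first blow-up the strict transforms of $V(\pi_0)$ and $V(\delta_0)$ on $\mathscr X$ are disjoint (they meet the first exceptional curve at distinct points, and further blow-ups preserve this disjointness). Every closed point $Q$ of $\mathscr X \times_R \widehat R$ maps to the unique closed point of $\Spec(R)$ and thus lies in the closed fiber of $\mathscr X \to \Spec(R)$, which is the union of all exceptional curves of $\mathscr X \to \Spec(R)$. Therefore at least one exceptional curve passes through $Q$, and by the SNC property above, at most one further curve from the combined configuration (exceptional curves, strict transforms of $V(\pi_0), V(\delta_0)$, support of $\mu$) passes through $Q$. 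Choosing $\pi$ to cut out an exceptional curve through $Q$ yields a regular system of parameters with the support of $\mu$ contained in $V(\pi) \cup V(\delta)$ and one of $(\pi)$, $(\delta)$ corresponding to an exceptional curve. The main technical obstacle I anticipate is the lifting step, which is resolved by the identification of closed fibers under completion.
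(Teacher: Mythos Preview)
Your proof is correct and follows essentially the same strategy as the paper: resolve the support of $\mu$ to normal crossings on $\Spec(\widehat R)$ via Lipman/Abhyankar, then descend the sequence of blow-ups to $\Spec(R)$. The paper cites \cite[Prop.~3.6]{HHK5} for the descent step, whereas you argue directly via identification of closed fibers under completion; your argument is right, though the phrase ``iterated blow-ups of $\Spec k$'' is imprecise (the closed fibers are not obtained by blowing up $\Spec k$, but they do agree because $\widehat{\mathscr X}_{i-1}\times_{\widehat R}k=\mathscr X_{i-1}\times_R k$).

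The one real difference is in securing the exceptional-curve condition. You argue it is automatic: by properness every closed point of $\widehat{\mathscr X}$ lies on the closed fiber, which for $N\ge 1$ is the union of exceptional curves. The paper instead simply says: if at some closed point $Q$ neither $(\pi)$ nor $(\delta)$ is exceptional, blow up $Q$; the new exceptional curve is then one of the parameters and normal crossings are preserved. Your argument is slick but needs the small proviso that at least one blow-up has occurred (if $\mu$ already has SNC support, Lipman's procedure does nothing and there is no exceptional curve); the paper's extra-blow-up trick handles this uniformly. Your inclusion of $\pi_0\delta_0$ in $J$ is harmless but not needed for either argument.
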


\begin{proof} Since $\widehat{R}$ is a regular local ring of dimension 2, 
there exists a finite sequence of blow-ups  $\widehat{\mathscr X} \to \Spec{\widehat{R}}$  at the closed point of 
 $\Spec(\widehat{R})$ and closed points on the exceptional curves such that the 
support of $\mu$ on $\widehat{\mathscr X}$ is a union of regular curves with normal crossings \cite{Abh} or \cite{Lip75}. 
 Since any exceptional curve is the projective line over a finite extension of $k$, there exists a finite sequence of blow-ups $\mathscr X  \to \Spec(R)$
 such that $\mathscr X \times_{\Spec(R)} \Spec{\widehat{R}} = \widehat{\mathscr X}$
 (see \cite[prop.~3.6]{HHK5}).
 
 Let $Q$ be a closed point  of $\widehat{\mathscr X}$.
 Then, by the choice of $\widehat{\mathscr X}$, 
 the maximal ideal at $Q$ is given by $(\pi,\delta)$ and the support of $\mu$ at $Q$ is 
 at most $(\pi)$ and $(\delta)$. Suppose that neither $(\pi)$ nor $(\delta)$ is an 
 exceptional curve. Then blow-up $Q$. The resulting sequence of 
 blow-ups has required properties. 
 \end{proof}

\begin{thm}\label{9.3}
Let $K$ be a complete discrete valued field with residue field $k$, $\Char k\ne 2$. 
Let $F$ be the function field of a smooth, projective, geometrically integral curve over $K$.
Let $L/F$ be an extension of degree at most 2 and 
$A$  a finite-dimensional simple $F$-algebra with center $L$. 
Let $\sigma$ be an involution on $A$ such that $F=L^{\sigma}$.  
Let $h: V\times V\to A$ be an $\varepsilon$-hermitian space over $(A, \sigma)$ for $\varepsilon\in\{1, -1\}$. 
Let 
\[G(A, \sigma, h)=\left\{
\begin{array}{ll}
\SU(A,\sigma, h) & \text{if } \sigma \text{ is of the first kind;} \\
\U(A, \sigma, h) & \text{if } \sigma \text{ is of the second kind.}\\
\end{array}
\right.\]
 Suppose that for any regular proper model $\mathscr X$ of 
$F$ and for any closed point $P$ of $\mathscr X$
 $\Ind(A \otimes F_P)\le 2$. 
Then the Hasse principle holds for any projective homogeneous space under $G(A, \sigma, h)$.
\end{thm}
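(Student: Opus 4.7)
The plan is to apply the patching Hasse principle of Harbater, Hartmann and Krashen \cite[3.7]{HHK1}: since $G$ is a rational connected linear algebraic group (\cite[Lem.~5]{CP}), it suffices to exhibit a regular proper two-dimensional model $\mathscr{X}$ of $F$ together with a finite set $\mathcal{P}$ of closed points so that $X(F_P)\ne\emptyset$ for every $P\in\mathcal{P}$ and $X(F_U)\ne\emptyset$ for every connected component $U$ of $\mathscr{X}_1\setminus\mathcal{P}$. By \cref{2.2} I may replace $A$ by its underlying division algebra; the hypothesis $\Ind(A\otimes F_P)\le 2$ then forces $A\otimes F_P$ to be Brauer equivalent to $F_P$, to a quadratic extension of $F_P$, to a quaternion algebra over $F_P$, or to a quaternion algebra over a quadratic extension of $F_P$.

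First I would choose $\mathscr{X}$ so that $\mathscr{X}_1$ is a union of regular curves with only normal crossings containing both the ramification divisor of $[A]\in{}_2\Br(F)$ and the divisors of all diagonal entries of $h$. Applying \cref{9.1}, \cref{9.2} and \cref{blowing_up} finitely many times, one refines $\mathscr{X}$ so that at every closed point $P$, with maximal ideal $(\pi,\delta)\subset\widehat{R_P}$, the following hold simultaneously: the Brauer class $[A\otimes F_P]$ falls into one of the four admissible shapes of \cref{qua} (the fifth shape $(u,v\pi\delta)$ is eliminated by \cref{9.1}); if $\sigma$ is of the second kind, the element $\lambda$ defining $L/F$ is compatible with the Saltman shape of $[A\otimes F_P]$ via \cref{9.2}; and each diagonal entry of $h$ lies in the appropriate maximal order with reduced norm equal to a unit times powers of $\pi$ and $\delta$. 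These are precisely the hypotheses needed to invoke \cref{decomp2}, and hence \cref{local-Z}, \cref{local-Z2} or \cref{local-Z3}, at every closed point.

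Let $v_\pi, v_\delta\in\Omega$ denote the divisorial valuations of $F$ attached to the two irreducible components of $\mathscr{X}_1$ through $P$. The global hypothesis $X(F_v)\ne\emptyset$ for all $v\in\Omega$, combined with the natural inclusion $F_{v_\pi}\hookrightarrow (F_P)_\pi$, gives $X((F_P)_\pi)\ne\emptyset$, and then one of \cref{local-Z}, \cref{local-Z2} or \cref{local-Z3} yields $X(F_P)\ne\emptyset$, the choice depending on whether $\sigma$ is of the first kind with $A$ quaternion, of the second kind with $A$ quaternion over $L$, or $A$ reducing to the field $L$; the split case where $A\otimes F_P$ is Morita equivalent to a quadratic form over $F_P$ is handled by \cite[3.1]{CTPS}. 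For the components $U$ of $\mathscr{X}_1\setminus\mathcal{P}$, I would take $\mathcal{P}$ to contain all singular points of $\mathscr{X}_1$ and all points where the ramification divisor of $[A]$ or the divisors of the diagonal entries of $h$ meet $\mathscr{X}_1$; on each such $U$ both $A$ and $h$ have good reduction, and the assumption $X(F_{v_\eta})\ne\emptyset$ for the generic point $\eta$ of the component of $\mathscr{X}_1$ containing $U$ propagates to $X(F_U)\ne\emptyset$ by the standard HHK smooth lifting argument, using the rationality of $G$.

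The main obstacle is the combinatorial bookkeeping required to arrange, via iterated blow-ups, all of the conditions of \cref{9.1}, \cref{9.2}, \cref{blowing_up} and the hypotheses of \cref{decomp2} simultaneously at every closed point: a blow-up that fixes the shape of $[A]$ at one point may disrupt the shape of $\lambda$ or introduce a new closed point of forbidden type, so the process must be iterated carefully until every closed point of the refined $\mathscr{X}_1$ carries only admissible data. Once this has been achieved, the local work of \cref{sec7} applies uniformly and \cite[3.7]{HHK1} glues the local points together into a global rational point of $X$.
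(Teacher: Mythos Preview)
Your overall strategy is right, but there is a genuine gap in how you pass from the global algebra to the local quaternion setup. When you write ``replace $A$ by its underlying division algebra'' and then speak of ``the divisors of all diagonal entries of $h$'', you are implicitly treating the global division algebra $D$ as if it were itself of index at most $2$. The hypothesis only says $\Ind(A\otimes F_P)\le 2$ at each closed point; the global $D$ may have much larger index. Consequently the diagonal entries $a_i\in D$ live in a large algebra, and no global normal-crossings arrangement of their reduced norms puts you in a position to apply \cref{local-Z}, \cref{local-Z2} or \cref{local-Z3}, all of which are stated for a quaternion (or field) over the complete local ring. The paper's proof does Morita equivalence \emph{locally}: at each $P$ one passes from $(A\otimes F_P,\sigma\otimes 1,h\otimes 1)$ to $(D_P,\sigma_P,h_P)$ with $D_P$ quaternion, and only then diagonalizes $h_P=\langle a_1^P,\ldots,a_m^P\rangle$. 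The entries $a_i^P$ and their reduced norms $f_i^P=\Nrd_{D_P}(a_i^P)\in F_P$ are genuinely local objects, not restrictions of global ones, so their supports cannot be controlled by a global choice of model. This is exactly why \cref{blowing_up} is needed: it lets you resolve the support of the \emph{local} elements $f_i^P$ by blow-ups of $\Spec(\widehat{R_P})$ that lift to blow-ups of $\mathscr X$, and it guarantees that at least one of the resulting parameters $\pi_P,\delta_P$ is an exceptional curve, hence gives a divisorial valuation so that the hypothesis $X(F_{\pi_P})\ne\emptyset$ is available.

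A few smaller points: writing $[A]\in{}_2\Br(F)$ is incorrect in the second-kind case, where $Z(A)=L$ and the period need not be $2$; your claim that shape (5) of \cref{qua} is always eliminated is also off---in the second-kind case with $\lambda$ non-unit it may persist, and is handled instead by \cref{parameters2} via \cref{9.2}; and for the open sets $U$, the paper avoids any good-reduction argument by invoking \cite[5.8]{HHK2} directly (projectivity of $X$ and $X(F_\eta)\ne\emptyset$ suffice to shrink to $U_\eta$ with $X(F_{U_\eta})\ne\emptyset$), which is both simpler and does not constrain the later choice of $\mathcal P$.
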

 
%

\begin{proof} 
 
Let $X$ be a projective homogeneous space under $G(A, \sigma, h)$. 
Suppose that $X(F_v) \ne \emptyset$ for all divisorial discrete valuations of $F$.
We use \cite[3.7]{HHK1} to show that $X(F) \ne \emptyset$. 
Since $\sigma$ is arbitrary, we assume that $\varepsilon = 1$.
We use patching notations from \cref{data}. 

Write $L = F(\sqrt{\lambda})$ for $\lambda \in F^*$. 
Let  $\mathscr X$ be  a regular proper model of $F$ such that the union of the support of $\lambda$ and
the special fiber $\mathscr X_1$ of $\mathscr X$ is a union of regular curves with normal crossings. 
Let $\eta$ be a codimension zero point of $\mathscr X_1$.  
Since $X(F_\eta) \ne \emptyset$, by \cite[5.8]{HHK2}, there exists a non-empty open subset 
$U_\eta$ of the closure of $\eta$ such that $X(F_{U_\eta}) \ne \emptyset$ and $U_\eta$ 
does not meet other regular curves in the special fiber $\mathscr X_1$.

Let $\mathcal P$ be the finite set of closed points of $\mathscr X_1$ which are not on $U_\eta$
for any codimension zero point $\eta$ of $\mathscr X_1$.
For $P \in \mathcal P$, let $D_P$ be the central division algebra over $L_P = L \otimes F_P$
which is Brauer equivalent to $A \otimes F_P$. 
By Morita equivalence \cite[ch.~I, 9.3.5]{Knus}, there exists an involution $\sigma_P$ on $D_P$ and $h$ corresponds to a hermitian form $h_P$ over $(D_P, \sigma)$. 

Since for any closed point $P$ of $\mathscr X$, $\deg(D_P) \le  2$, either $D_P= L_P$ or $D_P$ is 
a quaternion division algebra. 
If $[L : F] = 2$, since $L^{\sigma}=F$,   $L_P^{\sigma_P}=F_P$
and   by a theorem of Albert \cite[2.22]{inv}, there exists a central division  algebra $(D_P)_0$ over $F_P$ 
such that $\deg ((D_P)_0) \le 2$ and $D_P \simeq (D_P)_0 \otimes L_P$. If $\deg((D_P)_0) = 2$,  then write 
$(D_P)_0 = (a_P, b_P)$ for some $a_P, b_P \in F_P$.

By \cref{blowing_up}, there exists a finite sequence of blow-ups $ \phi  : \mathscr X' \to \mathscr X$
such that for each $P \in \mathcal P$ and  $Q \in \phi^{-1}(P)$, the support  of 
$a_P$ and $b_P$ at $Q$ have normal crossings. In particular the ramification divisor of
$(D_P)_0$ at $Q$ has normal crossings.  
Let $\eta$ be an exceptional curve in $\mathscr X'$.  Since $X(F_\eta) \ne \emptyset$,
as above there exists a non-empty open set $U_\eta$ of the closure of $\eta$
such that $X(F_{U_\eta}) \ne \emptyset$.  Let $Q \in \mathscr X'$ be in the closure of 
$\eta$. Suppose $D \otimes F_Q$ is non-split. 
Since  $\phi(Q) = P$ and $D \otimes F_Q$ is Brauer equivalent to $D_P \otimes F_Q = (D_P)_0 \otimes L\otimes F_Q$.
In particular the support of the ramification divisor of $(D_P)_0 \otimes F_Q$ has normal crossings.
Thus, replacing $\mathscr X$ by $\mathscr X'$, we assume that if $P  \in \mathcal{P}$,
then $D_P = (D_P)_0 \otimes L_P$ and  the ramification divisor of $(D_P)_0$ has normal crossings at $P$.
Further, replacing $\mathscr X$ by a finite sequence of blow-ups at the points of $\mathcal P$,
using \cref{9.2}, we assume that for $P \in \mathcal P$, $D_P$ and $\lambda$ are
as in \cref{9.2}.

Let $P \in \mathcal{P}$. 
If $D_P = L_P$, let $\Lambda_P $ be the integral closure of $\widehat{R_P}$ in $L_P$.
If $D_P \ne L_P$, then $D_P = (D_P)_0 \otimes L_P$ with $(D_P)_0$ a quaternion algebra 
and $(D_P)_0$, $\lambda$ are as in \cref{9.2}. 
Let $\Lambda_P$ be the order as in \cref{parameters} or \cref{parameters2}. 
Since $D_P$ is division, $h_P = \langle a_1^P, \cdots , a_m^P\rangle$ with $a_i^P \in \Lambda_P$  
and $\sigma_P(a_i^P)  = a_i^P$. Let $f_i^P = \Nrd_{D_P}(a_i^P) \in F_P \subset L_P$. 
Since $\sigma_P(a_i^P) = a_i^P$, $f_i^P \in F_P$. 
Once again, using \cref{blowing_up}, replacing $\mathscr X$
by a finite sequence of blow-ups of $\mathscr X$ at the points of $\mathcal P$, we assume
that for every $P \in \mathcal P$,   the maximal ideal at $P$ is given by 
$(\pi_P, \delta_P)$, the support of $f_i^P$ is at most $\pi_P$ and $\delta_P$ and
at least one of $\pi_P$ and $\pi_P$ is an exceptional curve. 

Let $X^P$ be the projective homogeneous space under $G(D_P, \sigma_P, h_P)$. 
The maximal ideal at $P$ is given by $(\pi_P, \delta_P)$ and either $\pi_P$ or
$\delta_P$, say $\pi_P$, gives an exceptional curve. Since the valuation given by an exceptional curve is 
a divisorial discrete valuation, $X(F_{\pi_P}) \ne \emptyset$.
Thus, by \cref{2.2} or \cref{2.4},  $X^P((F_P)_{\pi_P})\ne \emptyset.$
 If $D_P = L_P$, then, by \cite[3.1]{CTPS} or \cref{local-Z3}, $X(F_P) \ne \emptyset$. 
If $D_P$ is a quaternion algebra, then, by \cref{local-Z} or \cref{local-Z2}, $X^P(F_P) \ne \emptyset$. 
By \cref{2.2} or \cref{2.4} again, $X(F_P) \ne \emptyset$ for all $P\in\mathcal P$. 

Therefore, by \cite[3.7]{HHK1}, $X(F)\ne \emptyset$.
\end{proof}
 
Now we restate and prove \cref{main-thm} as \cref{thm1*}. 

\begin{thm}
\label{thm1*}
Let $K$ be a complete discrete valued field with residue field $k$, $\Char k\ne 2$. 
Let $F$ be the function field of a smooth, projective, geometrically integral curve over $K$.
Let $\Omega$ be the set of all rank one discrete valuations on $F$. 
For each $v\in \Omega$, let $F_v$ be the completion of $F$ at $v$. 
Let $A$ be a finite-dimensional simple $F$-algebra with an involution $\sigma$ such that $F=Z(A)^{\sigma}$. 
Suppose that at least one of the following is satisfied. 
 
(1) $\Ind(A)\le 2$;

(2) $\Per(A)=2$, $|l^*/{l^*}^2|\le 2$ and ${_2}\Br(l)=0$ for all finite extensions $l/k$.

Let $\varepsilon\in\{1, -1\}$ and $h: V\times V\to A$ an $\varepsilon$-hermitian space over $(A, \sigma)$. 
Let $X$ be a projective homogeneous space under  
\[G=\left\{
\begin{array}{ll}
\SU(A,\sigma, h) & \text{if } \sigma \text{ is of the first kind;} \\
\U(A, \sigma, h) & \text{if } \sigma \text{ is of the second kind.}\\
\end{array}
\right.\]
If $X(F_v)\ne\emptyset$ for all $v\in \Omega$, then $X(F) \ne \emptyset$. 
\end{thm}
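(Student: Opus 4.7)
The plan is to reduce the theorem to \cref{9.3}, whose hypothesis is the local index bound $\Ind(A\otimes_F F_P)\le 2$ at every closed point $P$ of every regular proper model $\mathscr X$ of $F$, and whose conclusion is the desired Hasse principle. Case~(1) is immediate since $\Ind(A\otimes_F F_P)\le\Ind(A)\le 2$ by base change.

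For case~(2), fix a regular proper model $\mathscr X$, a closed point $P$, and write $\widehat{R_P}$ for the $2$-dimensional complete regular local ring at $P$ with residue field $\kappa$, a finite extension of $k$. Since $\Per(A)=2$, the class $\alpha=[A\otimes_F F_P]\in\Br(F_P)$ is $2$-torsion, and I would show it is represented by a quaternion algebra. After a finite sequence of blow-ups of $\Spec(\widehat{R_P})$ (Abhyankar--Lipman resolution, \cite{Abh}, \cite{Lip75}), assume the ramification divisor of $\alpha$ is a union of regular curves with normal crossings. At each closed point $Q$ of this scheme, \cref{class2} applies with suitable regular parameters $(\pi,\delta)$ and writes $\alpha = \alpha' + \alpha''$ with $\alpha'$ unramified on $\widehat{R_Q}$ and $\alpha''$ one of the listed ramified symbols. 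The unramified part $\alpha'$ is trivial because Azumaya algebras over the complete regular local ring $\widehat{R_Q}$ lift from their reductions in ${}_2\Br(\kappa_Q) = 0$ (using \cite[6.5]{AG}). The ramified part $\alpha''$ is a single quaternion in cases~(ii), (iii)(a), and (iii)(c) of \cref{class2}; case~(iii)(b) would require two units of $\widehat{R_Q}^*$ in distinct nontrivial square classes, which is ruled out by the hypothesis $|\kappa_Q^*/\kappa_Q^{*2}| \le 2$ together with Hensel's lemma (identifying $\widehat{R_Q}^*/\widehat{R_Q}^{*2}$ with $\kappa_Q^*/\kappa_Q^{*2}$).

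The last step is to promote the local quaternion descriptions at the various $Q$'s to a single quaternion representative of $\alpha$ over $F_P$ itself, giving $\Ind(\alpha) \le 2$. This is the main obstacle: it amounts to a period-equals-index statement for $2$-torsion Brauer classes on the two-local field $F_P$ under the hypotheses on $\kappa$, which can be verified via a residue decomposition of ${}_2\Br(F_P)$ combined with the symbol collapse enforced by $|\kappa^*/\kappa^{*2}|\le 2$ (for instance, any biquaternion $(u,\pi)+(v,\delta)$ of the expected shape reduces to a single quaternion $(uv,\pi\delta)$ modulo squares when $u$ and $v$ lie in the same square class). Once $\Ind(A\otimes_F F_P)\le 2$ is established at every closed point of every regular proper model, \cref{9.3} applies directly to conclude $X(F)\ne\emptyset$.
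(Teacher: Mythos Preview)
Your overall strategy---reduce to \cref{9.3} by verifying $\Ind(A\otimes_F F_P)\le 2$ at every closed point $P$---is exactly the paper's approach, and your treatment of case~(1) is correct. However, in case~(2) you manufacture an obstacle that does not exist in the cleaner argument.

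The paper does \emph{not} blow up $\Spec(\widehat{R_P})$. Instead it fixes from the outset a regular proper model $\mathscr X$ on which the ramification locus of $A$ is already a union of regular curves with normal crossings. At any closed point $P$ of such a model, the ramification of $\alpha=[A\otimes_F L_P]$ on $\widehat{R_P}$ is automatically supported on the two local parameters, so \cref{class2} applies \emph{directly} at $\widehat{R_P}$, yielding $\alpha=\alpha'+\alpha''$ over $L_P$ itself. Since $k_P$ is a finite extension of $k$, the hypotheses give ${}_2\Br(k_P)=0$, whence ${}_2\Br(\widehat{R_P})=0$ by \cite[6.2]{AG}, so $\alpha'=0$. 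The square-class bound $|k_P^*/k_P^{*2}|\le 2$ kills case~(iii)(b), and every remaining $\alpha''$ is a single quaternion symbol. Thus $\Ind(A\otimes L_P)\le 2$ with no patching step required.

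Your extra blow-ups at $P$ produce quaternion descriptions of $\alpha$ only over the fields $F_Q$ attached to points $Q$ of the exceptional locus, and these fields are \emph{not} contained in $F_P$; recovering an index bound over $F_P$ from this data is the genuine difficulty you flag, and your proposed resolution (``residue decomposition \ldots\ symbol collapse'') is not an argument. So the ``main obstacle'' in your last paragraph is self-inflicted: drop the local blow-ups, apply \cref{class2} once at $\widehat{R_P}$, and the proof is complete.
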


\begin{proof}

%
%
Let $L=Z(A)$. Let $\mathscr X$ be a regular proper model of $L$ 
with ramification locus of $A$ a union of regular curves with 
normal crossings and $P$ a closed point of $\mathscr X$. 
Let $\widehat{R_P}$ and $F_P$ be as in \cref{data} and $L_P=L\otimes F_P$. 
Let $k_P$ be the residue field of $\widehat{R_P}$. 

(1) If $\Ind(A)\le 2$, we have $\Ind (A\otimes L_P)\le 2$ for all closed points $P$ of $\mathscr X$. 

(2) Suppose $\Per(A)=2$, $|l^*/{l^*}^2|\le 2$ and ${_2}\Br(l)=0$ for all finite extensions $l/k$. 
Then $k_P^*$ has at most two square classes and ${_2}\Br(k_P)=0$. 
Then by \cite[6.2]{AG}, ${_2}\Br(\widehat{R_P})=0$. 
Then, by \cref{class2}, $\Ind(A\otimes L_P)\le 2$. 

Hence the Hasse principle is a consequence of \cref{9.3}. 
\end{proof}

Finally, we restate and prove \cref{main-cor} as \cref{cor1*}. 

\begin{cor}\label{cor1*}
Let $p$ be an odd prime. 
Let $K$ be a $p$-adic field. 
Let $F$ a function field in one variable over $K$. 
Let $\Omega$ be the set of all discrete valuations on $F$. 
Let $G$ be a connected linear algebraic group such that there exists an isogeny from a product of almost simple groups of one of the following types to the semisimple group $G/\Rad(G)$. $${^1}A_n, \quad {^2}A_n^*,\quad  B_n, \quad C_n, \quad D_n~(D_4 \text{ nontrialitarian}),$$ where ${^2}A_n^*$ means that the almost simple factor is isogenous to a unitary group $\U(A, \sigma, h)$ such that $\sigma$ is of the second kind and $\Per(A)=2$. 
Let $X$ be a projective homogeneous space under $G$. 
Then \[\prod\limits_{v\in\Omega} X(F_v)\ne\emptyset\implies X(F)\ne\emptyset. \]
\end{cor}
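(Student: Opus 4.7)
The proof amounts to assembling the three already-cited Hasse principles---\cite[2.6]{RS} for inner type $A$, \cite[3.1]{CTPS} for type $B$, and the main theorem for the remaining cases---and applying them to the almost simple factors of $G/\Rad(G)$. First I would perform the standard reduction from a general connected linear algebraic group to its almost simple factors. Since projective homogeneous spaces under $G$ coincide with projective homogeneous spaces under $G/\Rad(G)$, and since a projective homogeneous space under a product of almost simple groups is a product of projective homogeneous spaces under the factors, the Hasse principle for $X$ under $G$ reduces to the Hasse principle for each factor. One then uses the fact that projective homogeneous spaces are stable under isogeny (the twisted flag varieties depend only on the isogeny class), so that we may replace $G$ by any group in its isogeny class.

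Next I would check the residue-field hypotheses once and for all. Because $K$ is $p$-adic with $p$ odd, the residue field $k$ is a finite field of odd characteristic; every finite extension $l/k$ is again a finite field of odd characteristic, so $|l^*/{l^*}^2| = 2$ and ${_2}\Br(l)=0$ (Brauer groups of finite fields vanish). Hence condition~(2) of \cref{main-thm} is satisfied whenever it needs to be invoked.

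With these reductions in hand the dispatch is immediate. For type ${^1}A_n$, the almost simple factor is isogenous to $\PGL_1(A)$ for a central simple algebra $A$ over $F$, so the Hasse principle holds by \cite[2.6]{RS}. For type $B_n$ the factor is isogenous to $\SO(q)$ for an odd-dimensional nondegenerate quadratic form, so \cite[3.1]{CTPS} applies. For type $C_n$ the factor is isogenous to $\SU(A,\sigma,h)$ with $\sigma$ of the first kind and symplectic; since a symplectic involution exists on $A$ only when $A \simeq A^{\Op}$, we have $\Per(A)=2$, and case~(2) of \cref{main-thm} applies. For type $D_n$ (non-trialitarian) the factor is isogenous to $\SU(A,\sigma,h)$ with $\sigma$ orthogonal (first kind), again forcing $\Per(A)=2$, so \cref{main-thm}(2) applies. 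For type ${^2}A_n^*$, by hypothesis the factor is isogenous to $\U(A,\sigma,h)$ with $\sigma$ of the second kind and $\Per(A)=2$, so \cref{main-thm}(2) applies directly.

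The only real obstacle is the bookkeeping in the first step: carefully matching the list of types against the list of involutions (and checking that ${^2}A_n^*$ genuinely captures the situation of \cref{as1} with $\sigma$ of the second kind), and verifying that the Hasse principle descends from $G$ through the isogeny to $G/\Rad(G)$ and then back up to the universal cover of each almost simple factor. Once this packaging is done, the three Hasse principles close out every case, and the corollary follows.
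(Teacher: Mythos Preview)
Your proposal is correct and follows essentially the same route as the paper: reduce from $G$ to $G^{ss}=G/\Rad(G)$ (via \cite[5.7]{CTGP}), pass across the isogeny to the product of almost simple factors (using that isogenies in characteristic~$0$ are central and \cite[2.20(i)]{BT}), split $X$ as a product $X_1\times\cdots\times X_r$ (via \cite[6.10(e)]{MPW2}), and then dispatch each type to \cite[2.6]{RS}, \cite[3.1]{CTPS}, or \cref{thm1*}. Your explicit verification of the residue-field hypotheses for condition~(2) of \cref{main-thm} is a helpful addition that the paper leaves implicit inside the proof of \cref{thm1*}.
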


\begin{proof}
Let $G^{ss}$ be the semisimple group $G/\Rad(G)$. 
By \cite[5.7]{CTGP}, $X$ is a projective homogeneous space under $G^{ss}$.  
By \cite[14.10(2)]{Borel}, there exists an isogeny $G_1\times \cdots \times G_r\to G^{ss}$ where $G_i$ are almost simple groups. 
Since $\Char F=0$, all isogenies of algebraic groups over $F$ are central. 
By \cite[2.20, (i)]{BT}, central sujective morphisms of algebraic groups give isomorphisms of their projective homogeneous spaces. 
Then $X$ is a projective homogeneous space under $G_1\times \cdots \times G_r$. 
By \cite[6.10(e)]{MPW2},  $X\simeq X_1\times \cdots \times X_r$ where $X_i$ is a projective homogeneous space under $G_i$ for each $1\le i\le r$. 
Then $X(F)\ne\emptyset$ if and only if $X_i(F)\ne \emptyset$ for all $1\le i\le r$. 
By assumption, $G_i$ has one of the types ${^1}A_n$, ${^2}A_n^*$, $B_n$, $C_n$, ${^1}D_n$, ${^2}D_n$. 
The type ${^1}A_n$ case has been proved by Reddy and Suresh \cite[2.6]{RS}. 
The type $B_n$ case has been proved by Colliot-Th\'el\`ene, Parimala and Suresh \cite[3.1]{CTPS}. 
By \cite[Table 1]{Tits}, if $G_i$ has type ${^2}A_n^*$, then $G_i$ is isogenous to $\U(A,\sigma, h)$; if $G_i$ has type $B_n$, $C_n$ or $D_n$, then $G_i$ is isogenous to $\SU(A,\sigma, h)$. 
By \cite[2.20, (i)]{BT} again, we may assume that $G_i$ is the unitary group or the special unitary group as above and hence $X$ is as in  \cref{phs}. 
The rest follow from \cref{thm1*}. 
\end{proof}

\section{Application: Odd degree extensions}\label{sec10}
%
%
In this section we prove \cref{odd}. 
We begin with the following. 

\begin{lem}\label{pre-odd}
Let $(L, v)$ be a complete discrete valued field and 
 $k_L$   the residue field  of $L$ with $\Char k_L\ne 2$. 
Let $M$ be an odd degree extension of $L$, with residue field $k_M$. 
Suppose that for any central division $k_L$-algebra $E$ with involution $\tau$ and any $\varepsilon$-hermitian form $\varphi$ over $(E, \tau)$, if $\varphi_{k_M}$ is isotropic, then $\varphi$ is isotropic. 
Let $D$ be a central division $L$-algebra with an involution $\sigma$ and $\Per(D)=2$. 
Let $h$ be an $\varepsilon$-hermitian form over $(D, \sigma)$ for $\varepsilon\in\{1, -1\}$. 
If $h_M$ is isotropic, then $h$ is isotropic.
\end{lem}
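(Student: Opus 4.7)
The plan is to reduce the isotropy problem for $h$ over the complete field $L$ to an analogous isotropy problem for a hermitian form over the residue division algebra $D(w)$, where the hypothesis of the lemma applies directly.

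The first step is to verify that $D_M := D \otimes_L M$ is still a central division $M$-algebra with involution $\sigma_M := \sigma \otimes \Id_M$. Since $\Per(D) = 2$ and $[M:L]$ is odd, standard Brauer group arithmetic gives $\Per(D_M) = 2$ and $\Ind(D_M) = \Ind(D)$, so $D_M$ is a division algebra. Because $L$ is complete, the valuation $w$ on $D$ extends uniquely to a valuation $w_M$ on $D_M$ with $w_M|_D = w$, and the ramification index $e(w_M/w)$ and residue degree $[Z(D_M(w_M)):Z(D(w))]$ both divide $[M:L]$, hence are odd.

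The second step is to apply Larmour's decomposition (Proposition \ref{Lamour}) to $h$ over $(D,\sigma)$. Choose $\pi_D \in D^{\ast}$ with $\sigma(\pi_D) = \varepsilon' \pi_D$ and $w(\pi_D) \equiv w(t_D) \bmod 2w(D^\ast)$, and write
\[
h \simeq h_1 \perp h_2\pi_D,
\]
where the diagonal entries of $h_1, h_2$ are $w$-units; then $h$ is isotropic if and only if $q_w(h_1)$ or $q_w(h_2)$ is isotropic over the corresponding residue involutorial algebra. Tensoring with $M$ gives $h_M \simeq (h_1)_M \perp (h_2)_M \pi_D$. Since $e(w_M/w)$ is odd, the element $\pi_D$ also serves as a Larmour parameter for $D_M$ (the parity class of $w_M(\pi_D)$ modulo $2 w_M(D_M^{\ast})$ agrees with that of $w_M(t_{D_M})$), so that Larmour's decomposition applied to $h_M$ is compatible with the above: $h_M$ isotropic implies $q_{w_M}((h_i)_M)$ is isotropic over $(D_M(w_M), (\sigma_M)_{w_M})$ for $i=1$ or $i=2$.

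The third step is to identify $D_M(w_M)$ with an odd-degree scalar extension of $D(w)$: by unique extension of valuations and the structure of complete discretely-valued division algebras, $D_M(w_M)$ is Brauer-equivalent to $D(w) \otimes_{Z(D(w))} Z(D_M(w_M))$, and $Z(D_M(w_M))/Z(D(w))$ sits inside $k_M/k_L$, hence is of odd degree. Under this identification, $q_{w_M}((h_i)_M)$ is the scalar extension of $q_w(h_i)$ to this odd-degree residue extension. Morita equivalence (which preserves isotropy by \cite[ch.~I, 9.3.5]{Knus}) transports this residue form to a hermitian form over the underlying residue division $k_L$-algebra $E$ with involution $\tau$. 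The hypothesis of the lemma then applies and yields that $q_w(h_i)$ is already isotropic over $(D(w),\sigma_w)$. A final invocation of Larmour's equivalence (\ref{Lamour}) gives that $h_i$ is isotropic over $(D,\sigma)$, whence so is $h$.

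The main obstacle will be the third step: making the identification $D_M(w_M) \simeq D(w) \otimes_{Z(D(w))} Z(D_M(w_M))$ (together with the compatible involutions) fully rigorous when $M/L$ has both ramification and residue extension. A convenient device is to factor $M/L$ as $M/M_0 \cdot M_0/L$, with $M_0/L$ the maximal unramified subextension: in the unramified piece the residue field grows by an odd-degree extension while $\pi_D$ is unchanged, and in the totally ramified piece the residue fields coincide so the residue form is literally unchanged, and each piece of the argument can be handled separately by iterating the lemma.
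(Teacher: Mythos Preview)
Your proposal is correct and follows essentially the same path as the paper's proof: both arguments use that $D_M$ remains division (because $\Per(D)=2$ and $[M:L]$ is odd), apply Larmour's decomposition $h\simeq h_1\perp h_2\pi_D$, observe that the same $\pi_D$ serves as a Larmour parameter for $D_M$ since $e(D_M/D)$ is odd, and then reduce to the residue-level hypothesis via \cref{Lamour}.

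The only noteworthy difference is in your third step. The paper asserts the identification $\overline{D\otimes_L M}\simeq \overline D\otimes_{k_L}k_M$ (with compatible involutions and residue forms) in one line, passing directly from isotropy of $\overline{(h_i)_M}$ over $(\overline{D\otimes_L M},\overline{\sigma\otimes\Id_M})$ to isotropy of $(\overline{h_i})_{k_M}$ over $(\overline D\otimes_{k_L}k_M,\overline\sigma\otimes\Id_{k_M})$, and then invokes the hypothesis. You are more cautious here, phrasing the identification as a Brauer equivalence together with Morita equivalence, and you propose factoring $M/L$ through its maximal unramified subextension $M_0$ to make this rigorous (the unramified piece changes the residue field by an odd extension while leaving $\pi_D$ fixed; the totally ramified piece leaves the residue data literally unchanged). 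This dévissage is a perfectly good way to justify the step, and arguably tidier than verifying the tensor identification directly; the paper's argument is terser but relies on the same underlying facts. Either route leads to the same conclusion.
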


\begin{proof}
Since $L$ is complete, the valuation $v$ on $L$ extends to a discrete valuation $v'$ on $M$. 
Let $t$ be a uniformizer of $L$, $t'$ a uniformizer of $M$ such that $(t')^e=t$ where $e=e(M/L)$. 
By \cite[4.5.11, 2.]{GS}, $D'=D\otimes_LM$ is a division algebra. 
Let $w$ be the extension of $v$ to $D$ and $w'$ the extension of $v'$ to $D'$. 
Let $\pi$ be a uniformizer of $D$ and $\pi'$ a uniformizer of $D'$. 
By \cite[prop.~2.7]{L2}, there exists $x\in D$   such that 
\begin{equation}\label{unif1}
w(x)\equiv w(\pi)\mod 2w(D^*),\quad\sigma(x)=\varepsilon x, \varepsilon\in\{1,-1\}.
\end{equation}
By the second to the last paragraph of \cite[p.~393]{W3}, $e(D'/D)$ is a factor of $[M:L]$.
Since $[M:L]$ is odd,  $e(D'/D)$ is odd. 
Then $w'(\pi\otimes_L1_M)\equiv w'(\pi')\mod 2w'({D'}^*)$. 
Let $x' = x \otimes 1 \in D'$ and  $\sigma(x') = \varepsilon x'$.
By Larmour's theorem, \cref{Lamour}, 
\begin{equation}\label{dec1}
h\simeq h_1\perp h_2x
\end{equation}
where all diagonal entries of $h_1$ and $h_2$ have valuation $0$ in $D$. 
Thus 
\begin{equation}\label{dec2}
h_M\simeq(h_1)_M\perp (h_2)_M(x\otimes_L1_M)=(h_1)_M\perp (h_2)_M x'
\end{equation}
In the following, an overline means ``over the residue field''. 
We have 
\begin{center}
\begin{tabular}{ll}
& $h_M$ is isotropic, \\
$\iff$ & one of $\overline{(h_i)_M}$ is isotropic over 
$(\overline{ D\otimes_L M}, \overline{\sigma\otimes_L\Id_M})$, \\
& by applying \cref{Lamour} to \cref{dec2}. \\
$\iff$ & one of $(\overline{h_i})_{k_M}$ is isotropic over $(\overline D\otimes_{k_L} {k_M}, \overline{\sigma}\otimes_{k_L}\Id_{k_M})$. \\
$\iff$ & one of $\overline{h_i}$ is isotropic over $(\overline D, \overline{\sigma})$, by the given condition on $k_M/k_L$. \\
$\iff$ & $h$ is isotropic over $(D, \sigma)$, by applying \cref{Lamour} to \cref{dec1}. \\
\end{tabular}
\end{center}where $i\in \{1,2\}$.
\end{proof}

\begin{ww}\label{ready}
Let $L$ be an arbitrary field of characteristic not 2. 
Let $M$ be an odd degree extension of $L$. 
For each discrete valuation $v$ of $L$ with valuation ring $R_v$ and maximal ideal $\mathfrak p_v$, let $\widehat{R_v}$ be its completion and $L_v=\Frac(\widehat{R_v})$. 
Let $S$ be the integral closure of $R_v$ in $M$ and $\mathfrak P_i, 1\le i\le n$ be prime ideals of $S$ lying over $\mathfrak p_v$. 
Let $\widehat{S_i}$ be the completion of $S$ at $\mathfrak P_i$ and $M_i=\Frac(\widehat{S_i})$. 
By \cite[p.~15, (2)]{CF}, $$M\otimes_LL_v\simeq \prod\limits_{i=1}^n M_i.$$
Since $[M:L]=[M\otimes_LL_v:L_v]=\sum\limits_{i=1}^n[M_i: L_v]$ is odd, there exists $1\le j\le n$ such that $[M_j: L_v]$ is odd. 
\end{ww}

\begin{lem}\label{local2}
Let $L$ be a non-archimedean \textit{local} field of characteristic not 2. 
Let $M$ be an odd degree extension of $L$. 
Let $D$ be a division algebra over $L$ such that $D\ne L$. 
Let $\sigma$ be an involution of $D$. 
Let $h$ be an $\varepsilon$-hermitian form over $(D, \sigma)$. 
If $h_M$ is isotropic, then $h$ is isotropic.
\end{lem}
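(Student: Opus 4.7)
The plan is to reduce to \cref{pre-odd} by first localizing $M$ at an appropriate prime above the valuation of $L$. Since $L = L_v$ is already complete, \cref{ready} furnishes a prime $\mathfrak P_j$ of $M$ above $v$ whose completion $M_j$ satisfies $[M_j : L]$ odd; the embedding $M \hookrightarrow M_j$ preserves isotropy of $h$, so it suffices to prove the statement with $M$ replaced by $M_j$. Thus I would assume from the outset that $M/L$ is an odd-degree extension of complete non-archimedean local fields, with finite residue fields $k_L \subset k_M$.

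If $\sigma$ is of the first kind, then $D$ is central over $L$, and the existence of a first-kind involution forces $\Per(D) \mid 2$; since $D \ne L$ and $L$ is local (so $\Per = \Ind$), we have $\Per(D) = \Ind(D) = 2$ and $D$ is the unique quaternion division algebra over $L$. The hypotheses of \cref{pre-odd} are then met, and the only remaining content is its residue-field descent condition. Since $k_L$ is finite, Wedderburn's little theorem forces every central division $k_L$-algebra to be $k_L$ itself with trivial involution, so this condition reduces to Springer's odd-degree descent for quadratic forms over $k_L$, which holds since $\Char k_L \ne 2$.

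If $\sigma$ is of the second kind, let $K = Z(D)$, a quadratic extension of $L$ with $K^\sigma = L$, and let $\tau$ be the non-trivial element of $\Gal(K/L)$. Since $\Gal(K/L)$ acts trivially on the invariant of $\Br(K) \simeq \mathbb Q/\mathbb Z$ while $\sigma$ identifies $[D^{\Op}] = \tau^*[D]$, combining these gives $-[D] = [D]$, so $\Per(D) = 2$ and $D$ is quaternion over $K$. The parity conditions $[M:L]$ odd and $[K:L] = 2$ force $K \otimes_L M = KM$ to be a field, odd-degree over $K$, and $D \otimes_K KM$ remains a quaternion division algebra (the index does not drop across odd-degree extensions of local fields). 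Identifying $D \otimes_L M = D \otimes_K KM$ shows isotropy of $h_M$ is equivalent to isotropy of $h_{KM}$, viewed as a form over $(D \otimes_K KM, \sigma \otimes \Id_{KM})$. I would then mimic the proof of \cref{pre-odd} with $L$ replaced by $K$ and $M$ by $KM$: Larmour's decomposition (\cref{Lamour}) applies to either kind of involution on a quaternion division algebra over a complete discretely valued field and is compatible with odd-degree base change, so isotropy is again controlled at the residue level. Here $\overline K$ is finite and $\overline D$ is at most a quadratic extension of $\overline K$, so the relevant residue forms are quadratic or sesquilinear forms over finite fields; odd-degree descent in these settings is classical.

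The main obstacle is the second-kind case: \cref{pre-odd} is stated only when $D$ is central over the base field, which forces $\sigma$ to be of the first kind. Shifting the base from $L$ to $K$ restores that setting, but this requires verifying that $D \otimes_K KM$ remains a quaternion division algebra, that Larmour's decomposition behaves correctly across this shift, and that the residue descent remains tractable — each routine but slightly finicky.
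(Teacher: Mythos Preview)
Your first-kind argument is fine and essentially agrees with the paper's: over a non-archimedean local field the only non-trivial division algebra carrying a first-kind involution is the quaternion algebra, and then either \cref{pre-odd} with the finite residue field or the direct citation \cite[3.5]{PSS} does the job.

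The second-kind case, however, rests on a false structural claim. You deduce $2[D]=0$ in $\Br(K)$ from $\tau_*[D]=[D]=-[D]$ and conclude that $D$ is quaternion over $K$. But the existence of a second-kind involution is equivalent to $\operatorname{cor}_{K/L}[D]=0$ in $\Br(L)$ \cite[3.1]{inv}, and for a quadratic extension of non-archimedean local fields the corestriction $\Br(K)\to\Br(L)$ is an \emph{isomorphism} (it is the identity on invariants). Hence $[D]=0$, i.e.\ $D=K$ itself; there is no quaternion case to worry about. Your subsequent plan (base-change to $K$, check $D\otimes_K KM$ stays division, redo Larmour over $K$, etc.) is built on a situation that cannot occur, and the ``main obstacle'' you identify dissolves.

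With $D=K$ the paper's proof is immediate: after scaling by a skew element (Hilbert~90) to reduce to $\varepsilon=1$, the hermitian form $h$ over $(K/L,\iota)$ has a trace quadratic form $q_h$ over $L$ with the property that $h$ is isotropic iff $q_h$ is \cite[ch.~10, 1.1(i)]{Sch}. Since $KM/M$ is still quadratic (as $[M:L]$ is odd), the same holds for $h_M$ and $(q_h)_M$, and Springer's theorem for quadratic forms over $L$ finishes the argument in one line.
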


\begin{proof}
Let $\sigma$ be of the first kind. 
By \cite[ch.~10, 2.2(i)]{Sch}, $D$ is the unique quaternion division algebra over $L$, and it suffices to apply \cite[3.5]{PSS}. 

Let $\sigma$ be of the second kind. 
If $\varepsilon=-1$, by Hilbert 90 \cite[ch.~V, \S~11, no.~6, th.~3, a)]{Balg47}, there exists $\mu\in Z(D)- L$ such that $\sigma(\mu)=-\mu$. 
By scaling \cite[ch.~I, 5.8]{Knus}, $h$ is isotropic over $(D, \sigma)$ if and only if $\mu^{-1}h$ is isotropic over $(D, \sigma)$, where $\Int(\mu)\circ \sigma=\sigma$ and $\mu^{-1}h$ is a hermitian form. 
Hence we may assume that $\varepsilon=1$. 
By \cite[ch.~10, 2.2(ii)]{Sch}, $D/L$ is a quadratic field extension. Also $D_M/M$ is a quadratic field extension. 
The conclusion follows from applying Springer's theorem \cite{Spr1} to the trace quadratic forms of $h$ and $h_M$ \cite[ch.~10, 1.1(i)]{Sch}, respectively. 
\end{proof}

\begin{lem}\label{global2}
Let $L$ be a \textit{global} field of characteristic not 2. 
Let $M$ be an odd degree extension of $L$. 
Let $D$ be a division $L$-algebra with an involution $\sigma$ such that $D\ne L$ and $\Per(D)=2$. 
Let $h$ be an $\varepsilon$-hermitian form over $(D, \sigma)$. 
If $h_M$ is isotropic, then $h$ is isotropic.
\end{lem}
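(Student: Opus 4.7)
The plan is to invoke the classical Hasse principle for isotropy of hermitian forms over global fields and then handle each completion $L_v$ by combining \cref{ready} with Morita equivalence and either \cref{local2} or Springer's theorem for quadratic forms.

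For the global-to-local reduction, the classical Hasse principle for isotropy of $\varepsilon$-hermitian forms over central simple algebras with involutions over a global field (see for instance \cite[ch.~10]{Sch}, or equivalently the Hasse principle for projective homogeneous spaces under classical groups over a global field) says that $h$ is isotropic over $L$ if and only if $h_{L_v}$ is isotropic over $L_v$ for every place $v$ of $L$. It therefore suffices to prove isotropy at each $v$.

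Fix a place $v$ of $L$. By \cref{ready} there is a place $w$ of $M$ over $v$ with $[M_w:L_v]$ odd; since $h_M$ is isotropic, the form $(h_{L_v})_{M_w} \simeq h_M \otimes_M M_w$ is isotropic over $M_w$. If $v$ is archimedean, any odd-degree extension of $\mathbb R$ or $\mathbb C$ is trivial, so $M_w=L_v$ and $h_{L_v}$ is already isotropic. If $\sigma$ is of the second kind and $v$ splits in $Z(D)/L$, then $D\otimes_L L_v \simeq B \times B^{\Op}$ with the involution swapping the factors, and up to Morita equivalence any nonzero hermitian form of this shape is automatically isotropic. In the remaining case, write $D\otimes_L L_v = M_{m_v}(D_v)$ with $D_v$ a division algebra over the field $Z(D_v)=Z(D)\otimes_L L_v$; by Morita equivalence \cite[ch.~I, 9.3.5]{Knus} we obtain an $\varepsilon'$-hermitian form $\tilde h_v$ over $(D_v,\tilde\sigma_v)$ such that $\tilde h_v$ is isotropic iff $h_{L_v}$ is, and such that $(\tilde h_v)_{M_w}$ is isotropic.

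Since $\Per(D)=2$ we have $\Ind(D_v)\le 2$ over the local field $L_v$. If $D_v\ne L_v$, then \cref{local2} applied to $(D_v,\tilde\sigma_v,\tilde h_v)$ and the odd-degree extension $M_w/L_v$ yields isotropy of $\tilde h_v$. If $D_v=L_v$, then necessarily $\sigma$ is of the first kind (the second-kind split case was handled above), $\tilde\sigma_v=\Id$, and $\tilde h_v$ is either a symmetric bilinear form (if $\varepsilon'=1$) or an alternating form (if $\varepsilon'=-1$) over $L_v$; the alternating case is trivially isotropic, and in the symmetric case Springer's classical theorem for quadratic forms over odd-degree extensions gives isotropy of $\tilde h_v$ from that of $(\tilde h_v)_{M_w}$. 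Thus $h_{L_v}$ is isotropic for every $v$, and the Hasse principle completes the proof. The main obstacle is pinning down the precise reference for the global Hasse principle in the generality needed (both kinds of involution and both signs of $\varepsilon$); the split second-kind case and the alternating $D_v=L_v$ case are minor technicalities that must be singled out during the Morita step.
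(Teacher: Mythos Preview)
Your proposal is correct and, for the second-kind case, follows essentially the paper's own argument: reduce to completions via the Hasse principle, then at each place use \cref{ready} to find an odd local extension and invoke \cref{local2}, Springer's theorem, or the archimedean triviality as appropriate. The paper's Case~2 (split $D\otimes L_v$) is exactly your Morita-to-quadratic-form step, and its Case~1 and Case~3 match yours.

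The one genuine difference is in the first-kind case. The paper does not run the local--global argument there at all: since $\Per(D)=2$ and $\sigma$ is of the first kind, $D$ is a quaternion division algebra over the global field $L$, and the paper simply quotes \cite[3.5]{PSS}, which proves the odd-degree Springer theorem for hermitian forms over quaternion algebras over \emph{any} base field. Your route instead invokes the Hasse principle for (skew-)hermitian forms over quaternion algebras over global fields and then applies \cref{local2} (which itself rests on \cite[3.5]{PSS}) at each place. Both are valid; the paper's shortcut avoids having to name a Hasse principle for the first-kind case, while your uniform treatment trades that citation for one extra (standard) local--global input. Your remark about pinning down the Hasse-principle reference is well taken: for the second kind the paper uses Landherr \cite{Land}, and for the first kind you would need the analogous result for quaternionic hermitian forms (also in \cite[ch.~10]{Sch}).
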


\begin{proof}
If $\sigma$ is of the first kind, by \cite[ch.~10, 2.3(vi)]{Sch}, $D$ is a quaternion division algebra and the result follows from \cite[3.5]{PSS}. 

Now suppose $\sigma$ is of the second kind. 
Suppose $Z(D)=L(\sqrt{\lambda})$. 
Let $\Omega_L$ be all the places of $L$ and $\Omega_M$ all the places of $M$. 
If $v\in\Omega_L$ such that $\lambda$ is a square in $L_v$, by \cite[ch.~10, 6.3]{Sch} $(D\otimes_L L_v, \sigma\otimes_L\Id_{L_v})$ is a hyperbolic ring and $h_{L_v}$ is always hyperbolic. 

Suppose $v\in\Omega_L$ is such that $\lambda$ is not a square in $L_v$. 
by \cref{ready} we have an odd degree extension $M_j/L_v$. 

\noindent
Case 1:  $v$ is non-archimedean and $D\otimes_LL_v$ is not split. 
Since $h_M$ is isotropic, $h_{M_j}$ is isotropic. 
By \cref{local2}, $h_{L_v}$ is isotropic. 

\noindent
Case 2: $v$ is non-archimedean and $D\otimes_LL_v$ is split. 
Then $D \otimes {M_j}$ is split. 
Since $h_M$ is isotropic, $h_{M_j}$ is isotropic. 
Suppose $h_{L_v}$ is Morita equivalent to a quadratic form $q$ over $L_v$. 
Then $h_{M_j}$ is Morita equivalent to the quadratic form $q_{M_j}$. 
Then $q_{M_j}$ is isotropic. 
By \cite{Spr1}, $q$ is isotropic over $L_v$. 
By Morita equivalence again, $h_{L_v}$ is isotropic. 

\noindent
Case 3: $v$ is archimedean.  
Any place $w\in \Omega_M$ that lies over $v$ is still archimedean. 
Since $[M_j:L_v]$ is odd, $M_j=L_v\simeq \mathbb R$ or $\mathbb C$. 
Since $h_M$ is isotropic, $h_{M_w}=h_{L_v}$ is isotropic. 

By three cases above, $h_{L_v}$ is isotropic for all $v\in\Omega_L$. 
Finally, by Landherr's local-global principle (see \cite{Land} or \cite[ch.~10, 6.2]{Sch}), $h$ is isotropic. 
\end{proof}


Finally, we revisit and prove \cref{odd} as \cref{odd2}. 

\begin{thm}\label{odd2}
Let $p$ be an odd prime. 
Let $K$ be a $p$-adic field. 
Let $F$ be the function field of a smooth, projective, geometrically integral curve over $K$.
Let $\Omega$ be the set of all rank one discrete valuations on $F$. 
Let $A$ be a finite-dimensional central simple $F$-algebra with an involution $\sigma$ of the first kind.  
Let $h: V\times V\to A$ be an $\varepsilon$-hermitian space over $(A, \sigma)$ for $\varepsilon\in\{1, -1\}$. 

Let $M$ be an odd degree extension of $F$. 
If $h_M$ is isotropic, then $h$ is isotropic. 
\end{thm}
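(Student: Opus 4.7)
The plan is to apply the Hasse principle of \cref{main-thm} to the projective homogeneous space $X:=X_d$, where $d=\Ind(A)$ and $X_d$ parametrizes totally isotropic $A$-submodules of reduced dimension $d$. Since $\sigma$ is of the first kind, $A\simeq A^{\Op}$ forces $\Per(A)\mid 2$, so in particular $d$ is a power of $2$. Since $K$ is a $p$-adic field with $p$ odd, the residue field $k$ is finite of characteristic $\ne 2$, so every finite extension $l/k$ satisfies $|l^*/(l^*)^2|\le 2$ and ${}_2\Br(l)=0$ by Wedderburn; thus hypothesis (2) of \cref{main-thm} holds. Over $F$ itself, $X(F)\ne\emptyset$ is equivalent to $h$ being isotropic, since any simple $A$-submodule of $V$ has reduced dimension exactly $d$. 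Hence it suffices to show $X(F_v)\ne\emptyset$ for every $v\in\Omega$.

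Fix $v\in\Omega$. The index bound $\Ind(A)\mid\Ind(A_L)\cdot[L:F]$ combined with $\gcd([M:F],d)=1$ (from $[M:F]$ odd and $d$ a power of $2$) forces $\Ind(A_M)=d$, so the isotropy of $h_M$ yields $X(M)\ne\emptyset$. By \cref{ready}, $M\otimes_F F_v\simeq\prod_{i=1}^n M_i$ with $\sum_i[M_i:F_v]=[M:F]$ odd; pick $j$ with $[M_j:F_v]$ odd. Base change along $M\hookrightarrow M_j$ gives $X(M_j)\ne\emptyset$. Since $[M_j:F_v]$ is odd and $\Ind(A_{F_v})$ is also a power of $2$, the same coprimality argument yields $\Ind(A_{M_j})=\Ind(A_{F_v})=:e$, and $X(M_j)\ne\emptyset$ means $h_{M_j}$ has a totally isotropic submodule of reduced dimension $d=se$, where $s:=d/e$.

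It remains to transfer this condition from $M_j$ down to $F_v$ along the odd extension, and I expect this step to be the main obstacle, because $X_d(F_v)\ne\emptyset$ is strictly stronger than mere isotropy of $h_{F_v}$ whenever $s>1$. The $s=1$ case is exactly \cref{pre-odd}, whose residue-field hypothesis I verify by cases on $k(v)$: if $v$ comes from a closed point on the generic fiber of a regular model of $F$, then $k(v)$ is a finite extension of $K$, i.e.\ a $p$-adic local field, and \cref{local2} supplies the Springer-type property for all central division algebras with involution; if $v$ comes from a codimension-one point on the special fiber, then $k(v)$ is a global field of positive characteristic, and \cref{global2} supplies the property for algebras of period dividing $2$, which covers the residue algebras of $A$ appearing in the proof of \cref{pre-odd} since they inherit $\Per\le 2$ from the first-kind involution on $A$. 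For $s>1$ I bootstrap by induction on $s$: once $h_{F_v}$ is isotropic, decompose $h_{F_v}\simeq h'\perp\mathbb{H}$ by Witt cancellation for hermitian forms, note that $h'_{M_j}$ retains Witt index at least $s-1$, and apply the inductive hypothesis to $h'$. Together these give $X(F_v)\ne\emptyset$ for every $v\in\Omega$, and the Hasse principle of \cref{main-thm} concludes $X(F)\ne\emptyset$, i.e.\ $h$ is isotropic over $F$.
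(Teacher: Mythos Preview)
Your approach is essentially the paper's: verify $X_d(F_v)\ne\emptyset$ for all $v$ via \cref{pre-odd} (with \cref{local2}/\cref{global2} supplying the residue-field hypothesis), then invoke \cref{thm1*}. Your explicit Witt-index induction for $s>1$ is a welcome addition; the paper compresses that step into the single sentence ``By Morita equivalence and \cref{pre-odd}, $X_d(F_v)\ne\emptyset$,'' leaving the passage from isotropy to Witt index $\ge s$ implicit.

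There is, however, one genuine edge case you have not covered. Your argument begins by applying the Hasse principle to ``the projective homogeneous space $X:=X_d$,'' but $X_d$ is \emph{not} always a projective homogeneous space under $G$: per \cref{phs}, when $\ad_h$ is orthogonal one needs $n_r<n$, and $d=n$ occurs exactly when $\dim_D V=2$ (after Morita reduction to the division algebra $D$). In that situation $X_n$ breaks as $X_n^+\sqcup X_n^-$, and Hasse for each piece separately does not yield Hasse for the union. The paper sidesteps this by first splitting off the case $i_W(h_M)=m/2$: then $h_M$ is hyperbolic, and Bayer-Fluckiger--Lenstra gives $h$ hyperbolic directly, without any appeal to $X_d$. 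In the remaining case $i_W(h_M)<m/2$ one has $m\ge 3$, hence $d<n$, and $X_d$ is a bona fide projective homogeneous space. You should insert this hyperbolic/Bayer-Fluckiger--Lenstra case at the outset; the rest of your argument then goes through as written.
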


\begin{proof}
In fact, by Morita equivalence \cite[ch.~I, 9.3.5]{Knus}, we assume that $A=D$ is a central division $F$-algebra. 
Suppose that $h_M$ is isotropic. 
Let $\deg D=d$, $\dim_D(V) = m$ and $i_W(h_M)$ the Witt index of $h_M$. 
Then $1\le i_W(h_M)\le \frac{m}{2}$ and $X_d(M) \ne \emptyset$, where $X_d$ is as in \cref{sec1.5}.  

Suppose $i_W(h_M)=\frac{m}{2}$. 
Then $h_M$ is hyperbolic. 
By \cite{BL}, $h$ is hyperbolic. 
 
Suppose that  $i_W(h_M)<\frac{m}{2}$.  
Let $v\in\Omega$.  
By \cref{ready}, we have an extension $M_j/F_v$ such that $[M_j: F_v]$ is odd. 
Let $k_j$ be the residue field of $M_j$ and $k(v)$ the residue field of $F_v$. 
Since $e(M_j/F_v)f(M_j/F_v)=[M_j: F_v]$ is odd, $[k_j: k(v)] =f(M_j/F_v)$ is odd. 
Since $X_d(M) \ne \emptyset$, we have $X_d(M \otimes F_v) \ne \emptyset$. 
In particular, $X_d(M_j) \ne \emptyset$. 
Since the residue fields are either local or global (see \cite[8.1]{P14}), $[k_j: k(v)]$ is odd and $\Per(D\otimes_F F_v)|2$, 
by \cref{local2} and \cref{global2}, the conditions in \cref{pre-odd} are satisfied. 
By Morita equivalence and \cref{pre-odd}, $X_{d}(F_v)\ne\emptyset$ for all $v$. 
Finally by the Hasse principle \cref{thm1*}, $X_{d}(F)\ne\emptyset$, so $h$ is isotropic. 
\end{proof}
 

\bibliographystyle{amsalpha} 
\bibliography{wu} 
\end{document}